\Crefname{paragraph}{Section}{Sections}
\newcommand{\ensemblenombre}[1]{\mathbb{#1}}
\newcommand{\N}{\ensemblenombre{N}}
\newcommand{\R}{} % Probleme LateXML
\renewcommand{\R}{\ensemblenombre{R}}
\newcommand{\C}{\ensemblenombre{C}}
\renewcommand{\leq}{\leqslant}
\renewcommand{\geq}{\geqslant}
\newcommand{\norme}[1]{\left\lVert#1\right\rVert}
\newcommand{\dive}[1]{\mathrm{div}}
\theoremstyle{plain} 
\newtheorem{prop}{Proposition}[section] 
\newtheorem{theorem}[prop]{Theorem}
\newtheorem{lemma}[prop]{Lemma}
\newtheorem{cor}[prop]{Corollary}
\theoremstyle{definition}
\numberwithin{equation}{section}
\let\original@addcontentsline\addcontentsline
\newcommand{\dummy@addcontentsline}[3]{}
\newcommand{\DeactivateToc}{\let\addcontentsline\dummy@addcontentsline}
\newcommand{\ActivateToc}{\let\addcontentsline\original@addcontentsline}
\begin{document}

\title{Quantitative unique continuation for real-valued solutions to second order elliptic equations in the plane}
\author{Kévin Le Balc'h \footnote{Inria, Sorbonne Université, CNRS, Laboratoire Jacques-Louis Lions, Paris, France. {\tt kevin.le-balc-h@inria.fr}  }\ , Diego A. Souza  \footnote{University of Sevilla, Dpto. EDAN, Aptdo 1160, 41080 Sevilla, Spain. {\tt desouza@us.es}}
}

\maketitle
\begin{abstract}
In this article, we study a quantitative form of the Landis conjecture on exponential decay for real-valued solutions to second order elliptic equations with variable coefficients in the plane. In particular, we prove the following qualitative form of Landis conjecture, for $W_1, W_2 \in L^{\infty}(\R^2;\R^2)$, $V \in L^{\infty}(\R^2;\R)$ and $u \in H_{\mathrm{loc}}^{1}(\R^2)$ a real-valued weak solution to $-\Delta u - \nabla \cdot ( W_1 u )  +W_2 \cdot \nabla u  + V u = 0$ in $\R^2$, satisfying for $\delta>0$, $|u(x)| \leq \exp(- |x|^{1+\delta})$, $x \in \R^2$, then $u \equiv 0$. Our methodology of proof is inspired by the one recently developed by Logunov, Malinnikova, Nadirashvili, and Nazarov that have treated the equation $-\Delta u + V u = 0$ in $\R^2$. Nevertheless, several differences and additional difficulties appear. New weak quantitative maximum principles are established for the construction of a positive multiplier in a suitable perforated domain, depending on the nodal set of $u$. The resulted divergence elliptic equation is then transformed into a non-homogeneous $\partial_{\overline{z}}$ equation thanks to a generalization of Stoilow factorization theorem obtained by the theory of quasiconformal mappings, an approximate type Poincaré lemma and the use of the Cauchy transform. Finally, a suitable Carleman estimate applied to the operator $\partial_{\overline{z}}$ is the last ingredient of our proof.
\end{abstract}

\tableofcontents

\section{Introduction}

\subsection{Qualitative and quantitative unique continuation at infinity}

 In the late 1960's, see \cite{KL88}, Landis conjectured the following: for $V \in L^{\infty}(\R^N)$ and $\delta>0$,
\begin{equation}
\label{eq:landisconjecture}
				(-\Delta u + V u = 0  \text{ in }\R^N\ \text{and}\ 
				|u(x)| \leq  \exp(-|x|^{1+\delta}) \text{ in }\R^N)
		\quad \Rightarrow \quad u \equiv 0 \text{ in } \R^N.
\end{equation}
One can see \eqref{eq:landisconjecture} as a \textit{qualitative unique continuation} property at infinity. The decay rate $\exp(-|x|^{1+\delta})$ seems to be a natural barrier, by considering the function $u(x) = \exp( - C \sqrt{1+|x|^2})$ for a suitable constant $C>0$. Moreover, \eqref{eq:landisconjecture} holds when $N=1$ by an ordinary differential argument, see for instance \cite{Ros18} or \cite[Introduction]{LB21}.

Landis conjecture was first disproved by Meshkov in 1991 in the case of complex-valued potentials $V$. In fact, the work \cite{Mes91} exhibits in the plane $\R^2$ a counterexample to \eqref{eq:landisconjecture}:
\begin{equation}
\label{eq:meshkovcounterexample}
\exists V \in L^{\infty}(\R^2;\C)\ \text{and}\ u \not\equiv 0,\
				-\Delta u + V u = 0  \text{ in }\R^2\ \text{and}\ 
				|u(x)| \leq \exp(-|x|^{4/3})  \text{ in }\R^2.
\end{equation}
\cite{Mes91} also shows that this is the right scale, proving the qualitative unique continuation property at infinity: for $V \in L^{\infty}(\R^N)$ and $\delta>0$, we have
\begin{equation}
\label{eq:meshkovdecay}
(-\Delta u + V u = 0  \text{ in }\R^N\ \text{and}\ 
				|u(x)| \leq \exp(-|x|^{4/3+\delta}) \text{ in }\R^N)
		\quad \Rightarrow \quad u \equiv 0 \text{ in } \R^N.
\end{equation}
In their work on Anderson localization \cite{BK05}, Bourgain and Kenig establish a \textit{quantitative version} of Meshkov's result, that is assuming that $-\Delta u +  V u = 0$ in $\R^N$, with $\|V\|_{\infty} \leq 1$ and $\|u\|_{\infty} = |u(0)|=1$, then for $C,C'>0$ sufficiently large
\begin{equation}
\label{eq:bourgainkenig}
\sup_{B(x_0,1)} |u(x)| \geq \exp(-C R^{4/3}\log(R))\qquad\forall R \geq C',\  \forall x_0 \in \R^N\ \text{with}\ |x_0| = R.
\end{equation}

The case of real-valued potentials has been addressed in \cite{BK05} and is more  tricky. We may first ask if the qualitative Landis conjecture \eqref{eq:landisconjecture} holds for real-valued bounded potentials $V$. Then, we may wonder if the quantitative Landis conjecture holds for real-valued potentials, i.e. if \eqref{eq:bourgainkenig} holds replacing $4/3$ by $1$. The difficulty for tackling such a question comes from the fact that Carleman estimates do not seem to distinguish between real-valued and complex-valued solutions to elliptic equations.

A first breakthrough was achieved in \cite{KSW15}, regarding the quantitative unique continuation at infinity in the plane. Assuming that $-\Delta u - \nabla \cdot ( W u) +  V u = 0$ in $\R^2$ or $-\Delta u + W \cdot \nabla u +  V u = 0$ in $\R^2$, with $W \in L^{\infty}(\R^2;\R)$, $\| W \| \leq 1$, $V \in L^{\infty}(\R^2;\R)$, $0 \leq V \leq 1$ and $\|u\|_{\infty} = |u(0)|=1$, then for $C,C'>0$ sufficiently large
\begin{equation}
\label{eq:kenigsylvestrewang}
\sup_{B(x_0,1)} |u(x)| \geq \exp(-C R \log(R))\qquad\forall R \geq C',\  \forall x_0 \in \R^2\ \text{with}\ |x_0| = R.
\end{equation}
Then, subsequent papers established analogous results in the settings of variable coefficients and singular lower-order terms, \cite{KW15,Dav20,DW20}, always assuming a sign condition on the zero order term $V$.

A second breakthrough was achieved very recently in the $2$-d case in the work \cite{LMNN20} by withdrawing the sign condition on the potential $V$, proving in particular \eqref{eq:landisconjecture} in the real-valued case. More precisely, the authors prove that for $V \in L^{\infty}(\R^2;\R)$, there exists $C>0$ sufficiently large such that
\begin{equation}
\label{eq:landisconjectureLog}
(-\Delta u + V u = 0  \text{ in }\R^2\ \text{and}\ 
				|u(x)| \leq \exp(-C|x| \log^{1/2}(1+|x|)) \text{ in }\R^2)
		\quad \Rightarrow \quad u \equiv 0 \text{ in } \R^2.
\end{equation}
Actually, the authors prove the following quantitative unique continuation at infinity. Assuming that $-\Delta u + V u = 0 \in \R^2$, with $-1 \leq V \leq 1$ and $\|u\|_{\infty} = |u(0)|=1$, then for $C,C'>0$ sufficiently large
\begin{equation}
\label{eq:lmnn}
\sup_{B(x_0,1)} |u(x)| \geq \exp(-C R \log^{3/2}(R)) \qquad\forall R \geq C',\  \forall x_0 \in \R^2\ \text{with}\ |x_0| = R.
\end{equation}

Based on the new idea coming from \cite{LMNN20}, the goal of this article is to give a positive answer to the Landis conjecture \eqref{eq:landisconjecture} for real-valued solutions to elliptic equations $-\Delta u - \nabla \cdot ( W_1 u )  +W_2 \cdot \nabla u  + V u = 0$ in $\R^2$ with $W_1, W_2 \in L^{\infty}(\R^2;\R^2)$ and $V \in L^{\infty}(\R^2;\R)$ and prove a quantitative version of the Landis conjecture in the spirit of \eqref{eq:bourgainkenig}, \eqref{eq:kenigsylvestrewang}, \eqref{eq:lmnn}.

\subsection{Main results}

The first main result of this paper is the following positive answer to the qualitative Landis conjecture in the plane for real-valued solutions to second order elliptic equations.
\begin{theorem}
\label{Thm:Landis}
Let $u \in H_{\mathrm{loc}}^1(\R^2)$ be a real-valued weak solution to 
\begin{equation}
\label{eq:equationuLandis}
-\Delta u - \nabla \cdot ( W_1 u )  +W_2 \cdot \nabla u  + V u = 0\ \text{in}\ \R^2,
\end{equation}
with
\begin{equation}
\label{eq:boundlowerlandis}
W_1, W_2 \in L^{\infty}(\R^2;\R^2),\ V \in L^{\infty}(\R^2;\R).
\end{equation}
Assume that there exists $\delta >0$ such that
\begin{equation}
\label{eq:decayexpu}
|u(x)| \leq \exp(- |x|^{1+\delta})\qquad \forall x \in \R^2.
\end{equation}
Then, $u \equiv 0$.
\end{theorem}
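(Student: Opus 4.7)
The plan is to deduce the qualitative statement from a quantitative unique continuation estimate at infinity, in the spirit of \eqref{eq:lmnn}, \eqref{eq:kenigsylvestrewang}. After normalizing so that $\|W_1\|_\infty, \|W_2\|_\infty, \|V\|_\infty \leq 1$ and rescaling $u$ to $\|u\|_{L^\infty(\R^2)} = |u(0)| = 1$, I would aim to show that for some constants $C, R_0 > 0$,
\[
\sup_{B(x_0,1)} |u(x)| \geq \exp\bigl(-C R \log^{3/2}(R)\bigr), \qquad x_0 \in \R^2,\ |x_0| = R \geq R_0.
\]
Under \eqref{eq:decayexpu} one has $\sup_{B(x_0,1)} |u| \leq \exp(-c R^{1+\delta})$ for $R$ large, which contradicts the quantitative lower bound unless $u \equiv 0$. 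Thus Theorem \ref{Thm:Landis} reduces to establishing this quantitative estimate.

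The first main step, following \cite{LMNN20}, is the construction of a positive multiplier $\varphi$ on a suitable perforated domain $\Omega_R := B(0,R) \setminus \bigcup_j \overline{B(z_j, r_j)}$, where the $z_j$ are carefully chosen nodal points of $u$ and the radii $r_j$ are small, in such a way that $\varphi$ solves the adjoint-type equation $-\Delta \varphi - \nabla \cdot (W_1 \varphi) + W_2 \cdot \nabla \varphi + V \varphi = 0$ with quantitative two-sided bounds. Setting $v := u/\varphi$ on each connected component of $\Omega_R$ where $u$ has a definite sign, a direct computation shows that $v$ satisfies the divergence-form equation
\[
-\nabla \cdot (\varphi^2 \nabla v) + \varphi^2 (W_2 - W_1) \cdot \nabla v = 0
\]
on $\Omega_R$, with no zero-order term and with uniformly elliptic coefficient matrix $\varphi^2 I$. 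Building $\varphi$ requires new weak quantitative maximum principles on perforated domains accounting for both drifts $W_1$ and $W_2$, which is the main novelty relative to \cite{LMNN20}.

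The second step reduces the divergence-form equation for $v$ to a non-homogeneous Cauchy--Riemann equation. By a generalization of the Stoilow factorization theorem via the theory of planar quasiconformal mappings, the coefficients can be absorbed by a quasiconformal change of variables, bringing the equation to an almost $\overline{\partial}$-form. An approximate Poincaré lemma, needed because $\Omega_R$ is multiply connected due to the perforations, together with the Cauchy transform, then produces a function $F$ on the transformed domain satisfying
\[
\partial_{\overline{z}} F = g,
\]
with $g$ controlled by $v$ and the geometry of the perforations. Finally, a Carleman estimate for the operator $\partial_{\overline{z}}$, with radial weight tuned both to the nodal perforation pattern and to the decay assumption \eqref{eq:decayexpu}, is applied to $F$ to propagate smallness from a neighborhood of $\partial B(0,R)$ (where $u$ is exponentially small) to a fixed ball around the origin (where $|u(0)| = 1$), yielding the desired lower bound.

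The main obstacle will be the first step: producing the positive multiplier $\varphi$ and selecting the perforation data $(z_j, r_j)$ with the sharp quantitative bounds required to run the subsequent $\overline{\partial}$-Carleman scheme, in presence of both the divergence-form drift $\nabla \cdot (W_1 \cdot)$ and the gradient drift $W_2 \cdot \nabla$. The classical constructions of positive supersolutions and barriers used in \cite{LMNN20} for the pure Schrödinger case rest on maximum principle arguments that must be reworked quantitatively here, with explicit tracking of $\|W_1\|_\infty, \|W_2\|_\infty, \|V\|_\infty$, of the structure of the nodal set of $u$, and of the perforation radii. Once $\varphi$ is in hand, the reduction to $\partial_{\overline{z}}$ and the Carleman argument follow the blueprint of \cite{LMNN20}, with technical adjustments to absorb the first-order terms.
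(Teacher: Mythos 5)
Your overall reduction and three-step outline match the paper's strategy: reduce the qualitative Landis statement to a quantitative unique continuation estimate at infinity, build a positive multiplier $\varphi$ on a perforated domain via new weak maximum principles that account for both drifts, pass to a divergence-form equation for $v=u/\varphi$, use a Stoilow/quasiconformal reduction plus an approximate Poincar\'e lemma and Cauchy transform to land on a non-homogeneous $\partial_{\bar z}$ equation, and finish with a Carleman estimate for $\partial_{\bar z}$. However, there are two substantive issues.

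First, the equation you write for $v$ is not the one obtained by a direct computation, and the discrepancy matters. You claim $v$ satisfies $-\nabla\cdot(\varphi^2\nabla v)+\varphi^2(W_2-W_1)\cdot\nabla v=0$ ``with no zero-order term.'' The correct identity (Lemma~\ref{lem:divergenceperforated}) is
\begin{equation*}
-\nabla\cdot\bigl(\varphi^2\bigl(\nabla v+(W_1-W_2)\,v\bigr)\bigr)=0,
\end{equation*}
i.e.\ the drift must sit \emph{inside} the divergence. Expanding, your version omits the term $-\nabla\cdot\bigl(\varphi^2(W_1-W_2)\bigr)\,v$, which does not vanish in general. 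This is not a cosmetic point: the whole of Step~2 rests on the vector field $\varphi^2(\nabla v+(W_1-W_2)v)$ being divergence-free so that a (local, then approximate) stream function can be introduced and a Beltrami-type equation written. With your equation there is no obvious divergence-free field, and the quasiconformal/Stoilow machinery does not engage.

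Second, the quantitative target you aim for, $\sup_{B(x_0,1)}|u|\geq\exp(-CR\log^{3/2}R)$, is the pure-Schr\"odinger exponent of \cite{LMNN20} and is not what this method delivers when both drifts $W_1,W_2$ are present. The paper's local estimate \eqref{eq:observabilityu} produces an exponent involving $\|W_1\|_\infty^{1+\delta}+\|W_2\|_\infty^{1+\delta}$, which after rescaling yields only $\exp(-CR^{1+\delta})$ in \eqref{eq:observabilityuplane}, with an unavoidable loss traced back to the $W^{-1,\infty}$-source maximum principle and the quasiconformal $W^{1,p}$-estimates. This weaker estimate is still amply sufficient to contradict the decay $\exp(-|x|^{1+\delta})$ (apply it with $\delta/2$), so your reduction goes through, but you should recalibrate the target. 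Finally, a small point: normalizing $|u(0)|=1$ presupposes $u(0)\neq 0$; the paper instead works at the point $x_{\max}$ where $|u|$ attains its global maximum, which is cleaner and always available once $u\not\equiv 0$ and $|u|\to 0$ at infinity.
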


Our second main result is the following quantitative unique continuation property at infinity.
\begin{theorem}
\label{Thm:QuantitativeLandis}
Let $u \in H_{\mathrm{loc}}^1(\R^2) \cap L^{\infty}(\R^2)$ be a real-valued weak solution to 
\begin{equation}
\label{eq:equationuLandisquantitative}
-\Delta u - \nabla \cdot ( W_1 u )  +W_2 \cdot \nabla u  + V u = 0\ \text{in}\ \R^2,
\end{equation}
with
\begin{equation}
\label{eq:boundlowerlandisquantitative}
W_1, W_2 \in L^{\infty}(\R^2;\R^2),\ V \in L^{\infty}(\R^2;\R),\ \norme{W_1}_{\infty} \leq 1,\ \norme{W_2}_{\infty} \leq 1,\ \norme{V}_{\infty} \leq 1.
\end{equation}
Assume that 
\begin{equation}
\|u\|_{\infty} = |u(0)|=1.
\end{equation}
Then, for every $\delta >0$, there exist a positive constant $C=C(\delta)\geq 1$, such that
\begin{equation}
\label{eq:observabilityuplane}
\sup_{B(x_0,1)} |u(x)| \geq \exp(-C R^{1 + \delta})\qquad \forall R \geq 2,\  \forall x_0 \in \R^2\ \text{with}\ |x_0| = R.
\end{equation}
\end{theorem}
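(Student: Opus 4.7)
The plan is to adapt to the divergence-with-drift equation \eqref{eq:equationuLandisquantitative} the recent strategy of \cite{LMNN20} for pure Schr\"odinger operators. I would fix $R\geq 2$ and $x_0$ with $|x_0|=R$, work inside a ball $B(0,\Lambda R)$ for a large absolute constant $\Lambda$, and propagate the normalization $|u(0)|=1$, $\|u\|_\infty \leq 1$ toward $x_0$ through a chain of unit balls. The target is a three-ball inequality whose constant grows like $\exp(CR^{1+\delta})$, which then composes through $O(R)$ balls to yield \eqref{eq:observabilityuplane}. The freedom in $\delta$ is precisely what allows us to absorb the extra polynomial and logarithmic losses introduced by the drift terms $W_1,W_2$ (as opposed to the sharper $R\log^{3/2}R$ obtained in \cite{LMNN20} for the pure Schr\"odinger case).

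The central ingredient is the construction, on a perforated domain $\Omega = B(0,\Lambda R) \setminus \mathcal{N}$, of a positive \emph{multiplier} $\phi$ solving the formally adjoint equation
\[
-\Delta \phi + W_1 \cdot \nabla \phi - \nabla\cdot(W_2 \phi) + V \phi = 0 \ \text{in } \Omega, \qquad \phi > 0,
\]
with two-sided bounds of the form $e^{-CR^{1+\delta/2}} \leq \phi \leq e^{CR^{1+\delta/2}}$. Here $\mathcal{N}$ is a small tubular neighborhood of the nodal set $\{u=0\}$ whose one-dimensional Hausdorff measure is controlled via standard doubling/frequency estimates for solutions of \eqref{eq:equationuLandisquantitative}. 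As announced in the abstract, $\phi$ would be produced using new weak quantitative maximum principles tailored to handle the presence of both drift terms $W_1$ and $W_2$.

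Given $\phi$, the substitution $v = u/\phi$ eliminates the zero-order term and yields a divergence equation of the form $-\nabla\cdot(\phi^2 \nabla v) + \phi^2 (W_2-W_1)\cdot\nabla v = 0$ on $\Omega$. Using the generalization of Stoilow's factorization coming from quasiconformal theory, I would write $v = F\circ\Phi$ with $\Phi$ a quasiconformal homeomorphism of bounded distortion; an approximate Poincar\'e lemma adapted to the perforated domain together with the Cauchy transform then converts the equation for $F$ into a non-homogeneous Cauchy--Riemann equation of the form $\partial_{\overline z} F = g$, with $g$ controlled in a suitable $L^p$-norm by $e^{CR^{1+\delta/2}}$. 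A Carleman estimate for $\partial_{\overline z}$ with a radial weight $e^{\tau\varphi(z)}$, $\varphi(z)\approx |z|$ and $\tau\approx R^{\delta/2}$, applied to $F$ and then transferred back via $\Phi$ and $\phi$, produces after optimization in $\tau$ the desired lower bound \eqref{eq:observabilityuplane}.

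I expect the main obstacle to lie in the multiplier step: producing a positive $\phi$ with sharp two-sided bounds on a perforated domain when \emph{both} $W_1$ and $W_2$ are present. The maximum-principle arguments of \cite{LMNN20} are designed for pure Schr\"odinger operators and do not directly track drift contributions; adapting them requires new weak quantitative maximum principles whose constants depend explicitly on $\|W_1\|_\infty$, $\|W_2\|_\infty$ and on the geometry of $\mathcal{N}$, and these constants must remain compatible with the target exponent $R^{1+\delta}$. A secondary difficulty is that the Stoilow step and the approximate Poincar\'e lemma must both be carried out in a domain with many small holes dictated by the nodal set of $u$, which forces a quantitatively careful version of those classical tools.
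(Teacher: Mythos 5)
Your high-level outline (multiplier on a perforated domain, quasiconformal change of variables, Cauchy transform, Carleman estimate) is indeed the one announced in the abstract, but the crucial quantitative claim at the center of your sketch is wrong and would make the argument collapse. You ask for a positive multiplier $\phi$ with two-sided bounds of the form $e^{-CR^{1+\delta/2}} \leq \phi \leq e^{CR^{1+\delta/2}}$. The entire point of perforating the domain in the style of \cite{LMNN20} is to achieve something incomparably stronger: a multiplier satisfying $\|\phi - 1\|_{\infty} \leq C\varepsilon^{2/(2+\delta)}\|W_2\|_\infty + C\varepsilon^2\|V\|_\infty$, i.e.\ $\phi$ \emph{uniformly close to $1$}. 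This is obtained by removing the nodal set $Z$ together with a finite collection $F_\varepsilon$ of small $\varepsilon$-disks so that $Z \cup F_\varepsilon \cup \partial B_2$ is a $6C_0\varepsilon$-net, which forces the Poincar\'e constant of $\Omega_\varepsilon = B_2 \setminus (Z \cup F_\varepsilon)$ down to $O(\varepsilon)$; the weak quantitative maximum principles then give $\phi \in [1/2, 3/2]$, say. Your tubular-neighborhood-of-$Z$-with-controlled-Hausdorff-measure picture misses this: controlling $\mathcal{H}^1(Z)$ by doubling does not bound the Poincar\'e constant of the complement, which is what the construction needs. Worse, if $\phi$ only satisfied your exponential bounds, the Beltrami coefficient $\mu = \frac{1-\phi^2}{1+\phi^2}\cdot\frac{\partial_x v + i\partial_y v}{\partial_x v - i\partial_y v}$ would have $|\mu|$ arbitrarily close to $1$, so the quasiconformal map $L$ built from it would have unbounded distortion $K = (1+\sup|\mu|)/(1-\sup|\mu|)$, and the Mori-type Lipschitz control \eqref{eq:LipschitzL} on which the whole Step 2 and 3 machinery depends would be lost. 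So your multiplier step, as stated, does not feed the rest of the pipeline.

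Two further discrepancies worth flagging. First, your reduced equation $-\nabla\cdot(\phi^2\nabla v) + \phi^2(W_2-W_1)\cdot\nabla v = 0$ is not correct; the actual divergence form is $-\nabla\cdot(\phi^2(\nabla v + \hat W v)) = 0$ with $\hat W = W_1 - W_2$, where the drift enters inside the divergence, not as a first-order term. Second, the paper does not derive Theorem \ref{Thm:QuantitativeLandis} by chaining a three-ball inequality through $O(R)$ unit balls. It proves a \emph{local} vanishing-order estimate on $B_2$ (Theorem \ref{thm:landislocal}), rescales to $B_{2R}$ (Theorem \ref{thm:landislocalR}), and then observes that the normalization $\|u\|_\infty = |u(0)| = 1$ with $|x_0| = R$ gives $\|u(x_0+\cdot)\|_{L^\infty(B_{2R})} = \|u(x_0+\cdot)\|_{L^\infty(B_R)}$ for free, so the doubling hypothesis \eqref{eq:ucontrolledneartheboundaryR} holds with fixed $K$ and the local estimate applies directly with $r = 1$. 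Chaining a three-ball inequality whose constant is already $e^{CR^{1+\delta}}$ through $O(R)$ steps would overshoot to $e^{CR^{2+\delta}}$; the localization-then-rescaling route avoids this entirely.
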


\Cref{Thm:Landis} and \Cref{Thm:QuantitativeLandis} are actually based on local quantitative unique continuation properties and a scaling argument that we present in the next part.

\subsection{Local quantitative unique continuation property}
For the next, we introduce the notation $B_r = B(0,r)$ for $r>0$ and $\log_+(s) = \log(2+s)$ for $s \geq 0$.

The following result relates on the vanishing order of real-valued solutions to second order elliptic equations.
\begin{theorem}
\label{thm:landislocal}
Let $u \in H_{\mathrm{loc}}^1(B_2) \cap L^{\infty}(B_2)$ be a real-valued weak solution to 
\begin{equation}
\label{eq:equationuLandisquantitativelocal}
-\Delta u - \nabla \cdot ( W_1 u )  +W_2 \cdot \nabla u  + V u = 0\ \text{in}\ B_2,
\end{equation}
with
\begin{equation}
\label{eq:boundlowerlandisquantitativelocal}
W_1, W_2 \in L^{\infty}(B_2;\R^2),\ V \in L^{\infty}(B_2;\R).
\end{equation}
Assume that for $K \geq 2$,
\begin{equation}
\label{eq:ucontrolledneartheboundary}
\norme{u}_{L^{\infty}(B_2)} \leq e^{K} \norme{u}_{L^{\infty}(B_{1})}.
\end{equation}
Then, for every $\delta >0$, there exists a positive constant $C = C(\delta) \geq 1$ such that
\begin{equation}
\label{eq:observabilityu}
\norme{u}_{L^{\infty}(B_r)} \geq r^{C \left(\|W_1\|_{\infty}^{1 + \delta} + \|W_2\|_{\infty}^{1 + \delta} + \|V\|_{\infty}^{1/2} \log_{+}^{3/2}(\|V\|_{\infty})\right)+ C K + C } \norme{u}_{L^{\infty}(B_2)}\quad  \forall r \in (0,1/2).
\end{equation}
\end{theorem}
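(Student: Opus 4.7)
The plan is to follow the scheme developed by Logunov, Malinnikova, Nadirashvili and Nazarov for $-\Delta u+Vu=0$, adapted to incorporate the two drift terms $W_1,W_2$. The starting point is that real-valuedness allows one to exploit the nodal set $Z:=\{u=0\}$ and the associated nodal domains in $B_2$, together with the fact that in two dimensions $Z$ has a tractable one-dimensional structure away from finitely many branching points.

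The first and most delicate step is the construction of a positive multiplier $\rho$ on a suitably perforated domain $\Omega=B_2\setminus N$, where $N$ is the union of small discs around finitely many points chosen on $Z$. The function $\rho$ is chosen to solve the same equation as $u$, namely $L\rho:=-\Delta\rho-\nabla\cdot(W_1\rho)+W_2\cdot\nabla\rho+V\rho=0$, but to be strictly positive on $\Omega$ with quantitative two-sided bounds $c_1\leq\rho\leq c_2$. Because $V$ carries no sign, positivity cannot be obtained from a direct maximum principle; instead one must establish a \emph{weak quantitative maximum principle} for both $L$ and its formal adjoint $L^\ast$, whose conclusion is that once $\Omega$ is perforated at a scale comparable to the wavelength $\|V\|_\infty^{-1/2}$, tuned additionally to $\|W_1\|_\infty$ and $\|W_2\|_\infty$, the corresponding boundary value problem is solvable and the resulting $\rho$ has controlled oscillation.

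Setting $v=u/\rho$ on $\Omega$, a direct computation yields
\begin{equation*}
\nabla\cdot(\rho^2\nabla v)+\rho^2(W_1-W_2)\cdot\nabla v=0\qquad\text{in }\Omega,
\end{equation*}
a divergence-form equation with a first-order drift and no zero-order term. The two-dimensional character is now decisive: a generalized Stoilow factorization, obtained through a quasiconformal change of variable, reduces the principal part to the standard $\partial_{\overline{z}}$ operator, while the drift $W_1-W_2$ is encoded in a complex potential built via the Cauchy transform together with an approximate Poincaré lemma adapted to the non-simply-connected geometry of $\Omega$. The outcome is a function $F$ on the quasiconformal image of $\Omega$ satisfying $\partial_{\overline{z}}F=gF$, with $|F|$ comparable to $|v|$ and $\|g\|_{L^p}$ controlled explicitly by $\|W_1\|_\infty,\|W_2\|_\infty$ and the multiplier $\rho$.

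A weighted Carleman estimate for $\partial_{\overline{z}}$ with a radial weight $e^{\tau\varphi}$ absorbs the right-hand side $gF$ for $\tau$ large and, combined with the a priori bound $\|u\|_{L^{\infty}(B_2)}\leq e^{K}\|u\|_{L^{\infty}(B_1)}$, propagates smallness from $B_r$ up to $B_1$ through a chain of discs. Optimizing in $\tau$ produces the desired vanishing order $\|u\|_{L^{\infty}(B_r)}\geq r^{N}\|u\|_{L^{\infty}(B_2)}$ with $N$ as in \eqref{eq:observabilityu}: the $\|V\|_\infty^{1/2}\log_+^{3/2}(\|V\|_\infty)$ factor is dictated by the perforation scale, exactly as in the original LMNN scheme, while the $\|W_i\|_\infty^{1+\delta}$ contributions arise from absorbing the drifts into the Carleman weight, a first-order drift being only barely lower-order than $\partial_{\overline{z}}$. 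The main obstacle I expect is the construction of $\rho$ and its quantitative bounds: without the self-adjointness of $-\Delta+V$, the maximum principles for $L$ and $L^\ast$ are no longer symmetric and must be quantified separately, and tracking how the perforation scale intertwines with $\|W_1\|_\infty,\|W_2\|_\infty$ when one passes to the $\partial_{\overline{z}}$ equation is the bookkeeping that ultimately produces the precise exponent.
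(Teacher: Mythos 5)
Your overall plan matches the paper's scheme (perforation, positive multiplier, quasiconformal reduction plus Cauchy transform to a $\partial_{\overline z}$ equation, Carleman estimate), but there is a genuine error at the pivot of the argument: the choice of equation the multiplier solves. You stipulate that $\rho$ solves the \emph{same} equation as $u$, namely $-\Delta\rho-\nabla\cdot(W_1\rho)+W_2\cdot\nabla\rho+V\rho=0$. With that choice the identity you write down for $v=u/\rho$,
\begin{equation*}
\nabla\cdot(\rho^2\nabla v)+\rho^2(W_1-W_2)\cdot\nabla v=0,
\end{equation*}
is algebraically correct, but it is \emph{not} a divergence-form equation: the drift $\rho^2(W_1-W_2)\cdot\nabla v$ is not itself a divergence, so $\rho^2\nabla v$ is not divergence-free. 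Consequently there is no stream function for the flux, and the entire Beltrami/Stoilow/approximate-Poincar\'e mechanism that you invoke in the next step has no vector field to act on. What the paper does is construct the multiplier $\varphi$ as a positive solution of the \emph{formal adjoint} $-\Delta\varphi-\nabla\cdot(W_2\varphi)+W_1\cdot\nabla\varphi+V\varphi=0$ (note the swap of $W_1$ and $W_2$). Only with this choice does the quotient $v=u/\varphi$ satisfy the genuine divergence-form equation $-\nabla\cdot\big(\varphi^2(\nabla v+\hat W v)\big)=0$ with $\hat W=W_1-W_2$, and then $\varphi^2(\nabla v+\hat W v)$ is divergence-free, which is exactly what makes the two-dimensional stream function construction and the subsequent $\partial_{\overline z}$ reduction possible. (The paper does also use a positive solution of the same equation as $u$, but only to verify the nodal-set property \eqref{eq:Pepsilon}, not as the multiplier defining $v$.) In short: the claim that the multiplier solves $L\rho=0$ is wrong, and the proof cannot proceed past that point as written.

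Two smaller inaccuracies worth flagging. First, you attribute the $\|V\|_\infty^{1/2}\log_+^{3/2}(\|V\|_\infty)$ factor entirely to the perforation scale, saying this is ``exactly as in the original LMNN scheme''; in fact LMNN's local estimate has the sharper $\|V\|_\infty^{1/2}\log_+^{1/2}(\|V\|_\infty)$, and the extra logarithm here enters through the quasiconformal change of variables (the constraint \eqref{eq:secondvarepsilon} carries a $\log(2/\varepsilon)$ because of Mori's theorem and the bound on $K-1$). Second, the $\|W_i\|_\infty^{1+\delta}$ contributions do not come from ``absorbing the drifts into the Carleman weight''; they arise already in Step~1 from the weak maximum principle with $W^{-1,\infty}$-source (the Sobolev exponent $p=2+\delta$ produces an $\varepsilon^{2/(2+\delta)}$ loss), and reappear through the H\"older exponent $Q=2/(2+\delta)$ of the Cauchy transform in Step~2. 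Neither is fatal, but they misattribute where the exponent loss actually happens.
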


The rescaled version of \Cref{thm:landislocal} is the following result.
\begin{theorem}
\label{thm:landislocalR}
Let $R \geq 2$. Let $u \in H_{\mathrm{loc}}^1(B_{2R}) \cap L^{\infty}(B_{2R})$ be a real-valued weak solution to 
\begin{equation}
\label{eq:equationuLandisquantitativeR}
-\Delta u - \nabla \cdot ( W_1 u )  +W_2 \cdot \nabla u  + V u = 0\ \text{in}\ B_{2R},
\end{equation}
with
\begin{equation}
\label{eq:boundlowerlandisquantitativeR}
W_1, W_2 \in L^{\infty}(B_{2R};\R^2),\ V \in L^{\infty}(B_{2R};\R),\ \norme{W_1}_{\infty} \leq 1,\ \norme{W_2}_{\infty} \leq 1,\ \norme{V}_{\infty} \leq 1.
\end{equation}
Assume that for $K \geq 2$,
\begin{equation}
\label{eq:ucontrolledneartheboundaryR}
\norme{u}_{L^{\infty}(B_{2R})} \leq e^{K} \norme{u}_{L^{\infty}(B_{R})}.
\end{equation}
Then, for every $\delta >0$, there exists a positive constant $C = C(\delta) \geq 1$ such that
\begin{equation}
\label{eq:observabilityuR}
\norme{u}_{L^{\infty}(B_{r})} \geq \left(r/R\right)^{C R^{1+\delta} + C K }  \norme{u}_{L^{\infty}(B_{2R})}\qquad \forall r \in (0,R/2).
\end{equation}
\end{theorem}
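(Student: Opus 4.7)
The plan is to derive \Cref{thm:landislocalR} from \Cref{thm:landislocal} by a direct scaling argument. I introduce the rescaled function $v(y) := u(Ry)$ for $y \in B_2$, and check via the weak formulation that $v$ is a weak solution in $B_2$ of
\[
-\Delta v - \nabla \cdot (\widetilde{W}_1 v) + \widetilde{W}_2 \cdot \nabla v + \widetilde{V} v = 0,
\]
where $\widetilde{W}_1(y) = R\, W_1(Ry)$, $\widetilde{W}_2(y) = R\, W_2(Ry)$, and $\widetilde{V}(y) = R^2\, V(Ry)$. This is a routine change of variables performed against test functions $\psi \in C_c^\infty(B_2)$ with $\varphi(x) = \psi(x/R)$: the factor $R^2$ from the Jacobian $dx = R^2\, dy$ combined with the factors $R^{-1}$ from $\nabla_x = R^{-1}\nabla_y$ produces exactly one extra $R$ on the first-order terms and $R^2$ on the potential.

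Using \eqref{eq:boundlowerlandisquantitativeR}, the rescaled coefficients then satisfy $\|\widetilde{W}_1\|_\infty \leq R$, $\|\widetilde{W}_2\|_\infty \leq R$, $\|\widetilde{V}\|_\infty \leq R^2$, while the hypothesis \eqref{eq:ucontrolledneartheboundaryR} translates to $\|v\|_{L^\infty(B_2)} \leq e^K \|v\|_{L^\infty(B_1)}$ with the same constant $K \geq 2$. Now fix $\delta > 0$ and set $\delta' = \delta/2$. Applying \Cref{thm:landislocal} to $v$ with parameter $\delta'$ yields, for every $\rho \in (0, 1/2)$,
\[
\|v\|_{L^\infty(B_\rho)} \geq \rho^{C\bigl(2 R^{1+\delta'} + R\, \log_+^{3/2}(R^2)\bigr) + C K + C} \|v\|_{L^\infty(B_2)}.
\]

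Since $R \geq 2$, one has $R\, \log_+^{3/2}(R^2) \leq C_\delta R^{1+\delta}$, $R^{1+\delta'} \leq R^{1+\delta}$, and $1 \leq R^{1+\delta}$, so the whole exponent is bounded above by $C' R^{1+\delta} + C' K$ after enlarging the constant. Finally I choose $\rho = r/R \in (0, 1/2)$, which is allowed thanks to $r \in (0, R/2)$; the identities $\|v\|_{L^\infty(B_\rho)} = \|u\|_{L^\infty(B_r)}$ and $\|v\|_{L^\infty(B_2)} = \|u\|_{L^\infty(B_{2R})}$ yield precisely \eqref{eq:observabilityuR}. I do not anticipate a genuine obstacle here: the proof is essentially a bookkeeping exercise. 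The only points deserving some care are the correct scaling of the divergence-form term $\nabla\cdot(W_1 u)$ in the weak sense and the verification that the logarithmic contribution coming from the effective potential $\widetilde{V}$ of size $R^2$ is indeed dominated by $R^{1+\delta}$ for arbitrary $\delta > 0$.
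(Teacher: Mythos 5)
Your proof is correct and follows essentially the same scaling argument as the paper: rescale $u$ to $v(\cdot)=u(R\cdot)$, apply \Cref{thm:landislocal} with a smaller $\delta'=\delta/2$, bound the resulting exponent by $CR^{1+\delta}+CK$ (absorbing $R\log_+^{3/2}(R^2)$ into $R^{1+\delta}$ for $R\geq 2$), and undo the rescaling. The paper's version is terser but relies on precisely the same bookkeeping that you make explicit.
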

\medskip
The end of this part consists in proving the following sequence of implications:
\begin{equation}
\label{eq:implicationtheorem}
\text{\Cref{thm:landislocal}} \Rightarrow  \text{\Cref{thm:landislocalR}} \Rightarrow \text{\Cref{Thm:Landis}}\ \text{and}\ \text{\Cref{Thm:QuantitativeLandis}}.
\end{equation} 

\begin{proof}[Proof of \Cref{thm:landislocalR} from \Cref{thm:landislocal}]
We apply \Cref{thm:landislocal} to $u_R(\cdot) = u(R\cdot)$ that solves \eqref{eq:equationuLandisquantitativelocal} with $W_{1,R} = R W_1(R \cdot)\in L^{\infty}(B_2;\R^2),\ W_{2,R} = R W_2(R \cdot)\in L^{\infty}(B_2;\R^2)\ \text{and}\ V_R = R^2 V(R \cdot) \in L^{\infty}(B_2;\R).$
Remark that 
$$\norme{W_{1,R}}_{\infty} \leq R,\ \norme{W_{2,R}}_{\infty} \leq R,\ \text{and}\ \norme{V}_{\infty} \leq R^2,$$ 
so for every $r \in (0,R/2)$, that is $(r/R) \in (0,1/2)$, we have
\begin{equation*}
\norme{u}_{L^{\infty}(B_r}= \norme{u_R}_{L^{\infty}(B_{r/R})} \geq (r/R)^{C R^{1 + \delta} + C K} \norme{u_R}_{L^{\infty}(B_2)} 
\geq (r/R)^{C R^{1 + \delta} + C K } \norme{u}_{L^{\infty}(B_{2R})},
\end{equation*}
leading to the expected inequality \eqref{eq:observabilityuR}.
\end{proof}
We now prove \Cref{Thm:Landis} and \Cref{Thm:QuantitativeLandis} from \Cref{thm:landislocalR}.
\begin{proof}[Proof of \Cref{Thm:Landis} from \Cref{thm:landislocalR}]
Replacing $u$ by $u_{\lambda}(\cdot) = u(\lambda \cdot)$ for $\lambda >0$ small enough, one can assume that $\norme{W_1}_{\infty} \leq 1,\ \norme{W_2}_{\infty} \leq 1,\ \norme{V}_{\infty} \leq 1$. We then argue by contradiction, assuming that $u_{\lambda}$ is not identically equal to $0$. By using that $|u_{\lambda}|$ tends to $0$ near infinity, we have that $|u_{\lambda}|$ attains its global maximum at some point $x_{\max}$ on the plane. Then, for any $R \geq 2 |x_{\max}| + 2 $ and any $x$ with $|x| = R/2$, we have
$$ \sup_{B(x,2R)} |u_{\lambda}|= \sup_{B(x,R)} |u_{\lambda}|,$$
and additionally by applying \Cref{thm:landislocalR} to $u_{\lambda}(x+\cdot)$ with $\delta/2$, we have for $C \geq 1$,
$$ \sup_{B(x,R/4)} |u_{\lambda}| \geq \exp(-C R^{1+\delta/2} ),$$
leading to a contradiction with the decaying assumption \eqref{eq:decayexpu}.
\end{proof}
\begin{proof}[Proof of \Cref{Thm:QuantitativeLandis} from \Cref{thm:landislocalR}]
Take $x_0 \in \R^2$ such that $|x_0| = R$, then from the assumption, $\|u\|_{\infty} = |u(0)|=1$ we have $$\norme{u(x_0+\cdot)}_{L^{\infty}(B_{2R})}= \norme{u(x_0+\cdot)}_{L^{\infty}(B_R)},$$ so one can apply \eqref{eq:observabilityuR} to the function $u(x_0+\cdot)$ with $r=1 \leq R/2$ to get 
$$ \norme{u(x_0+\cdot)}_{L^{\infty}(B_1)} \geq (1/R)^{C R^{1 + \delta/2}} \geq \exp(-C R^{1+ \delta/2} \log(R) ) \geq \exp(-C R^{1+ \delta}),$$
so \eqref{eq:observabilityuplane} holds.
\end{proof}

%Another local version that directly comes from a scaling argument is the following one.
%\begin{theorem}
%Let $R>1$ be a large constant and let $u$ be a real-valued solution to $-\Delta u + W \cdot \nabla u + V u = 0$ in $B(0,1)$ with $W \in L^{\infty}(B(0,2R);\R^2)$, $\norme{W}_{\infty} \leq 1$ and $V \in L^{\infty}(B(0,2R) ;\R)$, $\norme{V}_{\infty} \leq 1$. Then, for some 
%\begin{equation}
%\label{eq:ucontrolledneartheboundary}
%\norme{u}_{L^{\infty}(B(0,2R))} \leq C \norme{u}_{L^{\infty}(B(0,R))}.
%\end{equation}
%Then, we have 
%\begin{equation}
%\label{eq:observabilityu}
%\norme{u}_{L^{\infty}(B(0,r))} \geq (r/R)^{C R^{1+\delta} } \norme{u}_{L^{\infty}(B(0,R))}\ \forall 0 < r < 1.
%\end{equation}
%\end{theorem}
%\begin{proof}
%Assume that the first local theorem holds. Then, take $u_R(x) = u(Rx)$ that solves $-\Delta u_R + W_R \cdot \nabla u_R + V_R u = 0$ in $B(0,2)$ with $W_R = R W(R \cdot) \in L^{\infty}(B(0,2);\R^2)$ and $V = R^2 V(R \cdot) \in L^{\infty}(B(0,2);\R)$. Apply the first local theorem to get the result.
%\end{proof}

\subsection{Strategy of the proof of the main local result \Cref{thm:landislocal}}
\label{sec:strategyofproof}

\textbf{Notation and parameters.} In the following and in the whole paper, $C,C'\geq 1$ denote various positive large numerical constants, $c,c'>0$ denote various positive small numerical constants and $\varepsilon>0$ is a free sufficiently small parameter that would be chosen depending on $\norme{W_1}_{\infty}$, $\norme{W_2}_{\infty}$,   $\norme{V}_{\infty}$, see below. During the proof, we need to adjust or to fix some constants or parameters, this would be precisely indicated in paragraphs called “\textbf{Setting of parameters}”.\\

In this part, we present the strategy of the proof of \Cref{thm:landislocal} and the main arguments of each step. 
%We will first focus on the equation $-\Delta u - \nabla \cdot( W u) + V u = 0$. 
This strategy actually follows the approach of \cite{LMNN20}. We will explain at the end of this section the new difficulties in comparison to \cite{LMNN20}. The proof of \Cref{thm:landislocal} is divided into three main steps.

\begin{itemize}
\item \textbf{Step 1: Construction of a positive multiplier $\varphi$ in a suitable perforated domain.}
We first introduce the set of zeros of $u$, called the \textit{nodal set} of $u$,
\begin{equation}
Z := \{x \in B_2\ ;\ u(x) = 0\}.
\end{equation}
In this step, we shall first prove that $Z$ satisfies the following fundamental property
\begin{equation}
\label{eq:Pepsilon}
\forall x_0 \in Z,\ \forall \rho \in (0,\varepsilon),\ \partial B(x_0,\rho) \cap (Z \cup \partial B(0,2)) \neq \emptyset, 
\tag{P-$\varepsilon$}
\end{equation}
for 
\begin{equation}
\label{eq:firstvarepsilon}
\varepsilon \leq c +  c \|W_1\|_{\infty}^{-1-\delta/2} + c \|W_2\|_{\infty}^{-1-\delta/2} +  c \|V\|_{\infty}^{-1/2}.
\end{equation}
The next point consists in \textit{perforating the domain} $B(0,2)$ using sufficiently small disks (of radius $\varepsilon$) in a sufficiently large number whose union is denoted by $F_{\varepsilon}$, avoiding $Z$, $\partial B(0,2)$, $0$ and $x_{\max}$, the point at which $|u|$ is maximal in $B_{1}$. The resulting perforated domain $\Omega_{\varepsilon} = B_2 \setminus (Z \cup F_{\varepsilon})$ has a \textit{small Poincaré constant} of the form $C' \varepsilon$ so one can construct a positive solution $\varphi \in H^1(\Omega_{\varepsilon})$ 
\begin{equation}
		\label{Eq-varphiOepsProof}
				-\Delta \varphi - \nabla \cdot (W_2  \varphi) + W_1 \cdot \nabla \varphi + V \varphi = 0\ \text{ in } \Omega_{\varepsilon}, 	
	\end{equation}
and
\begin{equation}
\label{eq:boundmultiplier}
\varphi-1 \in H_0^1(\Omega_{\varepsilon}),\ \norme{\varphi -1}_{L^{\infty}(\Omega_{\varepsilon})} \leq C \varepsilon^{2/(2+\delta) } \norme{W_2}_{\infty} +  C \varepsilon^2 \norme{V}_{\infty}.
\end{equation}
In the following, we will call this solution $\varphi$ a \emph{multiplier}. Note that for the construction of the multiplier, $\varepsilon$ is still of the form \eqref{eq:firstvarepsilon}.

\item \textbf{Step 2: Reduction to a non-homogeneous $\partial_{\overline{z}}$ equation.} Thanks to the positive multiplier of the previous step, we first reduce the elliptic equation $-\Delta u - \nabla \cdot ( W_1 u )  +W_2 \cdot \nabla u  + V u = 0$ to a \textit{divergence type elliptic equation} satisfied by $v= u /\varphi$,
\begin{equation}
\label{eq:divergencellipticinsteps}
-\nabla \cdot( \varphi^2 (\nabla v +  \hat{W} v)) = 0\ \text{in}\ \Omega_{\varepsilon}' = B_2 \setminus F_{\varepsilon},\ \text{with}\  \hat{W} = W_1 - W_2.
\end{equation}
Note that the divergence elliptic equation is satisfied in the weak sense, through the nodal set of $u$. We then apply the \textit{theory of quasiconformal mappings} to find $L : B_2 \to B_2$, a quasiconformal mapping, to recast the divergence elliptic equation satisfied by $h = v \circ L^{-1}$,
\begin{multline}
\label{eq:equationhproofstrategy}
-\Delta h - \nabla \cdot ( \tilde{W} h) = 0\ \text{in}\ L(\Omega_{\varepsilon}'),\\ \text{with}\ \tilde{W} = \overline{\partial_{z} L^{-1}} \cdot \overset{\diamond}{W}\circ L^{-1},\ \|\overset{\diamond}{W}\|_{\infty} \leq \norme{W_1}_{\infty} + \norme{W_2}_{\infty}.
\end{multline}
The next point of this step consists in controlling how the quasiconformal change of variable, denoted by $L$ transforms $\Omega_{\varepsilon}'$ to another perforated domain. In particular, the holes, which were disks before, will be transformed into holes which still cannot be too flattened by this quasiconformal transform. Moreover, local $W^{1,p}$-estimates on $L^{-1}$ are also established to handle the extra term $\partial_z L^{-1}$ appearing in the definition of $\tilde{W}$. For this step, $\varepsilon$ has to be chosen such that 
\begin{equation}
\label{eq:secondvarepsilon}
\varepsilon \leq c + c \|W_2\|_{\infty}^{-1-\delta/2}\log_{+}^{-1-\delta/2} (\|W_2\|_{\infty}) + c \|V\|_{\infty}^{-1/2} \log_{+}^{-1/2}(\|V\|_{\infty}).
\end{equation}

We then introduce an \textit{approximate stream function} to the previous divergence free vector, i.e. $\nabla h + \tilde{W} h$, then use the \emph{Cauchy transform} that enables to recast the previous elliptic equation into a \textit{non-homogeneous reduced Beltrami equation}
\begin{equation}
\label{eq:equationdzbarstep2}
\partial_{\overline{z}} \zeta = F\ \text{in}\ B_2,
\end{equation}
where $F$ is a source term depending on the values of $v, \nabla v$ near the disks of the perforated domain $L(\Omega_{\varepsilon}')$. Note that at the end of this step, $\varepsilon$ is now fixed, satisfying both \eqref{eq:firstvarepsilon} and \eqref{eq:secondvarepsilon} then
\begin{equation}
	\label{eq:epsPetitPoincareGeneralFinStep2ProofStrategy}
	\varepsilon \leq c + c \|W_1\|_{\infty}^{-1-\delta/2} + c \|W_2\|_{\infty}^{-1-\delta/2} \log_{+}^{-1-\delta/2} (\|W_2\|_{\infty}) +  c \|V\|_{\infty}^{-1/2} \log_{+}^{-1/2}(\|V\|_{\infty}).
\end{equation}

\item \textbf{Step 3: A Carleman estimate to $\partial_{\overline{z}}$.} We now employ a \emph{Carleman estimate} in $B(0,2)$ to cut-off version of $\zeta$, called $y$, which vanishes in a small neighbourhood of $\partial B(0,2)$, in a $r'$-neighbourhood of $B(0,r'/2)$ where $B(0,r') \subset L(B(0,r))$, 
\begin{equation}
\label{eq:CarlemanyPresentation}
 \int_{B_2} |y|^2  e^{-2 s \log(|z|) + 2 |z|^2} dz \leq C \int_{B_2}|\partial_{\overline{z}}y|^2  e^{-2 s \log(|z|) + 2 |z|^2} dz\qquad \forall s \geq 1.
\end{equation}
By using \emph{Harnack inequalities}, the source term $F$ is then absorbed by taking the parameter $s$ in the Carleman estimate such that 
\begin{equation}
\label{eq:sfonctionepsilon}
s \geq C \varepsilon^{-1} \log(C \varepsilon^{-1}),
\end{equation}
so according to \eqref{eq:epsPetitPoincareGeneralFinStep2ProofStrategy} the following choice of $s$ is convenient
\begin{equation}
\label{eq:sparametersepsilon}
s \geq C \left(\|W_1\|_{\infty}^{1 + \delta} + \|W_2\|_{\infty}^{1 + \delta} + \|V\|_{\infty}^{1/2} \log_{+}^{3/2}(\|V\|_{\infty})\right) + C.
\end{equation}
The cut-off terms near $\partial B(0,2)$ are absorbed by using \eqref{eq:ucontrolledneartheboundary} and by recalling that the perforation process in Step 1 avoids the point $x_{\max}$, here $s$ has to be taken such that 
\begin{equation}
\label{eq:sfonctionK}
s \geq  C \left(\|W_1\|_{\infty}^{1 + \delta} + \|W_2\|_{\infty}^{1 + \delta} + \|V\|_{\infty}^{1/2} \log_{+}^{3/2}(\|V\|_{\infty})\right) + C K + C.
\end{equation}
The cut-off term near $B(0,r'/2)$ will be our observation term, i.e. the left hand side of \eqref{eq:observabilityu} recalling that $r' = c r^2$ if $r \leq \varepsilon$ or $r' = cr$ if $r > \varepsilon$. This combination of arguments leads to the expected quantitative unique continuation estimate for $u$, i.e. \eqref{eq:observabilityu}. 

\end{itemize}

Steps 1, 2 and 3 are crucially inspired by the methodology in \cite{LMNN20} that focuses on the case of the elliptic equation $-\Delta u+ Vu = 0$. Still, our strategy differs from the one in \cite{LMNN20} in several points. \\

\textit{Differences of Step 1 in comparison to \cite[Act 1]{LMNN20}.} The main difference is the presence of the drift terms $W_1, W_2$. 

We first prove a weak quantitative maximum principle for $\Phi \in H_0^1(\Omega)$ satisfying $-\Delta \Phi + W \cdot \nabla \Phi = f$ with $f \in L^{\infty}(\Omega)$, where $\Omega$ is a bounded open set with small Poincaré constant, see \Cref{lem:varphiTilde} below. This is a generalization of the weak quantitative maximum principle \cite[Lemma 6.10]{LMNN20} for the Laplace equation $-\Delta \Phi = f$. In \cite[Section 6.3]{LMNN20}, the authors use De Giorgi method conjugated with the fact that $\Phi$ is the minimizer of the functional $F(\Phi) = \int_{\Omega} |\nabla \Phi|^2 -  \int_{\Omega} f \Phi$ to establish \cite[Lemma 6.10]{LMNN20}. Here, because the operator $-\Delta + W \cdot \nabla$ is not symmetric, we have to proceed in another way. We instead implement De Giorgi method in the associated variational formulation of the elliptic equation $-\Delta \Phi + W \cdot \nabla \Phi = f$.

We then prove a weak quantitative maximum principle $\Phi \in H_0^1(\Omega)$ satisfying $-\Delta \Phi + W \cdot \nabla \Phi = \nabla \cdot g$ with $g \in L^{\infty}(\Omega)$, where $\Omega$ is a bounded open set with small Poincaré constant, see \Cref{lem:varphiTildeDivergenceSource} below. This part is new in comparison to \cite{LMNN20}. For establishing such a result, we first derive precise Sobolev's inequalities, quantified in function of the Poincaré constant, see \Cref{lem:sobolevinequalities} below. This enables us to follow Stampacchia's iterative strategy for the obtaining of the $L^{\infty}$-bound. It is worth mentioning that the $L^{\infty}$-bound coming from the maximum principle depends on the measure of the open set, that is not the case of the previous maximum principle.

On the one hand, the maximum principles with both $L^{\infty}$ and $W^{-1, \infty}$-source terms are useful for proving that the nodal set of $u$ satisfies \eqref{eq:Pepsilon}, by constructing appropriative positive functions $\varphi$ to $-\Delta \varphi - \nabla \cdot (W_1  \varphi) + W_2 \cdot \nabla \varphi + V \varphi=0$, see \Cref{lem:varphidiv} and \Cref{lem:nodalsetu} below. On the other hand, the maximum principles with both $L^{\infty}$ and $W^{-1, \infty}$-source terms lead to the construction of positive multiplier $\varphi \in H^1(\Omega)$ satisfying $-\Delta \varphi - \nabla \cdot (W_2  \varphi) + W_1 \cdot \nabla \varphi + V \varphi=0$ where $\Omega$ is a bounded open set with small Poincaré constant, that is analogous to \cite[Lemma 3.2]{LMNN20}, see \Cref{lem:varphidiv} and  \Cref{prop:constructionmultiplier} below. \\

%This last result would be applied to $\Omega_{\varepsilon}$ that has a small Poincaré constant. To put in a nutshell, because the operator $P \Phi = -\Delta \Phi - \nabla \cdot(W_1 \Phi) + W_2 \cdot \nabla \Phi + V\Phi$ is not symmetric, we need first to derive quantitative maximum principles for the equation $P \varphi=0$ to establish the fundamental property \eqref{eq:Pepsilon} of the nodal set of $u$, then we need to obtain  quantitative maximum principles for the adjoint equation $P^{*} \varphi=0$ for the construction of the multiplier. \\

\textit{Differences of Step 2 in comparison to \cite[Act 2]{LMNN20}.}
The philosophy of this step is the same as \cite[Act 2]{LMNN20}, we try to reduce the divergence elliptic equation  $-\nabla \cdot( \varphi^2 (\nabla v + \hat{W} v)) = 0$ in $\Omega_{\varepsilon}'$ to a very simple elliptic equation by using a quasi-conformal change of variable. But here the strategy is rather different. 

The problem is that one cannot reduce it to a simple harmonic equation because of the presence of the drift term $\hat{W}$. By working as in \cite[Act 2]{LMNN20}, one can define a local stream $\tilde{v}$ function, in an arbitrary ball $B$ contained in $\Omega_{\varepsilon}'$, by using Poincaré lemma. Then, the function $w = v + i \tilde{v}$ satisfies a Beltrami equation 
$$\partial_{\overline{z}} w = \mu \partial_{z} w - \overset{\diamond}{W} w\ \text{in}\ B \subset \Omega_{\varepsilon}',$$ 
with $$|\mu| \leq C \varepsilon^{2/(2+\delta) } \norme{W_2}_{\infty}+  C \varepsilon^2 \|V\|_{\infty}\ \text{and}\ \|\overset{\diamond}{W}\|_{\infty} \leq C \|W_1\|_{\infty} + C \|W_2\|_{\infty}.$$ One can then introduce a quasiconformal mapping $L : B_2 \to B_2$, satisfying $\partial_{\overline{z}} L = \mu \partial_z L$. An adaptation of the Stoilow factorization theorem then leads to the equation \eqref{eq:equationhproofstrategy} satisfied by $h = v \circ L^{-1}$, see \Cref{lem:harmonich} below. 

One needs to reduce a bit more because of the presence of the new drift term $\tilde{W}$. In order to do this, we first establish that $L^{-1} \in W_{\mathrm{loc}}^{1,p}(B_2)$ for every $1 \leq p < +\infty$ so that for $c_0>0$, $\|\tilde{W}\|_{L^{p}(B_{2-c_0})} \leq C \|\overset{\diamond}{W}\|_{\infty}$, see \Cref{lem:imagequasiconformal} below. We then introduce a cut-off $\chi$ near the images of the disks of the perforated domain, and near the boundary of $B_2$ to recast \eqref{eq:equationhproofstrategy} into a divergence elliptic equation, with a non-homogeneous source term
\begin{equation}
\label{eq:equationchivproofstrategy}
-\nabla \cdot(\chi  (\nabla h + \tilde{W} h)) = - \nabla \chi \cdot (\nabla  h + \tilde{W} h) \ \text{in}\ B_2,
\end{equation}
see \Cref{lem:divergencepcutoff} below. The advantage of such a procedure is that we are now working in the simply connected domain $B_2$ and one can then deduce an approximate type Poincaré lemma to define $\tilde{h}$ where 
\begin{equation}
\chi  (\nabla h + \tilde{W} h) = \mathrm{curl}(\tilde{h}) + \text{error term} \ \text{in}\ B_2,
\end{equation}
see \Cref{lem:approximatestreampolar} below. One can compare this procedure to the one employed in \cite[Section 5]{KSW15} for proving the Landis conjecture in an exterior domain. Now, one can observe that $\gamma = \chi h + i \tilde{h}$ satisfies the following Beltrami equation
\begin{equation}
\label{eq:equationgammaproof}
\partial_{\overline{z}} \gamma  = \alpha \gamma  + \text{error term} \ \text{in}\ B_2,\ \norme{\alpha}_{L^p(B_2)} \leq C \|\tilde{W}\|_{L^{p}(B_{2-c})},
\end{equation}
see \Cref{lem:beltramifirstequation} below. We finally withdraw the zero order term with the use of the Cauchy transform, i.e. defining $\zeta = \exp(-\beta) \gamma$ with $\partial_{\overline{z}} \beta  = \alpha$, we have
\begin{equation}
\label{eq:equationzetaproof}
\partial_{\overline{z}} \zeta = \text{error term} \ \text{in}\ B_2,
\end{equation}
see \Cref{lem:beltramiequationzeta} below. It is worth mentioning that during each step the error term is changing but at the end of this step, it has the following form
\begin{equation}
\text{error term} = \exp(-\beta)\left[\text{local term} + \text{non local term}\right],
\end{equation}
where both local term and non-local term involve the values of $h, \nabla h$ near the images of the disk of the perforation. The particularity of the non-local term is contained in the fact that it is local in the angular variable while it is non-local in the radial variable. This is due to the fact that the approximate stream function has been introduced with respect to polar coordinates in prevision of the next step. We also formulate well-known properties for the Cauchy transform, i.e. $L^{\infty}$ bound on $\beta$ and Hölder's estimate on $\beta$ in function of the $L^p$ bound on $\alpha$, see \Cref{lem:defomegaandT} below.\\

\textit{Differences of Step 3 in comparison to \cite[Act 3]{LMNN20}.} Here, we do not follow \cite[Act 3]{LMNN20} because we have not reduced our equation to a harmonic equation. Our strategy takes its inspiration in \cite[Section 6.1]{LMNN20} that use Carleman estimates for the Laplacian in a perforated domain and \cite[Section 5]{KSW15} that use Carleman estimates for the $\partial_{\overline{z}}$-operator in an exterior domain.

We apply a Carleman estimate to a cut-off version of $\zeta$ near the boundary $\partial B_2$ and near the observation set $B_{r'/2}$, that satisfies a non homogeneous $\partial_{\overline{z}}$ equation. Then, one needs to absorb the source terms, involving local and nonlocal terms depending on the values of $h$, $\nabla h$ near the disks of the perforation. The local term can be absorbed, by using Harnack inequality on $u$ that transfers into Harnack inequality on $h$ at scale $\varepsilon$, precise Hölder's estimates on $\beta$ and the properties of the Carleman weight, see \Cref{lem:estimationlocCarleman} below. The nonlocal term is more difficult to absorb, it can be treated by using the same previous arguments conjugating with the key point that the non-local variable is only radial and the fact that the Carleman weight is a radial function, see \Cref{lem:estimationnonlocCarleman} below. This is why we have introduced the approximate stream function in polar coordinates instead of the more usual Cartesian coordinates.

In order to come back to the original variable, i.e. to obtain an estimate of $|u(x_{\max})|$, we deduce from the Carleman estimate a $W_{\mathrm{loc}}^{1,2}$ estimate of $\zeta$ by using the $H^1$ regularity of the operator $\partial_{\overline{z}}$, then by Sobolev embedding a $L_{\mathrm{loc}}^p$ estimate of $\zeta$ for every $1 \leq p < +\infty$. But this is not sufficient for our purpose. This is why, we add the extra remark telling that one can actually obtain a $L_{\mathrm{loc}}^p$ estimate of $\partial_{\overline{z}}\zeta$ for every $1 \leq p < +\infty$ by using the same strategy leading to the absorption of the local and nonlocal terms, see \Cref{lem:importantcarleman} below. Therefore, we can deduce a $W_{\mathrm{loc}}^{1,p}$ estimate of $\zeta$ for $p>2$ then a $L_{\mathrm{loc}}^{\infty}$ bound on $\zeta$, giving the bound on $|u(x_{\max})|$, leading to the observability estimate \eqref{eq:observabilityu}.

\subsection{Organization of the paper}

In \Cref{sec:sectionmultiplier}, we present the Step 1 of the proof of the main local result \Cref{thm:landislocal}. In \Cref{sec:reductiondzbar}, we present the Step 2 of the proof of the main local result \Cref{thm:landislocal}. In \Cref{sec:carlemanpart}, we present the Step 3 of the proof of the main local result \Cref{thm:landislocal}. Finally, in \Cref{sec:finalcomments}, we address final comments concerning this work.

\bigskip

{\noindent \bf Acknowledgements.} The authors are indebted to Sylvain Ervedoza and Enrique Fern\'andez-Cara for interesting discussions about this work. The authors thank Laboratoire Jacques-Louis Lions, Departamento de Ecuaciones Diferenciales y Análisis Numérico and Instituto de Matemáticas de la  Universidad de Sevilla where part of this work was done. The first author is partially supported by the Project TRECOS ANR-20-CE40-0009 funded by the ANR (2021--2024). The second author is partially supported by Grant PID2020–114976GB–I00, funded by MCIN/AEI/10.13039/501100011033.

\section{Step 1: Construction of a positive multiplier in a suitable perforated domain}
\label{sec:sectionmultiplier}

The main goal of this step is to construct a positive multiplier of the following equation $-\Delta \varphi - \nabla \cdot (W_2  \varphi) + W_1 \cdot \nabla \varphi + V \varphi = 0$. As explained in \Cref{sec:strategyofproof}, such a construction would be made possible by perforating the domain $B_2$ in a suitable way to reduce the Poincaré constant. Indeed, this will allow us to apply weak maximum principles, quantified in function of the Poincaré constant and the parameters of the elliptic operator, to prove the existence of such a function $\varphi$.

\subsection{Weak quantitative maximum principles}

The goal of this first part is to prove maximum principles for elliptic operators in an open bounded set $\Omega$, with a small Poincaré constant. Two kind of results would be first provided: the first one to deal with $L^{\infty}$-source terms, see \Cref{lem:varphiTilde} below, mainly based on de Giorgi's iteration in the spirit of \cite[Lemma 6.10]{LMNN20} and the second one to deal with $W^{-1, \infty}$-source terms, see \Cref{lem:varphiTildeDivergenceSource} below, following standard Stampacchia's iterative strategy. The conjugation of these two results would culminate to a weak quantitative maximum principle for a general elliptic operator, see \Cref{lem:varphidiv} below. The main novelty of these results would be the quantification of the constants in function of the Poincaré constant of $\Omega$ and in function of $L^{\infty}$-bounds of the lower order terms appearing in the elliptic operators.

\subsubsection{With a $L^{\infty}$-source term}

The main result of this part is the following weak maximum principle with a $L^{\infty}$-source term.

\begin{lemma}
	\label{lem:varphiTilde}
	For every $\varepsilon>0$, $C' \geq 1$, there exist $c>0$ and $C>0$, independent of $\varepsilon$, such that for every bounded open set $\Omega \subset \R^2$ with $C_P(\Omega)^2 \leq (C')^2 \varepsilon^2$, $W \in L^{\infty}(\Omega;\R^2)$, $f \in L^{\infty}(\Omega;\R)$, satisfying
	\begin{equation}
	\label{eq:epsPetitPoincareW}
	\varepsilon + \varepsilon \norme{W}_{L^{\infty}(\Omega)}\leq c,
\end{equation}
then there exists a unique $\Phi \in H_0^1(\Omega)$ solution of
	\begin{equation}
		\label{Eq-Phi}
				-\Delta \Phi + W \cdot \nabla \Phi = f     \text{ in } \Omega,
	\end{equation}
satisfying
	\begin{equation}
		\label{Est-Size-Phi}
		 \norme{\Phi}_{\infty} \leq C \varepsilon^2 \norme{f}_{L^{\infty}(\Omega)},
	\end{equation}
	together with
	\begin{equation}
		\label{Est-Size-PhiH0}
		 \norme{\Phi}_{H_0^1(\Omega)} \leq C \varepsilon \norme{f}_{L^{2}(\Omega)}.
	\end{equation}
\end{lemma}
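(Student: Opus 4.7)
The proof splits into three parts: \emph{existence-uniqueness}, the \emph{$H_0^1$ bound} \eqref{Est-Size-PhiH0}, and the \emph{$L^\infty$ bound} \eqref{Est-Size-Phi}. Existence and uniqueness will come from Lax-Milgram, the $H_0^1$ bound from a straightforward energy estimate, and the $L^\infty$ bound from De Giorgi iteration applied directly to the weak formulation.

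I would set up the bilinear form $a(u,v) = \int_{\Omega} \nabla u \cdot \nabla v\,dx + \int_{\Omega} (W \cdot \nabla u)\,v\,dx$ on $H_0^1(\Omega)$. Continuity is clear. For coercivity, I combine Cauchy-Schwarz with the Poincaré inequality $\norme{u}_{L^2(\Omega)} \leq C'\varepsilon \norme{\nabla u}_{L^2(\Omega)}$ to bound the drift: $a(u,u) \geq \norme{\nabla u}_{L^2}^2 - C'\varepsilon\norme{W}_{\infty}\norme{\nabla u}_{L^2}^2 \geq \tfrac12 \norme{\nabla u}_{L^2}^2$ provided $\varepsilon\norme{W}_\infty \leq c$ is small enough (depending on $C'$), which is guaranteed by \eqref{eq:epsPetitPoincareW}. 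Lax-Milgram then gives a unique $\Phi$. For \eqref{Est-Size-PhiH0}, I test the equation with $\Phi$ itself: the drift term is absorbed as above, the RHS is handled by $|\int f\Phi| \leq C'\varepsilon\norme{f}_{L^2}\norme{\nabla\Phi}_{L^2}$, and absorption yields $\norme{\nabla \Phi}_{L^2} \leq C\varepsilon \norme{f}_{L^2}$.

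The heart of the argument is the $L^\infty$ bound. Fix $k \geq 0$ and plug the test function $v_k := (\Phi - k)_+ \in H_0^1(\Omega)$, which is supported on the superlevel set $A_k := \set{\Phi > k}$ and satisfies $\nabla v_k = \nabla\Phi$ a.e.\ on $A_k$, into the weak formulation. Absorbing the drift using Poincaré exactly as in the previous step yields the truncated energy estimate $\norme{\nabla v_k}_{L^2} \leq C\varepsilon\norme{f}_\infty |A_k|^{1/2}$. The 2D Gagliardo-Nirenberg inequality $\norme{v_k}_{L^q} \leq C_q \norme{v_k}_{L^2}^{2/q}\norme{\nabla v_k}_{L^2}^{1-2/q}$ for $q > 2$, combined with Poincaré, then gives $\norme{v_k}_{L^q} \leq C_q\varepsilon^{1+2/q}\norme{f}_\infty |A_k|^{1/2}$. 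The Chebyshev-type inclusion $(h-k)|A_h|^{1/q} \leq \norme{v_k}_{L^q}$ for $h>k$ produces the De Giorgi recursion
\begin{equation*}
|A_h| \leq \left(\frac{C\varepsilon^{1+2/q}\norme{f}_\infty}{h-k}\right)^q |A_k|^{q/2}, \qquad q > 2.
\end{equation*}
Since $q/2 > 1$, Stampacchia's iteration lemma forces $|A_\kappa| = 0$ for a suitable $\kappa \leq C\varepsilon^2\norme{f}_\infty$, giving the upper bound; the same argument applied to $-\Phi$ (with test function $(-\Phi-k)_+$) provides the lower bound.

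The main obstacle is the non-symmetry of $-\Delta + W\cdot\nabla$: unlike the Laplace setting of \cite[Lemma 6.10]{LMNN20}, $\Phi$ is \emph{not} the minimizer of a coercive functional, so the key truncation energy inequality cannot be read off from a minimality comparison of $\Phi$ and $\min(\Phi,k)$. It must instead be derived directly from the variational formulation, with the drift contribution absorbed at every truncation level via the smallness of $\varepsilon\norme{W}_\infty$ through the Poincaré inequality. A secondary delicate point is calibrating the De Giorgi iteration to yield the sharp $\varepsilon^2$ scaling rather than merely a $\varepsilon^{1+2/q}$-type bound with extra volume dependence; this forces a careful choice of $q$ and may require refining the Sobolev-Poincaré step by a Faber-Krahn-type inequality on the shrinking level sets $A_k$, which sharpens the effective Poincaré constant once $|A_k|$ has dropped below $\varepsilon^2$.
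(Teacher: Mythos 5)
Your Lax--Milgram set-up and the $H^1_0$ bound \eqref{Est-Size-PhiH0} match the paper's argument. The $L^\infty$ estimate, however, has a genuine gap: the De Giorgi/Stampacchia recursion you derive, $|A_h| \leq \bigl(C\varepsilon^{1+2/q}\norme{f}_\infty/(h-k)\bigr)^q|A_k|^{q/2}$, forces the Stampacchia lemma to absorb a factor $|A_0|^{(q-2)/(2q)} \leq |\Omega|^{(q-2)/(2q)}$, so you arrive at $\norme{\Phi}_\infty \leq C\varepsilon^{1+2/q}|\Omega|^{(q-2)/(2q)}\norme{f}_\infty$. This is precisely the weaker bound the authors prove in \Cref{lem:varphiTildeDivergenceSource} via exactly this Stampacchia strategy, and they explicitly note there that they do not know how to remove the $|\Omega|$-dependence along this route or upgrade the power of $\varepsilon$. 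Your flagged fix via Faber--Krahn does not close the gap either: it only improves $C_P(A_k)$ once $|A_k|\lesssim\varepsilon^2$, but reaching that regime by Chebyshev from the $H^1_0$ bound already costs a threshold of order $\varepsilon\norme{f}_\infty|\Omega|^{1/2}$, so the $|\Omega|$-dependence resurfaces before the refinement can take effect.

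The paper avoids this altogether with two ingredients that are absent from your proposal. First, it establishes a \emph{local} $L^2$ estimate: testing the weak formulation against $\psi\Phi$ with $\psi(x)=e^{-2|x-x_0|}$ and absorbing both the gradient cross-term and the drift, one obtains $\int_{B(x_0,1)\cap\Omega}|\Phi|^2 \leq Ck^4$ for \emph{every} unit ball $B(x_0,1)$, where $k^2=C_P(\Omega)^2$; the decaying weight localizes the estimate and no factor of $|\Omega|$ ever enters. Second, it invokes the geometric fact (\Cref{lemma:poincarecarre}, i.e.\ \cite[Corollary~6.9]{LMNN20}) that a set meeting every $(1/2)$-square in measure at most $k^2$ has Poincaré constant $\lesssim k$. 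Combining these, the superlevel set $\Omega_0=\{\Phi>t_0\}$ with $t_0=\sqrt{Ck}$ has $|\Omega_0\cap B|\leq k^3$ for all unit balls and hence $C_P(\Omega_0)^2\lesssim k^3$. Iterating, $\Phi_n=(\Phi_{n-1}-t_n)^+$ lives on sets $\Omega_n$ whose Poincaré constants $k_n$ decay geometrically, $t_n\sim\sqrt{k_{n-1}}$, and $\sum t_n<\infty$ gives a finite $L^\infty$ bound independent of $|\Omega|$, which rescales to $C\varepsilon^2\norme{f}_\infty$. In short, the paper runs De Giorgi on the \emph{Poincaré constants} of the level sets rather than on their measures, and the crucial tools are the weighted local estimate and the thin-domain lemma rather than global Sobolev plus Stampacchia.
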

This is a generalization of \cite[Lemma 6.10]{LMNN20} and the new difficulty is the presence of the drift term $W$. In order to prove \Cref{lem:varphiTilde}, we need the following rescaled version.

\begin{lemma}
	\label{lem:varphiTildeRescaled}
There exist $c>0$ small enough and $C>0$ large enough such that for every bounded open set $\Omega$ contained in $\R^2$, with $C_P(\Omega)^2 \leq c^2$, $W \in L^{\infty}(\Omega;\R^2)$, $\norme{W}_{\infty} \leq 1$, $f \in L^{\infty}(\Omega;\R)$, $\norme{f}_{\infty} \leq 1$, there exists a unique $\Phi \in H_0^1(\Omega)$ such that
	\begin{equation}
		\label{Eq-PhiRescaled}
				-\Delta \Phi + W \cdot \nabla \Phi = f     \text{ in } \Omega,
	\end{equation}
	and $\Phi$ satisfies 
	\begin{equation}
		\label{Est-Size-PhiRescaled}
		 \norme{\Phi}_{\infty} \leq C.
	\end{equation}
\end{lemma}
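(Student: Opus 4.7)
The plan is to obtain existence and uniqueness via Lax--Milgram and then deduce the $L^\infty$ bound by a De~Giorgi iteration applied directly to the weak formulation of \eqref{Eq-PhiRescaled}. Since the operator $-\Delta + W\cdot\nabla$ is not self-adjoint, the minimization viewpoint of \cite[Lemma 6.10]{LMNN20} is unavailable; the drift term must be handled in the weak formulation by absorption into the gradient energy, using the smallness condition $\|W\|_\infty C_P(\Omega) \le c$.

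For the well-posedness, consider the continuous bilinear form $a(\Phi,\psi) := \int_\Omega \nabla\Phi\cdot\nabla\psi + \int_\Omega (W\cdot\nabla\Phi)\psi$ on $H_0^1(\Omega)$. Combining Cauchy--Schwarz with Poincar\'e's inequality yields
\[
\left|\int_\Omega (W\cdot\nabla\Phi)\Phi\right| \le \|W\|_\infty\, C_P(\Omega)\,\|\nabla\Phi\|_{L^2}^2 \le c\,\|\nabla\Phi\|_{L^2}^2,
\]
so $a(\Phi,\Phi) \ge (1-c)\|\nabla\Phi\|_{L^2}^2$. For $c$ sufficiently small (in particular $c<1$), $a$ is coercive on $H_0^1(\Omega)$ and Lax--Milgram produces a unique weak solution $\Phi\in H_0^1(\Omega)$ of \eqref{Eq-PhiRescaled}.

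For the $L^\infty$ estimate, I would run a De~Giorgi iteration on the equation. For every $k \ge 0$, set $v_k := (\Phi - k)_+ \in H_0^1(\Omega)$ and $A_k := \{\Phi > k\}$. Testing \eqref{Eq-PhiRescaled} with $v_k$ and absorbing the drift contribution exactly as in the coercivity step yields
\[
\tfrac{1}{2}\|\nabla v_k\|_{L^2}^2 \le \int_{A_k} |f|\, v_k.
\]
Combining Cauchy--Schwarz with the Faber--Krahn--type inequality $\|v_k\|_{L^2} \le C\,|A_k|^{1/2}\,\|\nabla v_k\|_{L^2}$ (a consequence of the two-dimensional Sobolev embedding $W^{1,1}_0(\R^2) \hookrightarrow L^2(\R^2)$ applied to $v_k^2$, noting that $\operatorname{supp} v_k \subset A_k$) and with the Gagliardo--Nirenberg inequality $\|v_k\|_{L^4}^2 \le C\,\|v_k\|_{L^2}\,\|\nabla v_k\|_{L^2}$, these estimates chain to a super-linear decay of the form
\[
|A_h| \le \frac{C\,\|f\|_\infty^4\,|A_k|^{\alpha}}{(h-k)^4}, \qquad h > k \ge 0,
\]
for some exponent $\alpha > 1$. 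Stampacchia's iterative lemma then gives $|A_{k^*}| = 0$ at an explicit level $k^* \le C$ (exploiting $\|f\|_\infty \le 1$ together with the control of $|A_0|$ via $|\Omega|$, which in the applications of interest is bounded by $|B_2|$); applying the same reasoning to $-\Phi$ (which solves \eqref{Eq-PhiRescaled} with $W,f$ replaced by $-W,-f$, still of unit size) gives the lower bound. Combining, $\|\Phi\|_{L^\infty(\Omega)} \le C$.

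The main obstacle is the non-self-adjointness of $-\Delta + W\cdot\nabla$, which breaks the minimization structure exploited in \cite[Lemma 6.10]{LMNN20}. This is bypassed by working directly in the variational formulation and absorbing the drift, at each truncation level, into the leading gradient energy via the smallness of $C_P(\Omega)\|W\|_\infty$; the De~Giorgi scheme then closes just as in the drift-free Laplacian case, producing the uniform bound \eqref{Est-Size-PhiRescaled}.
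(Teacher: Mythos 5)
Your well-posedness step (Lax--Milgram, with coercivity restored by absorbing the drift via the smallness of $C_P(\Omega)\|W\|_\infty$) matches the paper. The gap is in the $L^\infty$ bound. The lemma claims a constant $C$ \emph{independent of $\Omega$}: the only hypothesis on $\Omega$ is $C_P(\Omega)^2\le c^2$, and the measure $|\Omega|$ is completely unconstrained (think of a long thin rectangle $[0,L]\times[0,w]$, whose Poincar\'e constant is $\sim w$ while its measure $Lw$ is arbitrarily large). Your argument is a Stampacchia iteration on the \emph{measures} $|A_k|$ of the level sets. Starting from $|A_0|\le|\Omega|$, the decay $|A_h|\le C|A_k|^\alpha/(h-k)^4$ and Stampacchia's lemma produce a vanishing level $k^*$ that grows with $|A_0|$, so the resulting bound is $\|\Phi\|_\infty\le C(|\Omega|)$, not $\|\Phi\|_\infty\le C$. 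Appealing to ``$|\Omega|\le|B_2|$ in the applications'' is not available: in the proof of \Cref{lem:varphiTilde} one rescales to $\Omega_0=\frac{c_0}{C'\varepsilon}\Omega$, whose measure is of order $\varepsilon^{-2}|\Omega|$ and blows up as $\varepsilon\to 0$, precisely the regime of interest.

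The paper closes this gap differently. First it proves a \emph{local} estimate, uniform over all unit balls $B$: testing with $e^{-2|x-x_0|}\Phi$ and using Poincar\'e gives $\int_{B\cap\Omega}|\Phi|^2\le Ck^4$ with $k=C_P(\Omega)$, independently of where $B$ sits and of $|\Omega|$. Then it iterates not on measures but on \emph{Poincar\'e constants of the super-level sets}: by Chebyshev, $|\{\Phi>t_0\}\cap B|\le Ck^4/t_0^2$ uniformly in $B$, and the thin-domain Poincar\'e inequality \cite[Cor.~6.9]{LMNN20} (\Cref{lemma:poincarecarre}) upgrades this to a bound $C_P(\{\Phi>t_0\})^2\le Ck^4/t_0^2$. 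Choosing $t_n\sim\sqrt{k_{n-1}}$ gives geometric improvement of the Poincar\'e constants, and the truncation levels sum to a universal $T$. This is the mechanism that removes the $|\Omega|$ dependence; your scheme is in fact the strategy the paper reserves for \Cref{lem:varphiTildeDivergenceSource} (the $W^{-1,\infty}$ source case), where the $|\Omega|$-dependent bound is accepted explicitly in \eqref{Est-Size-PhiDiv}.
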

By a scaling argument, we can then deduce the following result.

\begin{proof}[Proof of \Cref{lem:varphiTilde} from \Cref{lem:varphiTildeRescaled}]
Let us set $c_0$ and $C_0$ the constants provided by \Cref{lem:varphiTildeRescaled}. Let us set $$\Omega_0 = \frac{c_0}{C'\varepsilon} \Omega,\ \tilde{\Phi} = \frac{c_0^2}{C'^2 \varepsilon^2 \norme{f}_{L^{\infty}}} \Phi\left(\frac{C'\varepsilon}{c_0}\cdot\right),\ \tilde{W} = \frac{C' \varepsilon}{c_0}W\left(\frac{C'\varepsilon}{c_0}\cdot\right),\  \tilde{f} =   \norme{f}_{L^{\infty}}^{-1} f\left(\frac{C'\varepsilon}{c_0}\cdot\right),$$ 
then $C_P(\Omega_0)^2 \leq c_0^2$, $\|\tilde{W}\|_{\infty} \leq 1$ provided that $c \leq c_0/C'$, $\|\tilde{f}\|_{\infty} \leq 1$ so one can apply \Cref{lem:varphiTildeRescaled} that gives $ \|\tilde{\Phi}\|_{\infty} \leq C_0$, which leads to \eqref{Est-Size-Phi}. For obtaining \eqref{Est-Size-PhiH0}, we test the variational formulation of \eqref{Eq-Phi} with $\Phi$ to get
$$ \int_{\Omega} |\nabla \Phi|^2 + \int_{\Omega} (W \cdot \nabla \Phi) \Phi = \int_{\Omega} f \Phi.$$
We then use Young's inequality, together with Poincaré inequality using the assumption \eqref{eq:epsPetitPoincareW} for obtaining
$$ \int_{\Omega} |\nabla \Phi|^2 \leq C \varepsilon^2 \int_{\Omega} |f|^2,$$
which leads to the desired conclusion.
\end{proof}

The rest of the part is then devoted to the proof of \Cref{lem:varphiTildeRescaled}.

\begin{proof}[Proof of \Cref{lem:varphiTildeRescaled}]

We divide the proof into several steps and $c>0$ is a positive numerical constant that will be fixed later.\\

\textit{Step 1: Existence and uniqueness by Lax-Milgram's lemma.} Set $k^2 = C_P(\Omega)^2 \leq c^2$. Let us introduce
\begin{equation}
a(u,v) = \int_{\Omega} \nabla u \cdot \nabla v + \int_{\Omega}( W \cdot \nabla u) v \qquad \forall u, v \in H_0^1(\Omega).
\end{equation}
It is straightforward to prove that $a$ is a continuous, bilinear form on $H_0^1(\Omega)$. Let us check the coercivity of $a$. For $c<1/2$, by using Young's inequality, 
\begin{equation}
a(u,u) = \int_{\Omega} |\nabla u|^2 + \int_{\Omega}( W \cdot \nabla u) u \geq \frac{1}{2}(1 - k^2) \int_{\Omega} |\nabla u|^2 \geq \frac{3}{8} \int_{\Omega} |\nabla u|^2 = \frac{3}{8} \norme{u}_{H_0^1(\Omega)}^2.
\end{equation}
Let us now consider 
\begin{equation}
l(v) = \int_{\Omega} f v\qquad \forall v \in H_0^1(\Omega).
\end{equation}
It is straightforward to prove that $l$ is a continuous, linear form on $H_0^1(\Omega)$.

Therefore, by Lax-Milgram's lemma, there exists a unique $\Phi \in H_0^1(\Omega)$ such that
\begin{equation}
\label{eq:formulationvariational}
 \int_{\Omega} \nabla \Phi \cdot \nabla v + \int_{\Omega}( W \cdot \nabla \Phi) v = \int_{\Omega} f v\qquad \forall v \in H_0^1(\Omega).
\end{equation}

\textit{Step 2: Local estimate on $\Phi$.}

Now we want to prove some local estimate, i.e. there exists a positive numerical constant $C>0$ such that for every unit ball $B \subset \R^2$, 
\begin{equation}
\label{eq:localestimatePhi}
\int_{B \cap \Omega} |\Phi|^2 \leq C k^4.
\end{equation}
Up to a translation argument, one can assume that $B=B(0,1)$.
Let us introduce 
\begin{equation}
\varphi(x) = \exp(-|x|).
\end{equation}
Then, it is easy to check that $\varphi$ satisfies the following properties
$$ \forall 1 \leq p \leq \infty,\  \varphi \in W^{1,p}(\R^2),\ |\nabla \varphi| \leq \varphi,\ \int_{\R^2} \varphi = 2\pi.$$
Moreover, as a consequence we have that $\varphi \Phi \in H_0^1(\Omega)$. So, one can apply the Poincaré inequality to $\varphi \Phi$, this leads to
\begin{equation}
\int_{\Omega} |\varphi \Phi|^2 \leq k^2 \int_{\Omega} |\nabla (\varphi \Phi)|^2 \leq 2k^2 \int_{\Omega} \varphi^2 |\nabla \Phi|^2 + 2 k^2 \int_{\Omega} \varphi^2 | \Phi|^2,
\end{equation}
hence providing $c < 1/2$, we get
\begin{equation}
\label{eq:firstestimatelocal}
\int_{\Omega} |\Phi|^2 \psi \leq 4 k^2 \int_{\Omega} |\nabla \Phi|^2 \psi,
\end{equation}
where $\psi = \varphi^2$.

Now set $v = \psi \Phi$ that also belongs to $H_0^1(\Omega)$ so one can apply the variational formulation \eqref{eq:formulationvariational} to $v$ to get
\begin{equation}
\label{eq:formvarpsiPhi}
\int_{\Omega}  |\nabla \Phi|^2 \psi + \int_{\Omega} (\nabla \psi \cdot \nabla \Phi) \Phi + \int_{\Omega}( W \cdot \nabla \Phi) \psi \Phi = \int_{\Omega} f \psi \Phi.
\end{equation}
We bound the right hand side of \eqref{eq:formvarpsiPhi} by using \eqref{eq:firstestimatelocal} and $\int_{\R^2} \psi \leq 1$,
\begin{equation}
\label{eq:formvarpsiPhi1}
\left|\int_{\Omega} f \psi \Phi\right| \leq \left(\int_{\Omega}| \Phi|^2 \psi\right)^{1/2} \leq 2 k \left(\int_{\Omega} |\nabla \Phi|^2 \psi\right)^{1/2}.
\end{equation}
For the second term in the left hand side of \eqref{eq:formvarpsiPhi}, we proceed as follows using \eqref{eq:firstestimatelocal}, providing $c<1/16$,
\begin{multline}
\label{eq:formvarpsiPhi2}
\left| \int_{\Omega} (\nabla \psi \cdot \nabla \Phi) \Phi\right| \leq 2  \int_{\Omega} \psi |\nabla \Phi| |\Phi| \leq 2 \left(\int_{\Omega} | \Phi|^2 \psi\right)^{1/2} \left(\int_{\Omega} |\nabla \Phi|^2 \psi\right)^{1/2} \leq 4 k \left(\int_{\Omega} |\nabla \Phi|^2 \psi\right)\\
 \leq \frac{1}{4} \int_{\Omega} |\nabla \Phi|^2 \psi .
\end{multline}
For the third left hand side term of \eqref{eq:formvarpsiPhi}, we proceed as follows using \eqref{eq:firstestimatelocal} and $\|W\|_{\infty} \leq 1$, providing $c<1/8$,
\begin{multline}
\label{eq:formvarpsiPhi3}
\left| \int_{\Omega} (W \cdot \nabla \Phi) \psi \Phi\right| \leq  \int_{\Omega} \psi |\nabla \Phi| |\Phi| \leq \left(\int_{\Omega} |\Phi|^2 \psi\right)^{1/2} \left(\int_{\Omega} |\nabla \Phi|^2 \psi\right)^{1/2} \leq 2 k \left(\int_{\Omega} |\nabla \Phi|^2 \psi\right)\\
 \leq \frac{1}{4} \int_{\Omega}  |\nabla \Phi|^2 \psi.
\end{multline}
By conjugating \eqref{eq:formvarpsiPhi}, \eqref{eq:formvarpsiPhi1}, \eqref{eq:formvarpsiPhi2} and \eqref{eq:formvarpsiPhi3} we get for $c < 1/16$,
\begin{equation}
\int_{\Omega}  |\nabla \Phi|^2 \psi \leq 4 k \left(\int_{\Omega} |\nabla \Phi|^2 \psi \right)^{1/2},
\end{equation}
so 
\begin{equation}
\label{eq:secondestimatelocal}
\int_{\Omega}  |\nabla \Phi|^2 \psi \leq 16 k^2.
\end{equation}
By using \eqref{eq:firstestimatelocal} and \eqref{eq:secondestimatelocal}, we get the expected result \eqref{eq:localestimatePhi} with $C=64$.\\

\textit{Third step: Poincaré constant of thin domains.} We have the following result, that is exactly \cite[Corollary 6.9]{LMNN20}.
\begin{lemma}
\label{lemma:poincarecarre}
There exists $c_0>0$ small enough such that for every $k>0$, for every bounded open set $\Omega \subset \R^2$ satisfying
\begin{equation}
| \Omega \cap Q | \leq k^2 \leq c_0^2\ \qquad \text{for any square}\ Q\ \text{with $1/2$ side-length},
\end{equation}
then $C_P(\Omega)^2 \leq C k^2$ for some numerical constant $C>0$, independent of $k$.
\end{lemma}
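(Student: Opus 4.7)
My plan is to proceed via Cheeger's inequality. Since $C_P(\Omega)^{-2}$ equals the first Dirichlet eigenvalue $\lambda_1(\Omega)$ of $-\Delta$ on $\Omega$, and Cheeger's inequality $\lambda_1(\Omega) \geq h(\Omega)^2/4$ holds for any bounded open set with $h(\Omega) = \inf\{P(E)/|E| : E \subset \Omega \text{ Borel with } 0 < |E| < \infty\}$, it is enough to prove the lower bound $h(\Omega) \geq c/k$ for some numerical constant $c>0$. This is the place where the hypothesis $|\Omega \cap Q| \leq k^2$ (for every $1/2$-square $Q$) must enter quantitatively.

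To bound the Cheeger constant I would tile $\R^2$ by closed squares $\{Q_i\}_{i}$ of side $1/2$ with pairwise disjoint interiors. Fix a Borel set $E \subset \Omega$ with $0 < |E| < \infty$. Since $|E \cap Q_i| \leq |\Omega \cap Q_i| \leq k^2 \leq c_0^2$, choosing $c_0 \leq 1/(2\sqrt{2})$ ensures $|E \cap Q_i| \leq |Q_i|/2$, and the relative isoperimetric inequality in the square yields a universal constant $c_{\mathrm{iso}}>0$ with
\begin{equation*}
P(E, Q_i^{\circ}) \geq c_{\mathrm{iso}} \, |E \cap Q_i|^{1/2}
\end{equation*}
for every $i$, where $Q_i^{\circ}$ denotes the interior of $Q_i$. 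The key elementary observation is that $|E \cap Q_i| \leq k^2$ rewrites as $|E \cap Q_i|^{1/2} \geq k^{-1}|E \cap Q_i|$, so the lower bound becomes $P(E, Q_i^{\circ}) \geq (c_{\mathrm{iso}}/k)\,|E \cap Q_i|$. Summing over $i$ and using that the $Q_i^{\circ}$ are pairwise disjoint,
\begin{equation*}
P(E) \;\geq\; \sum_i P(E, Q_i^{\circ}) \;\geq\; \frac{c_{\mathrm{iso}}}{k}\sum_i |E \cap Q_i| \;=\; \frac{c_{\mathrm{iso}}}{k}\,|E|.
\end{equation*}
Taking the infimum over $E$ gives $h(\Omega) \geq c_{\mathrm{iso}}/k$, and therefore $C_P(\Omega)^2 \leq 4k^2/c_{\mathrm{iso}}^2$, which is the claim.

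The main obstacle is to justify that the relative isoperimetric inequality in a unit-scale square really produces a constant $c_{\mathrm{iso}}$ that does not degenerate as $|E| \to 0$; this is classical in dimension two (it follows from the Euclidean isoperimetric inequality by a reflection argument once $|E| \leq |Q|/2$) but it is the step where one genuinely uses the two-dimensional geometry, and it is why the smallness threshold $c_0 \leq 1/(2\sqrt 2)$ must be imposed. All the remaining ingredients, namely the Cheeger inequality $\lambda_1 \geq h^2/4$ via the co-area formula and the additivity of perimeter across the tiling, are standard and hold for arbitrary bounded open sets, so no further care is required beyond this single geometric estimate.
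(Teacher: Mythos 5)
Your proof is correct, and it takes a route the paper itself does not spell out: the paper supplies no argument for this lemma and simply quotes it as Corollary 6.9 of \cite{LMNN20}. Your derivation via the Cheeger constant is self-contained and valid. The decisive step --- rewriting $|E\cap Q_i|\leq k^2$ as $|E\cap Q_i|^{1/2}\geq k^{-1}|E\cap Q_i|$ so that the relative isoperimetric bound $P(E,Q_i^{\circ})\geq c_{\mathrm{iso}}\,|E\cap Q_i|^{1/2}$ (available once $c_0\leq 1/(2\sqrt2)$ guarantees $|E\cap Q_i|\leq |Q_i|/2$) upgrades to a linear-in-measure bound --- is exactly where the clean $k^2$ scaling is born, and combined with superadditivity of perimeter over the disjoint open interiors $Q_i^{\circ}$ it gives $h(\Omega)\geq c_{\mathrm{iso}}/k$ and hence $C_P(\Omega)^2\leq 4k^2/c_{\mathrm{iso}}^2$. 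The more common way to prove the same statement works directly in $H^1_0(\Omega)$: extend $u$ by zero, combine the endpoint Gagliardo--Nirenberg inequality $\|u-\bar u_Q\|_{L^2(Q)}\leq C\|\nabla u\|_{L^1(Q)}$ with H\"older's inequality $\|\nabla u\|_{L^1(Q)}\leq |\Omega\cap Q|^{1/2}\|\nabla u\|_{L^2(Q)}\leq k\|\nabla u\|_{L^2(Q)}$, absorb the mean value $\bar u_Q$ (which is $O(k)\|u\|_{L^2(Q)}$ by Cauchy--Schwarz, $u$ being supported in $\Omega\cap Q$), and sum over the tiling. The two routes are dual to each other via the coarea formula, but yours makes the origin of the $k^2$ geometrically transparent and avoids the final absorption step. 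If you write this up, the two points to be explicit about are that Cheeger's lower bound $\lambda_1\geq h^2/4$ indeed holds for arbitrary bounded open sets (the coarea proof only uses $u\in C_c^{\infty}(\Omega)$ and density), and that the relative isoperimetric constant on a square of fixed side is uniform over all sets $E$ with $|E\cap Q|\leq |Q|/2$; both facts are standard and your reflection remark is an acceptable pointer for the latter.
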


\textit{Step 4: De Giorgi scheme.}

We now fix $c= \min (1/32,c_0)>0$ where $c_0>0$ is the constant in \Cref{lemma:poincarecarre}. Let $t_0>0$ that we will be fixed later and $\Omega_0 = \{ \Phi > t_0\} \subset \Omega$ with $k_0^2 = C_P(\Omega_0)^2$. From \eqref{eq:localestimatePhi}, we get
\begin{equation}
\label{eq:localestimatePhi0}
\int_{B \cap \Omega} |\Phi|^2 \leq C k^4,
\end{equation}
then
\begin{equation}
| \{ \Phi > t_0 \} \cap B | \leq \frac{C k^4}{t_0^2}.
\end{equation}
So, by using \Cref{lemma:poincarecarre},
$$k_0^2 \leq \frac{C k^4}{t_0^2}.$$
Then, let us set $t_0 =  \sqrt{C k}$ leading to $k_0^2 \leq k^3 \leq c^3$.

We now recall the well-known facts: $H_0^1(\Omega_0) \subset H_0^1(\Omega)$ and $\Phi_0 := (\Phi-t_0)^{+} \in H_0^1(\Omega_0)$ with $\nabla \Phi_0 = \nabla \Phi 1_{\Omega_0}$, see for instance \cite[Proposition 1.3.10]{WYW06}. Applying the variational formulation \eqref{eq:formulationvariational} we then get
\begin{equation}
\label{eq:formulationvariationalOmega0}
 \int_{\Omega_0} \nabla \Phi_0 \cdot \nabla v + \int_{\Omega_0}( W \cdot \nabla \Phi_0) v = \int_{\Omega_0} f v\qquad \forall v \in H_0^1(\Omega_0).
\end{equation}
We then iterate the previous arguments, that is we first prove the local estimate on $\Phi_0$, there exists a positive numerical constant $C>0$ such that for every unit ball $B \subset \R^2$, 
\begin{equation}
\label{eq:localestimatePhi0}
\int_{B \cap \Omega_0} |\Phi_0|^2 \leq C k_0^4.
\end{equation}
Let $t_1>0$ that we will be fixed later and $\Omega_1 = \{ \Phi_0 > t_1\} = \{ (\Phi-t_0)^+ > t_1\} \subset \Omega_0$,  $k_1^2 = C_P(\Omega_1)^2$. We then obtain from \eqref{eq:localestimatePhi0} for every unit ball $B \subset \R^2$,
\begin{equation}
| \{ \Phi_0 > t_1 \} \cap B | \leq \frac{C k_0^4}{t_1^2}.
\end{equation}
So, by using \Cref{lemma:poincarecarre},
$$k_1^2  \leq \frac{C k_0^4}{t_1^2}.$$
Then, let us set $t_1 =  \sqrt{C k_0}$ leading to $k_1^2 \leq k_0^3$.

By induction, we can construct $$t_n = \sqrt{C k_{n-1}},\ \Omega_n = \{\Phi_{n-1} > t_n\},\ k_n^2 = C_P(\Omega_n)^2,\ \Phi_n = (\Phi_{n-1}-t_n)^{+}\qquad \forall n \in \N,$$
with the convention $k_{-1} = k = C_p(\Omega)$, $\Phi_{-1} = \Phi$, leading to
$$ k_{n+1} \leq \left(c^{3/2}\right)^{n+2}\qquad \forall n \geq 0.$$

With such a construction, we have because $c \leq 1/2$,
\begin{align}
\sum_{n=0}^{+\infty} t_n &\leq \sqrt{C} \sum_{n=-1}^{+\infty} 2^{-\frac{3(n+1)}{2}}:=T,\label{eq:estimationT}\\
| \{ \Phi_n > t_{n+1} \} \cap B |& \leq k_{n}^3\qquad \text{for every unit ball}\  B \subset \R^2,\ \forall n \in \N,\label{eq:localestimatePhin}\\
\Phi& \leq t_0 + t_1 + \dots + t_n + \Phi_{n}\qquad \forall n \in \N.\label{eq:phifunctionphin}
\end{align}
Therefore, for every unit ball $B \subset \R^2$, we have from \eqref{eq:localestimatePhin} that
$$ |\{\Phi_n > t_{n+1}\} \cap B| \to 0\ \text{as}\ n \to +\infty,$$
hence conjugating with \eqref{eq:estimationT} and \eqref{eq:phifunctionphin},
\begin{multline}
| \{\Phi > 2 T\} \cap B | \leq | \{\Phi > 2 T\} \cap \{\Phi_n \leq t_{n+1}\} \cap B| + | \{\Phi > 2 T\} \cap \{\Phi_n > t_{n+1}\} \cap B|\\ \leq | \{\Phi_n > t_{n+1}\} \cap B| \to 0\ \text{as}\ n \to +\infty.
\end{multline}
Then $| \{\Phi > 2 T\}| = 0$ so $\Phi \leq 2 T$ almost everywhere

By linearity, using that $-\Phi$ solves \eqref{Eq-PhiRescaled} replacing $f$ by $-f$, we then obtain with the same strategy that $-\Phi \leq 2 T$ then the expected bound \eqref{Est-Size-PhiRescaled}.
\end{proof}

\subsubsection{With a $W^{-1, \infty}$-source term}

The main result of this part is the following weak maximum principle with a $W^{-1,\infty}$-source term.

\begin{lemma}
	\label{lem:varphiTildeDivergenceSource}
	For every $\varepsilon>0$, $C' \geq 1$, $p>2$, there exist $c>0$ and $C>0$, independent of $\varepsilon$, such that for every bounded open set $\Omega \subset \R^2$ with $C_P(\Omega)^2 \leq (C')^2 \varepsilon^2$, $W \in L^{\infty}(\Omega;\R^2)$, $f \in L^{\infty}(\Omega;\R)$, satisfying
	\begin{equation}
	\label{eq:epsPetitPoincareWDiv}
	\varepsilon + \varepsilon \norme{W}_{L^{\infty}(\Omega)}\leq c,
\end{equation}
there exists a unique $\Phi \in H_0^1(\Omega)$ such that
	\begin{equation}
		\label{Eq-PhiDiv}
				-\Delta \Phi + W \cdot \nabla \Phi = \nabla \cdot g     \text{ in } \Omega,
	\end{equation}
	and $\Phi$ satisfies 
	\begin{equation}
		\label{Est-Size-PhiDiv}
		 \norme{\Phi}_{\infty} \leq C |\Omega|^{(p-2)/2p} \varepsilon^{2/p} \norme{g}_{L^{\infty}(\Omega)},
	\end{equation}
	together with
	\begin{equation}
		\label{Est-Size-PhiH0Div}
		 \norme{\Phi}_{H_0^1(\Omega)} \leq C  \norme{g}_{L^{2}(\Omega)}.
	\end{equation}
\end{lemma}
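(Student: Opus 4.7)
The plan is to follow the classical Stampacchia iteration scheme, tracking carefully the dependence on the Poincaré constant $C_P(\Omega)\leq C'\varepsilon$. Existence, uniqueness and the $H^1_0$ estimate \eqref{Est-Size-PhiH0Div} would follow from the Lax--Milgram lemma applied to the bilinear form $a(u,v)=\int_\Omega\nabla u\cdot\nabla v+\int_\Omega(W\cdot\nabla u)v$: the smallness condition \eqref{eq:epsPetitPoincareWDiv} combined with Poincaré and Young's inequality gives coercivity on $H^1_0(\Omega)$, exactly as in Step~1 of the proof of \Cref{lem:varphiTildeRescaled}. Testing the variational formulation against $\Phi$ itself, integrating the source $\nabla\cdot g$ by parts, and absorbing the drift via Young and Poincaré yields \eqref{Est-Size-PhiH0Div}.

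For the $L^\infty$ bound I would implement Stampacchia's scheme at levels $k\geq 0$. Set $A_k:=\{\Phi>k\}$ and $v_k:=(\Phi-k)_+\in H^1_0(\Omega)$, with $\nabla v_k=\nabla\Phi\,\mathbf{1}_{A_k}$; since $A_k\subset\Omega$, monotonicity of the Poincaré constant gives $C_P(A_k)\leq C'\varepsilon$ as well. Using $v_k$ as a test function, Cauchy--Schwarz, Young, Poincaré on $A_k$, and absorption of the drift term through \eqref{eq:epsPetitPoincareWDiv} should produce
\begin{equation*}
\norme{\nabla v_k}_{L^2(\Omega)}^2\leq C\,\norme{g}_{L^\infty(\Omega)}^2\,|A_k|.
\end{equation*}

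The key input beyond \Cref{lem:varphiTilde} is a Sobolev inequality refined by the Poincaré constant: for every $p>2$ and every $v\in H^1_0(\Omega)$,
\begin{equation*}
\norme{v}_{L^p(\Omega)}\leq C_p\,\varepsilon^{2/p}\,\norme{\nabla v}_{L^2(\Omega)},
\end{equation*}
which I would derive from the two-dimensional Gagliardo--Nirenberg interpolation $\norme{v}_{L^p}\leq C_p\norme{v}_{L^2}^{2/p}\norme{\nabla v}_{L^2}^{1-2/p}$ combined with Poincaré, or directly from the precise Sobolev estimate announced in \Cref{sec:strategyofproof}. Using $v_k\geq h-k$ on $A_h$ for $h>k$ then yields the Stampacchia-type decay
\begin{equation*}
(h-k)^p\,|A_h|\leq\norme{v_k}_{L^p}^p\leq C^p\,\varepsilon^2\,\norme{g}_{L^\infty}^p\,|A_k|^{p/2},
\end{equation*}
with super-linear exponent $p/2>1$. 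The classical iteration lemma then produces a level $k_\star=C\,|\Omega|^{(p-2)/(2p)}\,\varepsilon^{2/p}\,\norme{g}_{L^\infty}$ at which $|A_{k_\star}|=0$, so $\Phi\leq k_\star$ almost everywhere; applying the same argument to $-\Phi$, which solves \eqref{Eq-PhiDiv} with $g$ replaced by $-g$, delivers \eqref{Est-Size-PhiDiv}.

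The main obstacle I anticipate is pinning down the $\varepsilon^{2/p}$ prefactor in the Sobolev inequality: in the De~Giorgi argument of \Cref{lem:varphiTildeRescaled} the smallness of the Poincaré constant was exploited through an exponentially weighted local $L^2$ estimate, whereas here it must be transferred directly into the $H^1_0\hookrightarrow L^p$ embedding, and it is this constraint which forces the use of an exponent $p>2$ and hence the unavoidable factor $|\Omega|^{(p-2)/(2p)}$ in the final bound, in contrast with the cleaner $\varepsilon^2$ obtained for $L^\infty$ source terms.
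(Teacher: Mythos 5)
Your proposal is correct and follows essentially the same route as the paper: existence and the $H^1_0$ bound via Lax--Milgram, then Stampacchia iteration on the level sets $A(k)=\{\Phi>k\}$, with the $\varepsilon^{2/p}$-refined Sobolev embedding $\norme{v}_{L^p}\leq C\varepsilon^{2/p}\norme{\nabla v}_{L^2}$ (the paper's \Cref{lem:sobolevinequalities}, which it also derives from a Gagliardo--Nirenberg-type inequality plus Poincaré) feeding into the super-linear decay $|A(h)|\leq(C\varepsilon^{2/p}\norme{g}_\infty/(h-k))^{p}|A(k)|^{p/2}$ and the classical iteration lemma of Stampacchia. The only cosmetic difference is that you invoke Poincaré on $A_k$ while the paper simply applies it on $\Omega$ to $\Phi_k\in H^1_0(\Omega)$, which is cleaner and avoids worrying about whether $A_k$ is an admissible domain; both are valid.
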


The main difference between \Cref{lem:varphiTildeDivergenceSource} and \Cref{lem:varphiTilde} is that the bound on \eqref{Est-Size-PhiDiv} depends explicitly on the measure of $\Omega$. This is due to the fact that in the following proof, we will argue differently. Indeed, we will follow standard Stampacchia's iterative strategy on the measure of the level sets of $u$, instead of the previous iterative strategy on the Poincaré constant on the level sets of $u$. We do not know if one can remove the dependence on the measure of $\Omega$ and more importantly replace $\varepsilon^{2/p}$ in \eqref{Est-Size-PhiDiv} by $\varepsilon$.

To prove \Cref{lem:varphiTildeDivergenceSource}, we will use the following precise Sobolev's inequality.
\begin{lemma}
\label{lem:sobolevinequalities}
For every $\varepsilon>0$, $C' \geq 1$, $p\geq 2$, there exists $C>0$, independent of $\varepsilon$, such that for every bounded open set $\Omega \subset \R^2$ with $C_P(\Omega)^2 \leq (C')^2 \varepsilon^2$, we have
\begin{equation}
\norme{u}_{L^p(\Omega)} \leq C \varepsilon^{2/p} \norme{\nabla u}_{L^2(\Omega)}\ \qquad \forall u \in H_0^1(\Omega).
\end{equation}
\end{lemma}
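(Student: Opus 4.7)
The plan is to combine two ingredients: the Poincaré inequality (which, by hypothesis, supplies a small $L^2$ norm for $u$ in terms of $\|\nabla u\|_{L^2}$ with constant $C'\varepsilon$), and the scale-invariant Gagliardo--Nirenberg inequality in the plane, which bridges $L^p$ control by $L^2$ control of both $u$ and $\nabla u$. The factor $\varepsilon^{2/p}$ comes precisely from feeding the Poincaré estimate into the $L^2$-factor appearing in Gagliardo--Nirenberg.

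First I would extend $u \in H_0^1(\Omega)$ by zero to all of $\R^2$, which preserves $H^1$ regularity and does not change any of the relevant norms $\|u\|_{L^p(\Omega)} = \|u\|_{L^p(\R^2)}$, $\|\nabla u\|_{L^2(\Omega)} = \|\nabla u\|_{L^2(\R^2)}$. Then I would apply the two-dimensional Gagliardo--Nirenberg inequality, which says that for every $p \in [2,+\infty)$ there is a constant $C_p$ such that
\begin{equation*}
\|u\|_{L^p(\R^2)} \leq C_p \, \|u\|_{L^2(\R^2)}^{2/p}\,\|\nabla u\|_{L^2(\R^2)}^{1-2/p} \qquad \forall u \in H^1(\R^2).
\end{equation*}
The exponents $2/p$ and $1-2/p$ are forced by scaling: setting $u_\lambda(x)=u(\lambda x)$, the norms transform as $\|u_\lambda\|_{L^p}=\lambda^{-2/p}\|u\|_{L^p}$, $\|u_\lambda\|_{L^2}=\lambda^{-1}\|u\|_{L^2}$, and $\|\nabla u_\lambda\|_{L^2}=\|\nabla u\|_{L^2}$ in the plane, which uniquely determines the interpolation exponents.

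To conclude I would invoke the Poincaré inequality $\|u\|_{L^2(\Omega)} \leq C_P(\Omega)\,\|\nabla u\|_{L^2(\Omega)} \leq C'\varepsilon\,\|\nabla u\|_{L^2(\Omega)}$, and substitute it into the $\|u\|_{L^2(\R^2)}^{2/p}$ factor above to obtain
\begin{equation*}
\|u\|_{L^p(\Omega)} \leq C_p\,(C'\varepsilon)^{2/p}\,\|\nabla u\|_{L^2(\Omega)}^{2/p}\,\|\nabla u\|_{L^2(\Omega)}^{1-2/p} = C_p\,(C')^{2/p}\,\varepsilon^{2/p}\,\|\nabla u\|_{L^2(\Omega)},
\end{equation*}
which is the desired estimate after absorbing $(C')^{2/p}$ and $C_p$ into one constant $C$ depending on $p$ and $C'$ only. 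The boundary case $p=2$ is recovered directly from Poincaré.

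There is no serious obstacle here, since Gagliardo--Nirenberg in $\R^2$ is classical; the only point to be careful about is that the constant $C$ in the statement is allowed to depend on $p$ and $C'$, so one does not need a uniform-in-$p$ version of the inequality. If one prefers a self-contained argument avoiding an external citation, the Gagliardo--Nirenberg step can be replaced by the well-known derivation $\|v\|_{L^2(\R^2)}\leq C\|\nabla v\|_{L^1(\R^2)}$ applied to $v=|u|^{p/2}$ together with Hölder's inequality, which yields exactly the same exponents.
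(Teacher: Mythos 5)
Your proof is correct and follows essentially the same route as the paper: extend $u$ by zero to $\R^2$, apply a two-dimensional interpolation inequality to trade the $L^p$ norm for a product of $L^2$ and $\dot H^1$ norms, then feed Poincar\'e into the $L^2$ factor to produce the $\varepsilon^{2/p}$. The only stylistic difference is that the paper derives the interpolation step iteratively from the elementary inequality $\|u\|_{2m}^m \leq m \|u\|_{2(m-1)}^{m-1}\|\nabla u\|_2$ and then interpolates in $p$, whereas you invoke the Gagliardo--Nirenberg inequality directly; both deliver identical exponents.
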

\begin{proof}
Let $u \in C_c^{1}(\Omega)$. We extend $u$ by $0$ outside $\Omega$, then we get that $u \in C_c^{1}(\R^2)$. Therefore, one can apply \cite[Equation (20) Page 280]{Bre11} to get
\begin{equation}
\norme{u}_{2m}^m \leq m \norme{u}_{2(m-1)}^{m-1} \norme{\nabla u}_{2}\qquad \forall m \geq 1.
\end{equation}
Take $m=2$, we obtain
\begin{equation}
\norme{u}_{4}^2 \leq 2 \norme{u}_{2} \norme{\nabla u}_{2}.
\end{equation}
Then, we apply Poincaré's inequality, we get
\begin{equation}
\norme{u}_{4}^2 \leq C \varepsilon \norme{\nabla u}_{2}^2.
\end{equation}
Therefore, we obtain the conclusion of the lemma with $p=4$. Then, we iterate and the conclusion follows by interpolation.
\end{proof}

Let us now dedicate the end of this part to the proof of \Cref{lem:varphiTildeDivergenceSource}.

\begin{proof}
The existence and uniqueness of such a $\Phi$ comes from a classical application of the Lax-Milgram lemma.

Set $\Phi_k = (\Phi - k)^{+}$ where $k >0$, $A(k) = \{x \in \Omega\ ;\ \Phi > k\}$. We have that $\Phi_k \in H_0^1(\Omega)$, then we have
\begin{equation}
\int_{\Omega} |\nabla \Phi_k|^2 + \int_{\Omega} (W \cdot \nabla \Phi_k) \Phi_k \leq \left(\int_{A(k)} |g|^2\right)^{1/2}\left(\int_{\Omega} |\nabla \Phi_k|^2\right)^{1/2}.
\end{equation}
The second left hand side term can be absorbed by using Poincaré inequality and \eqref{eq:epsPetitPoincareWDiv} as follows
\begin{equation}
\left|\int_{\Omega} (W \cdot \nabla \Phi_k) \Phi_k\right| \leq C \varepsilon \|W\|_{\infty} \int_{\Omega} |\nabla \Phi_k|^2 \leq c  \int_{\Omega} |\nabla \Phi_k|^2.
\end{equation}
Therefore, for $p>2$, by using the Sobolev's inequality, we have
\begin{equation}
\varepsilon^{-2/p} \norme{\Phi_k}_p \leq C \left(\int_{\Omega} |\nabla \Phi_k|^2\right)^{1/2} \leq  C\left(\int_{A(k)} |g|^2\right)^{1/2} \leq C|A(k)|^{1/2} \norme{g}_{\infty}.
\end{equation}
So, for $h > k$, using that $A(h) \subset A(k)$, and on $A(h)$, we have $\Phi_k \geq h-k$, we then have
\begin{equation}
|A(h)|^{1/p} (h-k) \leq C \varepsilon^{2/p} |A(k)|^{1/2} \norme{g}_{\infty},
\end{equation}
so
\begin{equation}
|A(h)| \leq \left(\frac{C \varepsilon^{2/p}\norme{g}_{\infty}}{h-k}\right)^p |A(k)|^{p/2}.
\end{equation}
By using that $p/2>1$, we have from \cite[Lemma 4.1]{Sta65},
\begin{equation}
|A(k)| = 0\qquad \forall k \geq C  \varepsilon^{2/p}\norme{g}_{\infty} |\Omega|^{(p/2-1 )/p}.
\end{equation}
Then, 
\begin{equation}
\Phi \leq C |\Omega|^{(p-2)/2p}\varepsilon^{2/p}\norme{g}_{\infty}.
\end{equation}
The same arguments also give
\begin{equation}
\Phi \geq -C |\Omega|^{(p-2)/2p} \varepsilon^{2/p}\norme{g}_{\infty},
\end{equation}
so the conclusion \eqref{Est-Size-PhiDiv}. 

The estimate \eqref{Est-Size-PhiH0Div} directly comes from the application of the variational formulation to $\Phi$.
\end{proof}

By using \Cref{lem:varphiTilde} and \Cref{lem:varphiTildeDivergenceSource}, we can now obtain the following result that is the main result of this part. 
\begin{prop}
	\label{lem:varphidiv}
	For every $\varepsilon>0$, $C' \geq 1$, $p>2$, there exist $c>0$ and $C>0$, independent of $\varepsilon$, such that for every bounded open set $\Omega \subset \R^2$ with $C_P(\Omega)^2 \leq (C')^2 \varepsilon^2$, $W_1,W_2 \in L^{\infty}(\Omega;\R^2)$, $V \in L^{\infty}(\Omega;\R)$, satisfying
\begin{equation}
	\label{eq:epsPetitPoincareGeneralpreuve}
	\varepsilon + \varepsilon^{2/p} \norme{W_1}_{L^{\infty}(\Omega)} + \varepsilon\norme{W_2}_{L^{\infty}(\Omega)} + \varepsilon^2 \norme{V}_{L^{\infty}(\Omega)} \leq c,
\end{equation}
there exists a unique $\varphi \in H^1(\Omega)$ such that
	\begin{equation}
		\label{Eq-varphiNodal}
				-\Delta \varphi - \nabla \cdot( W_1 \varphi) + W_2 \cdot \nabla \varphi +  V \varphi = 0  \text{ in } \Omega, 	
	\end{equation}
	and $ \tilde{\varphi} = \varphi - 1$ satisfies 
	\begin{equation}
		\label{Est-Size-tilde-varphiNodal}
		\tilde{\varphi} \in H_0^1(\Omega)\text{ and }  \norme{\tilde{\varphi}}_{\infty} \leq C  \left( \varepsilon^{2/p} |\Omega|^{(p-2)/2p} \norme{W_1}_{L^{\infty}(\Omega)} + \varepsilon^2 \norme{V}_{L^{\infty}(\Omega)}\right).
	\end{equation}
\end{prop}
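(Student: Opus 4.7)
The plan is to reformulate in terms of $\tilde{\varphi} = \varphi - 1 \in H_0^1(\Omega)$ and construct $\tilde{\varphi}$ as the fixed point of an iteration built on the weak maximum principles \Cref{lem:varphiTilde} and \Cref{lem:varphiTildeDivergenceSource}.

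Substituting $\varphi = 1 + \tilde{\varphi}$ into \eqref{Eq-varphiNodal}, the problem reduces to finding $\tilde{\varphi} \in H_0^1(\Omega)$ such that
$$-\Delta \tilde{\varphi} + W_2 \cdot \nabla \tilde{\varphi} = \nabla \cdot\bigl(W_1(1 + \tilde{\varphi})\bigr) - V(1 + \tilde{\varphi}) \quad \text{in } \Omega.$$
I would then define the iteration $T : L^{\infty}(\Omega) \to L^{\infty}(\Omega)$ by $T(\Phi) := \tilde{\varphi}_\Phi$, where $\tilde{\varphi}_\Phi \in H_0^1(\Omega)$ is the unique solution, produced by Lax-Milgram applied to the operator $-\Delta + W_2 \cdot \nabla$ (coercivity following from Poincaré's inequality and the smallness $\varepsilon \|W_2\|_\infty \leq c$ in \eqref{eq:epsPetitPoincareGeneralpreuve}), of
$$-\Delta \tilde{\varphi}_\Phi + W_2 \cdot \nabla \tilde{\varphi}_\Phi = \nabla \cdot\bigl(W_1(1 + \Phi)\bigr) - V(1 + \Phi) \quad \text{in } \Omega.$$

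Since $\varepsilon \|W_2\|_\infty \leq c$, both \Cref{lem:varphiTilde} (with $f = -V(1+\Phi)$) and \Cref{lem:varphiTildeDivergenceSource} (with $g = W_1(1+\Phi)$) apply to the operator $-\Delta + W_2 \cdot \nabla$, yielding
$$\|T(\Phi)\|_\infty \leq A\bigl(1 + \|\Phi\|_\infty\bigr), \qquad A := C \varepsilon^{2/p} |\Omega|^{(p-2)/(2p)} \|W_1\|_\infty + C \varepsilon^2 \|V\|_\infty.$$
By linearity, $T(\Phi_1) - T(\Phi_2)$ solves the analogous equation with source $\nabla \cdot(W_1(\Phi_1 - \Phi_2)) - V(\Phi_1 - \Phi_2)$, so the same two lemmas give the Lipschitz estimate $\|T(\Phi_1) - T(\Phi_2)\|_\infty \leq A \|\Phi_1 - \Phi_2\|_\infty$.

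The main obstacle is ensuring the contraction constant $A$ is strictly less than one, which does not follow immediately from \eqref{eq:epsPetitPoincareGeneralpreuve} because of the extra factor $|\Omega|^{(p-2)/(2p)}$. In the intended applications $\Omega$ is a perforated subdomain of $B_2$, so $|\Omega| \leq 4\pi$ and this factor becomes a pure $p$-constant; taking $c$ small enough in terms of $C'$ and $p$ then ensures $A \leq 1/2$. Under this contraction, the Banach fixed-point theorem produces a unique $\tilde{\varphi} \in L^{\infty}(\Omega) \cap H_0^1(\Omega)$ with $T(\tilde{\varphi}) = \tilde{\varphi}$, and the inequality $\|\tilde{\varphi}\|_\infty = \|T(\tilde{\varphi})\|_\infty \leq A + A \|\tilde{\varphi}\|_\infty$ closes into $\|\tilde{\varphi}\|_\infty \leq 2A$, which is precisely \eqref{Est-Size-tilde-varphiNodal}. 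Uniqueness of $\varphi$ in $H^1(\Omega)$ then follows because the difference of two solutions is a fixed point of the associated homogeneous contraction, hence vanishes.
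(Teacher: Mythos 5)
Your proposal is essentially the same argument as the paper's: the paper constructs $\tilde{\varphi}$ as a Neumann series $\sum_{n\geq 0}\Phi_n$ where $\Phi_0$ solves $-\Delta\Phi_0 + W_2\cdot\nabla\Phi_0 = -V + \nabla\cdot W_1$ and $\Phi_n$ solves $-\Delta\Phi_n + W_2\cdot\nabla\Phi_n = -V\Phi_{n-1} + \nabla\cdot(W_1\Phi_{n-1})$, using \Cref{lem:varphiTilde} and \Cref{lem:varphiTildeDivergenceSource} to obtain the geometric decay of $\|\Phi_n\|_\infty$; this is exactly the Picard iteration of your map $T$ started from $0$, so the two constructions coincide term by term. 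You have also correctly flagged a genuine subtlety: the contraction factor $A$ carries $|\Omega|^{(p-2)/(2p)}$, which is not bounded by \eqref{eq:epsPetitPoincareGeneralpreuve} alone (a bounded set can have small Poincaré constant while $|\Omega|$ is large). The paper simply writes ``assuming that $C(\varepsilon^{2/p}|\Omega|^{(p-2)/2p}\|W_1\|_\infty + \varepsilon^2\|V\|_\infty)\leq 1/2$'' without deriving it from the hypotheses, whereas you explicitly note that this holds in the two places the proposition is used ($\Omega\subset B(x,\varepsilon)$ in \Cref{lem:nodalsetu} and $\Omega\subset B_2$ in \Cref{prop:constructionmultiplier}), so your version is in fact slightly more careful.

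One small gap: your uniqueness claim ``the difference of two solutions is a fixed point of the associated homogeneous contraction, hence vanishes'' is not quite valid as stated, because a priori a difference of two $H^1$-solutions need not lie in $L^\infty(\Omega)$, which is the space on which $T$ is a contraction. The clean route is the one you already used for the intermediate steps: the bilinear form associated with $-\Delta - \nabla\cdot(W_1\,\cdot) + W_2\cdot\nabla + V$ is coercive on $H_0^1(\Omega)$ under \eqref{eq:epsPetitPoincareGeneralpreuve} (since $\varepsilon\|W_1\|_\infty\leq\varepsilon^{2/p}\|W_1\|_\infty\leq c$, $\varepsilon\|W_2\|_\infty\leq c$, and $\varepsilon^2\|V\|_\infty\leq c$, all lower-order contributions are absorbed via Poincaré), so Lax--Milgram gives uniqueness directly.
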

This result has to be compared with \cite[Lemma 3.2]{LMNN20}. The new difficulty comes from the presence of the drift terms $W_1, W_2$. 
\begin{proof}[Proof of \Cref{lem:varphidiv} from \Cref{lem:varphiTilde} and \Cref{lem:varphiTildeDivergenceSource}]
By \Cref{lem:varphiTilde} and \Cref{lem:varphiTildeDivergenceSource}, let $\Phi_0 \in H_0^1(\Omega)$ be the unique solution satisfying
\begin{equation}
		\label{Eq-Phi0}
				-\Delta \Phi_0 + W_2 \cdot \nabla \Phi_0 = - V + \nabla \cdot(W_1)   \text{ in } \Omega,
	\end{equation}
From \eqref{Est-Size-Phi} and \eqref{Est-Size-PhiDiv}, we have 
$ \norme{\Phi_0}_{\infty} \leq C \left(\varepsilon^{2/p} |\Omega|^{(p-2)/2p} \norme{W_1}_{\infty} + \varepsilon^2 \norme{V}_{\infty} \right).$ From \eqref{Est-Size-PhiH0} and \eqref{Est-Size-PhiH0Div}, we also have $ \norme{\Phi_0}_{H_0^1(\Omega)} \leq C (|\Omega|^{1/2} \norme{W_1}_{\infty} + \varepsilon |\Omega|^{1/2}  \norme{V}_{\infty} ) .$
For $n \geq 1$, we set $\Phi_n \in H_0^1(\Omega)$ the unique solution satisfying
\begin{equation}
		\label{Eq-Phin}
				-\Delta \Phi_n + W_2 \cdot \nabla \Phi_n = -V \Phi_{n-1}  + \nabla \cdot ( W_1 \Phi_{n-1})   \text{ in } \Omega,
	\end{equation}
We then have from \eqref{Est-Size-Phi} and \eqref{Est-Size-PhiDiv},
$ \norme{\Phi_n}_{\infty} \leq C \left(\varepsilon^{2/p} |\Omega|^{(p-2)/2p} \norme{W_1}_{\infty} + \varepsilon^2 \norme{V}_{\infty}\right) \norme{\Phi_{n-1}}_{\infty}$ for every $n \geq 1$. Therefore, assuming that $C \left(\varepsilon^{2/p} |\Omega|^{(p-2)/2p} \norme{W_1}_{\infty} + \varepsilon^2 \norme{V}_{\infty} \right)\leq 1/2$, we have $$\norme{\Phi_n}_{\infty} \leq C\left(\varepsilon^{2/p} |\Omega|^{(p-2)/2p}\norme{W_1}_{\infty} + \varepsilon^2 \norme{V}_{\infty}\right) 2^{-n}\ \forall n \geq 0.$$ 
Moreover, we also have $\norme{\Phi_n}_{H_0^1(\Omega)} \leq C(  |\Omega|^{1/2} \norme{W_1}_{\infty}  + \varepsilon |\Omega|^{1/2} \norme{V}_{\infty} )\norme{\Phi_{n-1}}_{\infty} $ for $n \geq 1$, leading to $$\norme{\Phi_n}_{H_0^1(\Omega)} \leq C |\Omega|^{1/2} (\norme{W_1}_{\infty} + \norme{V}_{\infty} )  2^{-n}\qquad \forall n \geq 0.$$
 Therefore, the series $\sum_{n \geq 0} \Phi_n$ converges absolutely then converges to $\tilde{\varphi}$ in $L^{\infty}$ and in $H_0^1(\Omega)$. Moreover, we have
$$ \norme{\tilde{\varphi}}_{\infty} \leq \sum_{n \geq 0} \norme{\Phi_n}_{\infty} \leq C \left(\varepsilon^{2/p} |\Omega|^{(p-2)/2p} \norme{W_1}_{\infty} + \varepsilon^2 \norme{V}_{\infty}\right).$$
Furthermore, we have that 
\begin{equation*}
				-\Delta \tilde{\varphi}  - \nabla \cdot (W_1 \tilde{\varphi}) + W_2 \cdot \nabla \tilde{\varphi} + V \tilde{\varphi} = - V + \nabla \cdot(W_1) \text{ in } \Omega, 
	\end{equation*}
Hence, $\varphi = \tilde{\varphi} + 1$ satisfies \eqref{Eq-varphiNodal}. This concludes the proof. 
\end{proof}

\subsection{Properties of the nodal set and perforation process}

\textbf{Setting of parameters.} Fix now 
\begin{equation}
\label{eq:deffirstp}
p = 2+\delta > 2,
\end{equation}
where $\delta>0$ is as in \Cref{thm:landislocal}. Take $\varepsilon>0$ a free parameter satisfying 
\begin{equation}
	\label{eq:epsPetitPoincareGeneral1}
	\varepsilon + \varepsilon^{2/(2+\delta)} \norme{W_1}_{L^{\infty}(B_2)} + \varepsilon \norme{W_2}_{L^{\infty}(B_2)} + \varepsilon^2 \norme{V}_{L^{\infty}(B_2)} \leq c.
\end{equation}

Let us now give an application of \Cref{lem:varphidiv} to establish the fundamental property on the nodal set of $u$, that we called before \eqref{eq:Pepsilon}.
\begin{lemma}
\label{lem:nodalsetu}
Let $u$ be a real-valued solution to $-\Delta u - \nabla \cdot (W_1  u) + W_2 \cdot \nabla u + Vu =0$ in a ball $B(x,\varepsilon)$ with $\varepsilon>0$ satisfying \eqref{eq:epsPetitPoincareGeneral1} and $u \in H^1(B(x,\varepsilon)) \cap C^0(\overline{B(x,\varepsilon)})$. Then, if $u>0$ on $\partial B(x, \varepsilon)$ then $u >0$ in $B(x, \varepsilon)$.
\end{lemma}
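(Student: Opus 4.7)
The plan is to construct a positive $H^1$-multiplier $\varphi$ on $B(x,\varepsilon)$ that solves the same equation as $u$, then make the change of function $v = u/\varphi$ (which kills the zero-order term), and finally close by a standard weak maximum principle for $v$.

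\textbf{Step 1 (Multiplier).} Since $C_P(B(x,\varepsilon))^2 \leq C\varepsilon^2$ and the smallness assumption \eqref{eq:epsPetitPoincareGeneral1} is exactly the hypothesis \eqref{eq:epsPetitPoincareGeneralpreuve} of \Cref{lem:varphidiv} with $p = 2+\delta$, I may apply \Cref{lem:varphidiv} on $\Omega = B(x,\varepsilon)$ to produce $\varphi \in H^1(B(x,\varepsilon))$ solving the same equation as $u$, with $\varphi - 1 \in H_0^1(B(x,\varepsilon))$. The key observation is that, with $|B(x,\varepsilon)| = \pi\varepsilon^2$ and $p = 2+\delta$, a direct computation gives
\[
\varepsilon^{2/p}\,|B(x,\varepsilon)|^{(p-2)/(2p)} \;\leq\; C_0\,\varepsilon,
\]
so that \eqref{Est-Size-tilde-varphiNodal} becomes $\|\varphi - 1\|_{L^\infty(B(x,\varepsilon))} \leq C(\varepsilon\|W_1\|_\infty + \varepsilon^2 \|V\|_\infty)$, which by \eqref{eq:epsPetitPoincareGeneral1} is at most $1/2$ once $c$ is chosen small enough. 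Hence $1/2 \leq \varphi \leq 3/2$ almost everywhere.

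\textbf{Step 2 (Equation for $v$).} Define $v := u/\varphi \in H^1(B(x,\varepsilon)) \cap L^\infty$. Since $u$ and $\varphi$ solve the same PDE, expanding $u = \varphi v$ and collecting the terms multiplied purely by $v$ (which vanish thanks to the equation for $\varphi$), I obtain the divergence-form equation
\[
-\nabla\cdot\!\bigl(\varphi^2 \nabla v\bigr) + \varphi^2 (W_2 - W_1)\cdot \nabla v = 0 \qquad \text{in } B(x,\varepsilon),
\]
which is uniformly elliptic (coefficient bounded between $1/4$ and $9/4$) and carries no zero-order term.

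\textbf{Step 3 (Maximum principle and conclusion).} By continuity of $u$ on $\overline{B(x,\varepsilon)}$, the constant $m := \min_{\partial B(x,\varepsilon)} u$ is strictly positive. Since $\varphi - 1 \in H_0^1$, the trace of $v$ on $\partial B(x,\varepsilon)$ agrees with that of $u$, so $w := (m - v)^+$ lies in $H_0^1(B(x,\varepsilon))$. Testing the equation above against $w$, using $\nabla v = -\nabla w$ on $\{v < m\}$, $\varphi^2 \geq 1/4$, and the Poincaré inequality $\|w\|_{L^2} \leq C\varepsilon \|\nabla w\|_{L^2}$, I obtain
\[
\frac{1}{4}\|\nabla w\|_{L^2}^2 \;\leq\; C\varepsilon\bigl(\|W_1\|_\infty + \|W_2\|_\infty\bigr)\|\nabla w\|_{L^2}^2.
\]
By \eqref{eq:epsPetitPoincareGeneral1}, the coefficient on the right is strictly less than $1/4$, forcing $w \equiv 0$, that is, $v \geq m$ in $B(x,\varepsilon)$. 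Therefore $u = \varphi v \geq m/2 > 0$ in $B(x,\varepsilon)$.

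The only non-routine point will be the algebraic matching in Step 1 between the factor $|B(x,\varepsilon)|^{(p-2)/(2p)}$ supplied by \Cref{lem:varphidiv} and the $\varepsilon^{2/p}$ weight: it is precisely the choice $p = 2+\delta$ from \eqref{eq:deffirstp} that makes the product proportional to $\varepsilon$, so that the smallness condition \eqref{eq:epsPetitPoincareGeneral1} suffices to dominate both the drift contribution to the multiplier in Step 1 and the drift contribution in the maximum-principle step. Everything else reduces to a direct computation.
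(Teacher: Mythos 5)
Your proof is correct, but it follows a genuinely different route from the paper's.

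The paper argues by contradiction: assuming $u(x_0)\leq 0$ for some $x_0\in B(x,\varepsilon)$, it forms the open sublevel set $\Omega=\{y\in B(x,\varepsilon):u(y)<\alpha/2\}$ (where $\alpha$ is the boundary minimum), applies \Cref{lem:varphidiv} on that $\Omega$ to build a multiplier $\varphi$ there, and then works with the \emph{linear combination} $g=(\alpha/2)\varphi-u\in H_0^1(\Omega)$, which solves the same homogeneous PDE by linearity. Testing $g$ against itself, Young plus Poincaré force $g\equiv 0$, hence $u=(\alpha/2)\varphi>\alpha/4$ on $\Omega$, contradicting $u(x_0)\leq 0$.

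You instead build $\varphi$ directly on the full ball $B(x,\varepsilon)$, pass to the \emph{ratio} $v=u/\varphi$, derive the drift--diffusion equation $-\nabla\cdot(\varphi^2\nabla v)+\varphi^2(W_2-W_1)\cdot\nabla v=0$, and close with a genuine weak maximum principle by testing against $w=(m-v)^+$. Both routes are sound. Your Step~1 bookkeeping is right: with $p=2+\delta$ and $|B(x,\varepsilon)|=\pi\varepsilon^2$, the factor $\varepsilon^{2/p}|\Omega|^{(p-2)/(2p)}$ is proportional to $\varepsilon$, and the smallness hypothesis \eqref{eq:epsPetitPoincareGeneral1} then controls both $\varepsilon\|W_1\|_\infty$ (since $\varepsilon\leq\varepsilon^{2/(2+\delta)}$ for $\varepsilon\leq1$) and $\varepsilon\|W_2\|_\infty$, as your Step~3 requires. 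The main thing each approach buys: the paper's contradiction argument never needs the divergence-form equation for $v$ and uses only the linearity of the operator together with the smallness of the Poincaré constant on a sublevel set; your approach is more conceptual and anticipates the manipulation $v=u/\varphi$ that the paper performs later in \Cref{lem:divergenceperforated} (with the roles of $W_1$, $W_2$ swapped there, since in Section~3 the multiplier solves the formal adjoint). The one place where you elide a detail is Step~2: deriving the weak form of the equation for $v$ from the weak forms of $u$ and $\varphi$ requires the same careful product-rule/test-function computation the paper carries out in the proof of \Cref{lem:divergenceperforated}; that computation does go through here (one tests the $u$-equation against $\psi\varphi$ and the $\varphi$-equation against $u\psi$ and subtracts), so this is a presentation gap rather than a mathematical one.
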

\begin{proof}
We may assume that $u \geq \alpha >0$ on $\partial B(x, \varepsilon)$ by continuity. We argue by contradiction, assume there exists $x_0 \in B(x,\varepsilon)$ such that $u(x_0) \leq 0$.

Consider now the set $\Omega = \{y \in B(x, \varepsilon)\ ;\ u(y) < \alpha /2\}$. This is an open set strictly inside $B(x,\varepsilon)$ because $u \geq \alpha$ on $\partial B(x,\varepsilon)$ and it is not empty because $u(x_0) \leq 0< \alpha/2$.

Since $u \in C^0(\overline{\Omega})$ and $u = \alpha /2$ on $\partial\Omega$ by continuity, we then know that $(u-\alpha/2)$ belongs to $H_0^1(\Omega)$, see for instance \cite[Theorem 9.17, (i) $\Rightarrow$ (ii)]{Bre11}. Remark that $\Omega \subset B(x,\varepsilon)$ so $C_P(\Omega)^2 \leq C\varepsilon^2$. Then, by \Cref{lem:varphidiv} recalling that $\varepsilon$ satisfies \eqref{eq:epsPetitPoincareGeneral1}, one can construct $\varphi \in H^1(\Omega)$ such that $3/4 \leq \varphi \leq 5/4$ because of \eqref{Est-Size-tilde-varphiNodal} and $-\Delta \varphi - \nabla \cdot(W_1 \varphi) + W_2 \cdot \nabla \varphi + V\varphi =0$ in $\Omega$ and $\varphi -1 \in H_0^1(\Omega)$. Therefore, $((\alpha/2) \varphi - \alpha/2)=(\alpha/2)(\varphi-1)$ belongs to $H_0^1(\Omega)$ and hence the function $g = ((\alpha/2) \varphi - u)$ belongs to $H_0^1(\Omega)$. Moreover, the function $g$ satisfies $-\Delta g - \nabla  \cdot (W_1  g) + W_2 \cdot \nabla g + Vg =0$ in $\Omega$ by linearity. By testing the associated variational formulation with $g$, using Young's inequality, we get for $\varepsilon$ satisfying \eqref{eq:epsPetitPoincareGeneral1} that $\int_{\Omega} |\nabla g|^2 \leq 0$ so $g=0$ in $\Omega$ because $g \in H_0^1(\Omega)$. Therefore, $u = (\alpha/2) \varphi$ in $\Omega$ but $\varphi > 1/2$ in $\Omega$. So $u > \alpha/4$ in $\Omega$, then $u > \alpha / 4$ in $B(x, \varepsilon)$, leading to a contradiction.
\end{proof}

\begin{cor}
\label{cor:nodalset}
Let $u$ be as in \Cref{thm:landislocal}. Then, the nodal set of $u$, 
\begin{equation}
Z := \{x \in B(0,2)\ ;\ u(x) = 0\},
\end{equation}
is closed in $B(0,2)$ and satisfies the following property
\begin{equation}
\label{eq:PepsilonBis}
\forall x_0 \in Z,\ \forall \rho \in (0,\varepsilon),\ \partial B(x_0,\rho) \cap (Z \cup \partial B(0,2)) \neq \emptyset.
\tag{P-$\varepsilon$}
\end{equation}
\end{cor}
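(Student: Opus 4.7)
The closedness of $Z$ in $B(0,2)$ is immediate from elliptic regularity: since $u$ solves \eqref{eq:equationuLandisquantitativelocal} weakly with $L^{\infty}$ coefficients, standard De Giorgi--Nash--Moser theory gives that $u$ is (Hölder) continuous in $B_2$, so $Z = u^{-1}(\{0\})$ is closed in $B(0,2)$.

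For property \eqref{eq:PepsilonBis}, my plan is to argue by contradiction using \Cref{lem:nodalsetu}. Suppose some $x_0 \in Z$ and $\rho \in (0, \varepsilon)$ satisfy $\partial B(x_0, \rho) \cap (Z \cup \partial B(0,2)) = \emptyset$. The first step will be a short geometric observation: since $x_0 \in B(0,2)$ and the connected sphere $\partial B(x_0, \rho)$ is disjoint from $\partial B(0,2)$, the sphere must lie entirely inside $B(0,2)$, and therefore $\overline{B(x_0, \rho)} \subset B(0, 2)$. Consequently $u$ is an $H^1$ solution of the equation on this closed ball and, by the continuity just mentioned, belongs to $C^0(\overline{B(x_0,\rho)})$. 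By disjointness from $Z$, $u$ does not vanish on the connected set $\partial B(x_0, \rho)$, hence has constant sign there.

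Replacing $u$ by $-u$ if necessary (the equation is linear in $u$, so the sign flip preserves it), I may assume $u > 0$ on $\partial B(x_0, \rho)$. Since the smallness condition \eqref{eq:epsPetitPoincareGeneral1} is monotone in the radius and $\rho < \varepsilon$, the hypotheses of \Cref{lem:nodalsetu} are met with $\rho$ in place of $\varepsilon$. Applying that lemma yields $u > 0$ throughout $B(x_0, \rho)$, which directly contradicts $u(x_0) = 0$, thereby establishing \eqref{eq:PepsilonBis}.

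Since the nontrivial work is already absorbed into \Cref{lem:nodalsetu} (and ultimately into the quantitative maximum principle \Cref{lem:varphidiv}), no serious obstacle is anticipated. The only points requiring a little care are the verification that $\overline{B(x_0, \rho)} \subset B(0,2)$ (where one must use \emph{both} parts of the disjointness assumption together with $x_0 \in B(0,2)$) and the appeal to the continuity of $u$ on the closed sub-ball to ensure the hypotheses of \Cref{lem:nodalsetu} are satisfied.
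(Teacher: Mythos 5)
Your proof is correct and follows the same route as the paper: continuity of $u$ (you via De Giorgi--Nash--Moser, the paper via $W^{1,q}_{\mathrm{loc}}$ regularity and Sobolev embedding) gives closedness of $Z$, and \eqref{eq:PepsilonBis} is obtained as a direct consequence of \Cref{lem:nodalsetu}. The paper states this in one line, so your argument simply makes explicit the contradiction step (constant sign on $\partial B(x_0,\rho)$, monotonicity of \eqref{eq:epsPetitPoincareGeneral1} in the radius, and the inclusion $\overline{B(x_0,\rho)}\subset B(0,2)$) that the authors leave to the reader.
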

\begin{proof}
Let $u$ be as in \Cref{thm:landislocal}, then $u \in W_{\mathrm{loc}}^{1,q}(B_2)$ for every $1 \leq q < +\infty$ by elliptic regularity, then by Sobolev embedding $u \in C^0(B_2)$. This immediately gives that $Z$ is closed in $B(0,2)$. Moreover, the property \eqref{eq:PepsilonBis} is a direct application of \Cref{lem:nodalsetu}.
\end{proof}

Let us take $x_{\max} \in \overline{B_{1}}$ such that
\begin{equation}
	\label{Def-x-max}
	|u(x_{\max})| = \sup_{\overline{B_{1}}} |u|.
\end{equation}

The next step is to construct a suitable perforation of the domain $B_2$ which avoids the nodal set $Z$, $\partial B(0,2)$, $x_{\max}$ and $0$.

%
%Setting
%$$ V_{\varepsilon} := \{ x \in \Omega\ ;\ \mathrm{dist}(x, \partial \Omega) \leq \varepsilon\},$$
%we are going to consider two cases in the sequel, depending if $x_{\max} \in \Omega \setminus V_{\varepsilon}$ or if $x_{\max} \in V_{\varepsilon}$. This distinction will only be used at \Cref{sec:fromCarlemantoObs}.

From \Cref{cor:nodalset}, we then get the following lemma, that is stated in \cite[Section 3.1]{LMNN20} (see also \cite[Lemma 2.10]{ELB22}).

\begin{lemma}
\label{lem:constructionperforated}
	For all $C_0 \geq 5$, for every $\varepsilon >0$, there exist finitely many $C_0\varepsilon$-separated closed disks of radius $\varepsilon$, whose union is denoted by $F_{\varepsilon}$, satisfying the following properties:
	
$\bullet$ these disks are $C_0\varepsilon$-separated from each other, from $Z$, from $\partial B(0,2)$, from $x_{\max}$ and from $0$,

$\bullet$ the set $Z \cup F_{\varepsilon} \cup {\partial B(0,2)}$ is a $6 C_0\varepsilon$-net in $B(0,2)$, meaning that for all $x \in B(0,2)$, $B(x, 6 C_0 \varepsilon) \cap ( Z \cup F_{\varepsilon} \cup {\partial B(0,2)}) \neq \emptyset$.

$\bullet$ the set 
	\begin{equation}
		\label{Omega-Eps}
		\Omega_{\varepsilon} := B(0,2) \setminus (Z \cup F_{\varepsilon})
	\end{equation}
	satisfies $C_P(\Omega_{ \varepsilon} )^2 \leq C^2 \varepsilon^2$ for some constant $C>0$ depending on $C_0$ but independent of $\varepsilon$, $u$, $W_1$, $W_2$ and $V$.
\end{lemma}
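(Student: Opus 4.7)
The plan is to build $F_\varepsilon$ by a grid construction, retaining disks only at lattice points sufficiently far from the forbidden set $Z \cup \partial B(0,2) \cup \{x_{\max}, 0\}$, and then to verify each of the three listed properties in turn. Concretely, I would fix the mesh $m = (C_0+2)\varepsilon$, declare a lattice point $z \in m\Z^2 \cap B(0,2)$ \emph{admissible} if $\operatorname{dist}(z, Z \cup \partial B(0,2) \cup \{x_{\max}, 0\}) \geq (C_0+1)\varepsilon$, and set $F_\varepsilon$ to be the finite union of closed disks $\overline{B(z,\varepsilon)}$ over admissible $z$. Then distinct admissible disks have centres at distance $\geq (C_0+2)\varepsilon$, hence are $C_0\varepsilon$-separated from one another, and the admissibility criterion directly enforces $C_0\varepsilon$-separation from each of $Z$, $\partial B(0,2)$, $x_{\max}$, and $0$.

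For the $6C_0\varepsilon$-net property, I would take $x \in B(0,2)$ and a nearest lattice point $z$, so $|x - z| \leq m\sqrt{2}/2 < 2C_0\varepsilon$. If $z$ is admissible we are done. Otherwise some obstruction $y$ lies within $(C_0+1)\varepsilon$ of $z$, hence within $3C_0\varepsilon$ of $x$; if $y \in Z \cup \partial B(0,2)$ this already witnesses the net property. If instead $y \in \{x_{\max}, 0\}$, I look at lattice neighbours of $z$ at lattice distance $2m$: each of $x_{\max}$, $0$ blocks admissibility only inside a single $(C_0+1)\varepsilon$-ball, so for $C_0 \geq 5$ at least one such neighbour $z'$ is not blocked by $\{x_{\max}, 0\}$, and either $z'$ is admissible (placing $F_\varepsilon$ within $6C_0\varepsilon$ of $x$) or $z'$ is blocked by $Z \cup \partial B(0,2)$ (which then also lies within $6C_0\varepsilon$ of $x$).

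For the Poincaré bound $C_P(\Omega_\varepsilon)^2 \leq C^2\varepsilon^2$, I would use a standard perforated-domain argument: any $u \in H_0^1(\Omega_\varepsilon)$ vanishes on every admissible disk boundary, and by the net property combined with Property \eqref{eq:PepsilonBis}, each point of $\Omega_\varepsilon$ can be joined inside $\Omega_\varepsilon$ by a path of length $O(\varepsilon)$ to such a disk; integrating $\nabla u$ along those paths and combining Cauchy-Schwarz with Fubini delivers the required estimate. Alternatively, one verifies the thinness hypothesis of \Cref{lemma:poincarecarre} at scale $\varepsilon$ in every $1/2$-side square, using \eqref{eq:PepsilonBis} and the dense admissible grid.

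The main obstacle is the behaviour near the isolated points $x_{\max}$ and $0$, where admissible disks are forbidden inside a $(C_0+1)\varepsilon$-ball: one must check that the slack encoded in the factor $6$ in the net radius suffices to preserve both the net property and the Poincaré chain argument without disrupting the geometry inherited from Property \eqref{eq:PepsilonBis}. This is the additional subtlety compared with \cite{LMNN20} and explains the constraint $C_0 \geq 5$.
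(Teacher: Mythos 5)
Your grid construction and the separation/net verifications are reasonable in outline: with mesh $m=(C_0+2)\varepsilon$, admissible lattice points give disks that are $C_0\varepsilon$-separated from each other and from the forbidden set, and the two-step search over lattice neighbours handles the isolated obstructions $\{x_{\max},0\}$ for $C_0\geq 5$ (you are loose on constants — you write $3C_0\varepsilon$ where it is $(3C_0+1)\varepsilon$, etc. — but the margin built into the factor $6$ absorbs this). The paper gives no proof of this lemma, only a citation to LMNN20, Section 3.1 and ELB22, Lemma 2.10, so the real question is whether your arguments actually close; the sticking point is the Poincar\'e bound.

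Both routes you propose for $C_P(\Omega_\varepsilon)^2\leq C^2\varepsilon^2$ have gaps. The path-integral route is not just ``integrate $\nabla u$ along an $O(\varepsilon)$-path to the vanishing set and Fubini'': that scheme needs the target set to have positive capacity at scale $\varepsilon$ so that the paths can be organized without a log-divergent overlap. Proximity alone is insufficient (if $Z$ were a single point at distance $\varepsilon$ from everything, your argument would falsely yield a Poincar\'e bound, but a point has zero capacity and $H_0^1$ does not see it). For $F_\varepsilon$ capacity is automatic since the holes are full $\varepsilon$-disks. For $Z$ this is precisely where Property \eqref{eq:PepsilonBis} must enter quantitatively — you cite it but do not use it: one has to extract from $P_\varepsilon$ that $(Z\cup\partial B_2)\cap\overline{B(x_0,\rho)}$ has diameter at least $\rho$ for every $\rho<\varepsilon$ and every $x_0\in Z$, hence logarithmic capacity bounded below by $c\rho$, and only then run a local Poincar\'e-with-holes estimate on an $O(\varepsilon)$-cover and sum.

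The second route is the wrong criterion altogether. \Cref{lemma:poincarecarre} hypothesizes $|\Omega\cap Q|\leq k^2$ for every $1/2$-side square $Q$, i.e.\ that the \emph{open set itself} has small measure at unit scale. In the paper this is invoked inside the De Giorgi iteration where $\Omega$ is a superlevel set of small measure. Here $\Omega_\varepsilon = B_2\setminus(Z\cup F_\varepsilon)$ has $|\Omega_\varepsilon\cap Q|$ of order $|Q|=1/4$ — the disks remove only $O(\varepsilon^2)$ of area per square and $Z$ has measure zero — so the hypothesis of \Cref{lemma:poincarecarre} fails at every scale, and rescaling does not help. What is actually needed is the complementary statement, that $C_P(\Omega)^2\leq C\delta^2$ when $\R^2\setminus\Omega$ is a $\delta$-net whose intersection with every $\delta$-ball carries capacity comparable to $\delta$; this capacity-net lemma is the content of LMNN20, Section 3.1 and is distinct from Corollary 6.9. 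Without it, the third bullet of the lemma is not established by your argument.
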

%An important remark to avoid difficulty in the next is to do the perforation process avoiding also the finite number of points of the critical set, we know that this consits of isolated singular points and if there is an infinite number then by compactness one can obtain a contradiction. This is the what we will do next.
%
%The conclusion of this step is that we have perforated the domain, avoiding the nodal set and such that the resulted domain is of Poincaré constant very small, quantified in $\varepsilon$, that is still a free parameter but such that 
%\begin{equation}
%\varepsilon A,\ \varepsilon^2 B \leq c.
%\end{equation}
%Near the disks of the perforation, the solution $u$ does not change of sign.
%
%And now the goal is to construct the positive multiplier. 

\textbf{Setting of parameters.} In the sequel, it will be useful to choose a very large $C_0$. For simplicity, from now on, we set $C_0 = 18 \cdot 32^2 $. This choice will be made clearer later. \\

In Figure \ref{fig:Perforation}, we have represented the perforated domain. Note that from \cite{HS89}, assuming that $W_1 \in W^{1, \infty}(B_2)$, the nodal set of $u$ is a union of smooth curves. The picture illustrates in particular this structural result, but it is worth mentioning that in this paper we will not use it.

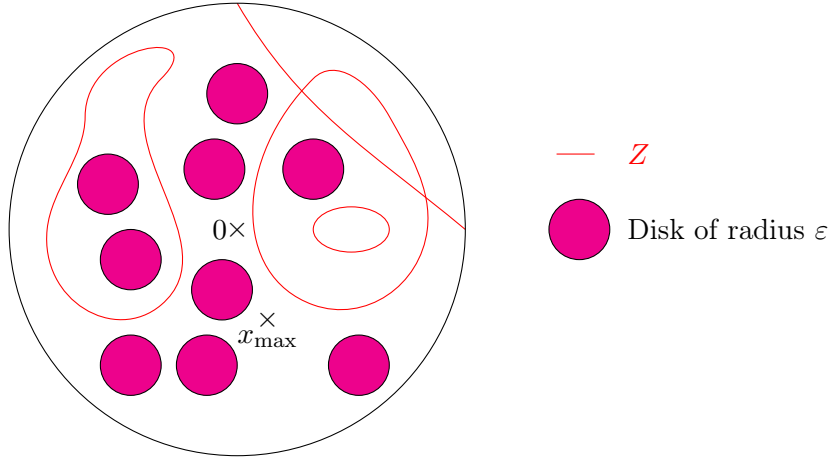
\begin{figure}
\centering
\begin{tikzpicture}
\coordinate (N) at (-1,2);
\coordinate (O) at (-2,1.5);
\coordinate (P) at (-2.5,0);
\coordinate (Q) at (-1,-1);
\coordinate (R) at (-1.5,0);
\coordinate (S) at (-1.6,0.1);
\coordinate (T) at (-1.4,0.2);
\coordinate (U) at (-1.2,0.3);
\coordinate (Na) at (1,2);
\coordinate (Oa) at (2,1.5);
\coordinate (Pa) at (2.5,0);
\coordinate (Qa) at (1,-1);
\coordinate (Nb) at (3.0,0);
\coordinate (Ob) at (2.0,1);
\coordinate (Pb) at (2.0,2);
\coordinate (Qb) at (0,3);
 \draw[red] (N) to [closed, curve through = {(N) (O)  (P)  (Q)}] (N);
 \draw[red] (Na) to [closed, curve through = {(Na) (Oa)  (Pa)  (Qa)}] (Na);
%\draw [red]  (Qb) -- (Nb);
\draw  [red]  (Qb) to[out=-60,in=140] (Nb);
 \draw[red] (1.5,0) ellipse (0.5 and 0.3);
\draw (0,0) circle (3);
\draw[fill=magenta] (0,1.8) circle (0.4);
\draw[fill=magenta] (-0.3,0.8) circle (0.4);
\draw[fill=magenta] (1.0,0.8) circle (0.4);
\draw[fill=magenta] (-0.2,-0.8) circle (0.4);
\draw[fill=magenta] (-0.4,-1.8) circle (0.4);
\draw[fill=magenta] (-1.4,-1.8) circle (0.4);
\draw[fill=magenta] (-1.4,-0.4) circle (0.4);
\draw[fill=magenta] (-1.7,0.6) circle (0.4);
\draw[fill=magenta] (1.6,-1.8) circle (0.4);
\draw (0,0) node[left]{$0$};
\draw (0,0) node {$\times$};
\draw (0.4,-1.2) node [below]{$x_{\max}$};
\draw (0.4,-1.2) node {$\times$};
\draw[fill=magenta] (4.5,0) circle (0.4);
\draw (5.0,0) node[right]{Disk of radius $\varepsilon$};
\draw[red] (4.2,1.0) -- (4.7,1.0);
\draw[red] (5.0,1.0) node[right]{$Z$};
\end{tikzpicture}
\caption{The perforation process with $\Omega_{\varepsilon} = B(0,2) \setminus (\textcolor{red}{Z} \cup \textcolor{magenta}{F_{\varepsilon}})$.}
\label{fig:Perforation}
\end{figure}

\subsection{Construction of the positive multiplier}

\textbf{Setting of parameters.} Note that now, $p$ is as in \eqref{eq:deffirstp} and $\varepsilon>0$ is still a free parameter satisfying
\begin{equation}
	\label{eq:epsPetitPoincareGeneralFinStep1}
	\varepsilon + \varepsilon^{2/(2+\delta)} \norme{W_1}_{L^{\infty}(B_2)} + \varepsilon^{2/(2+\delta)} \norme{W_2}_{L^{\infty}(B_2)} + \varepsilon^2 \norme{V}_{L^{\infty}(B_2)} \leq c,
\end{equation} 
where $c>0$ is a small positive constant depending on the constant $C$ that appears in \Cref{lem:constructionperforated}.\\

We have the following result, that is the main result of this Step 1.
\begin{prop}
\label{prop:constructionmultiplier}
Let $\Omega_{\varepsilon}$ be as in \Cref{lem:constructionperforated}. There exists $\varphi \in H^1(\Omega_{\varepsilon})$ such that
	\begin{equation}
		\label{Eq-varphiOeps}
				-\Delta \varphi - \nabla \cdot (W_2 \varphi) +  W_1 \cdot \nabla \varphi +   V \varphi = 0  \text{ in } \Omega_{\varepsilon}, 
	\end{equation}
	and $ \tilde{\varphi} = \varphi - 1$ satisfies 
	\begin{equation}
		\label{Est-Size-tilde-varphiOeps}
		\tilde{\varphi} \in H_0^1(\Omega_{\varepsilon})\text{ and }  \norme{\tilde{\varphi}}_{\infty} \leq C  \left( \varepsilon^{2/(2+\delta)} \norme{W_2}_{L^{\infty}(B_2)} + \varepsilon^2 \norme{V}_{L^{\infty}(B_2)}\right).
	\end{equation}
\end{prop}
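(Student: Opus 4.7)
The statement is essentially an application of \Cref{lem:varphidiv} to the perforated domain $\Omega_\varepsilon$ produced by \Cref{lem:constructionperforated}, after swapping the roles of the two drift terms. The plan is the following.

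First, I would observe that the PDE \eqref{Eq-varphiOeps} is obtained from \eqref{Eq-varphiNodal} by the substitution $W_1 \leftrightarrow W_2$. I therefore propose to invoke \Cref{lem:varphidiv} on $\Omega = \Omega_\varepsilon$ with the drifts $W_1' := W_2$ and $W_2' := W_1$, the same potential $V$, and exponent $p = 2+\delta$ as fixed in \eqref{eq:deffirstp}. The hypothesis on the Poincaré constant is exactly what \Cref{lem:constructionperforated} provides: $C_P(\Omega_\varepsilon)^2 \leq C^2 \varepsilon^2$ for a constant $C$ depending only on the choice $C_0 = 18\cdot 32^2$.

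Next, I would verify the smallness condition \eqref{eq:epsPetitPoincareGeneralpreuve} in the form needed after the swap, namely
\begin{equation*}
\varepsilon + \varepsilon^{2/(2+\delta)} \norme{W_2}_{L^\infty(B_2)} + \varepsilon \norme{W_1}_{L^\infty(B_2)} + \varepsilon^2 \norme{V}_{L^\infty(B_2)} \leq c.
\end{equation*}
Since $2/(2+\delta) < 1$, for $\varepsilon \leq 1$ we have $\varepsilon \leq \varepsilon^{2/(2+\delta)}$, so $\varepsilon \norme{W_1}_\infty \leq \varepsilon^{2/(2+\delta)}\norme{W_1}_\infty$. Hence the above condition is implied by the standing assumption \eqref{eq:epsPetitPoincareGeneralFinStep1} (after possibly shrinking $c$), which is the only place the symmetric appearance of $W_1$ and $W_2$ in \eqref{eq:epsPetitPoincareGeneralFinStep1} is used.

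\Cref{lem:varphidiv} then yields $\varphi \in H^1(\Omega_\varepsilon)$ solving \eqref{Eq-varphiOeps} with $\tilde\varphi = \varphi - 1 \in H_0^1(\Omega_\varepsilon)$ and
\begin{equation*}
\norme{\tilde\varphi}_\infty \leq C\bigl(\varepsilon^{2/(2+\delta)} |\Omega_\varepsilon|^{\delta/(2(2+\delta))} \norme{W_2}_{L^\infty(B_2)} + \varepsilon^2 \norme{V}_{L^\infty(B_2)}\bigr).
\end{equation*}
Since $\Omega_\varepsilon \subset B_2$, we have $|\Omega_\varepsilon|^{\delta/(2(2+\delta))} \leq (4\pi)^{\delta/(2(2+\delta))}$, which absorbs into the constant $C$ and produces precisely \eqref{Est-Size-tilde-varphiOeps}.

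There is essentially no obstacle to overcome here beyond bookkeeping: the whole content of the proposition is packaged into \Cref{lem:varphidiv}, and the only points to check are that the perforation in \Cref{lem:constructionperforated} provides a domain with the required small Poincaré constant, that the smallness assumption \eqref{eq:epsPetitPoincareGeneralFinStep1} is strong enough after swapping the drifts, and that the $|\Omega|$-dependence in \eqref{Est-Size-tilde-varphiNodal} is harmless because $\Omega_\varepsilon$ is contained in a fixed ball.
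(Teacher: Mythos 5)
Your proof is correct and follows exactly the same route as the paper: invoke \Cref{lem:varphidiv} on $\Omega_\varepsilon$ with the roles of $W_1$ and $W_2$ reversed, check that the standing smallness assumption \eqref{eq:epsPetitPoincareGeneralFinStep1} implies the hypothesis \eqref{eq:epsPetitPoincareGeneralpreuve} after the swap, and absorb the harmless factor $|\Omega_\varepsilon|^{\delta/(2(2+\delta))}\le (4\pi)^{\delta/(2(2+\delta))}$ into the constant. The paper states this as a one-line ``direct application''; your write-up just spells out the bookkeeping explicitly.
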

\begin{proof}
It is a direct application of \Cref{lem:varphidiv} to $\Omega = \Omega_{\varepsilon}$, reversing the role of $W_1$ and $W_2$, using \eqref{eq:epsPetitPoincareGeneralFinStep1}.
\end{proof}

\section{Step 2: Reduction to a non-homogeneous $\partial_{\overline{z}}$-equation}
\label{sec:reductiondzbar}

The goal of this step is to use the multiplier $\varphi$, defined in $\Omega_{\varepsilon}$ of the previous Step, introduced in \Cref{prop:constructionmultiplier} to transform first the equation \eqref{eq:equationuLandisquantitativelocal} in a divergence elliptic equation in a subset of $B_2$. Then, by using a quasiconformal change of variable, we will recast this divergence elliptic equation into an elliptic equation of the form $-\Delta h - \nabla \cdot(\tilde{W} h) = 0$. Finally, by an approximate type Poincaré lemma and Cauchy transformation, we will be able to simplify the last equation to a non-homogeneous $\partial_{\overline{z}}$-equation.

\subsection{The new equation satisfied by $v = u/\varphi$}

The first step is to rewrite the elliptic problem $-\Delta u - \nabla \cdot ( W_1 u) + W_2 \cdot \nabla u + V u = 0$ in $B_2$ into an equation of divergence form. 

Unfortunately, we are not able to do it in the whole set $B_2$ directly, but only in the set 
\begin{equation}
	\label{Def-Om-Eps-'}
	\Omega_\varepsilon' = B_2 \setminus F_\varepsilon,  
\end{equation}
i.e. a set which is slightly larger than the set $\Omega_\varepsilon=  B_2\setminus (Z \cup F_{\varepsilon})$ defined in \eqref{Omega-Eps}.

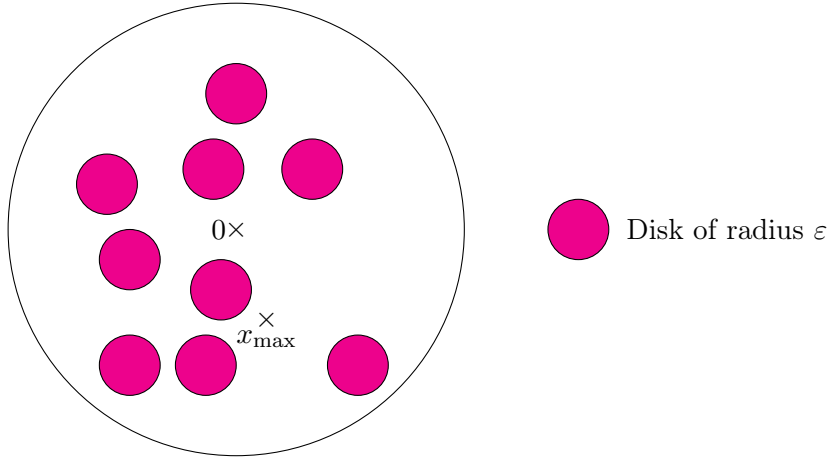
\begin{figure}
\centering
\begin{tikzpicture}
\draw (0,0) circle (3);
\draw[fill=magenta] (0,1.8) circle (0.4);
\draw[fill=magenta] (-0.3,0.8) circle (0.4);
\draw[fill=magenta] (1.0,0.8) circle (0.4);
\draw[fill=magenta] (-0.2,-0.8) circle (0.4);
\draw[fill=magenta] (-0.4,-1.8) circle (0.4);
\draw[fill=magenta] (-1.4,-1.8) circle (0.4);
\draw[fill=magenta] (-1.4,-0.4) circle (0.4);
\draw[fill=magenta] (-1.7,0.6) circle (0.4);
\draw[fill=magenta] (1.6,-1.8) circle (0.4);
\draw (0,0) node[left]{$0$};
\draw (0,0) node {$\times$};
\draw (0.4,-1.2) node [below]{$x_{\max}$};
\draw (0.4,-1.2) node {$\times$};
\draw[fill=magenta] (4.5,0) circle (0.4);
\draw (5.0,0) node[right]{Disk of radius $\varepsilon$};
\end{tikzpicture}
\caption{The set $\Omega_{\varepsilon}' = B(0,2) \setminus   \textcolor{magenta}{F_{\varepsilon}}$.}
\label{fig:PerforationBis}
\end{figure}

Using the equation of $\varphi$ in \eqref{Eq-varphiOeps}, it is clear that, setting $v = u / \varphi$ in $\Omega_\varepsilon$, we have $-\nabla \cdot( \varphi^2 (\nabla v + \hat{W} v)) = 0$ in $\Omega_\varepsilon$ with $\hat{W} = W_1 - W_2$. Extend $\varphi$ by $1$ to $B_2$. In fact, since $\Omega_\varepsilon' = \Omega_\varepsilon \cup Z$, and $u$ vanishes on $Z$, an adaptation of \cite[Lemma 4.1]{LMNN20} yields that the equation $-\nabla \cdot( \varphi^2 (\nabla v + \hat{W} v)) = 0$ also holds in $\Omega_\varepsilon'$. To be more precise, we get the following result.

\begin{lemma}
	\label{lem:divergenceperforated}
	The function $v$ defined in $\Omega_\varepsilon'$ by
	\begin{equation}
		\label{Def-v}
		v := \frac{u}{\varphi} \text{ in } \Omega_\varepsilon',
	\end{equation}
	belongs to $H^1(\Omega_\varepsilon')$ and satisfies in the weak sense
\begin{equation}
\label{eq:equationv}
-\nabla \cdot( \varphi^2 (\nabla v + \hat{W} v)) = 0\ \text{in}\ \Omega_{\varepsilon}',
\end{equation}
with
\begin{equation}
\label{eq:defW}
\hat{W} = W_1 - W_2.
\end{equation}
\end{lemma}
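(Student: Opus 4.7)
The plan is to derive \eqref{eq:equationv} by combining the weak formulations of the equation satisfied by $u$ (valid in $B_2$) and of \eqref{Eq-varphiOeps} satisfied by $\varphi$ (valid only in $\Omega_\varepsilon$) against carefully chosen test functions, so that the zero-order term $V$ and the symmetric first-order contributions involving $W_1$ and $W_2$ cancel.

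First I would settle the regularity. For $\varepsilon$ small enough in \eqref{eq:epsPetitPoincareGeneralFinStep1}, estimate \eqref{Est-Size-tilde-varphiOeps} gives $1/2 \leq \varphi \leq 3/2$ a.e.\ in $\Omega_\varepsilon$, and since $\tilde\varphi \in H_0^1(\Omega_\varepsilon)$, extending $\varphi$ by $1$ across $Z$ yields $\varphi \in H^1(B_2)\cap L^\infty(B_2)$ with $\varphi\geq 1/2$ on $\Omega_\varepsilon'$. Elliptic regularity gives $u\in C^0(B_2)$, see \Cref{cor:nodalset}, so $v=u/\varphi$ is well defined on $\Omega_\varepsilon'$, vanishes continuously on $Z$, and belongs to $H^1(\Omega_\varepsilon')$ with $\nabla v = (\varphi\nabla u - u\nabla\varphi)/\varphi^2$. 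A short algebraic check then yields the key pointwise identity
\begin{equation*}
\varphi^2(\nabla v + \hat{W} v) \;=\; \varphi\,\nabla u - u\,\nabla\varphi + \varphi\,\hat{W}\, u \quad\text{a.e.\ in }\Omega_\varepsilon',
\end{equation*}
so \eqref{eq:equationv} reduces to showing that, for every $\psi\in C_c^\infty(\Omega_\varepsilon')$,
\begin{equation*}
\int_{B_2}\bigl(\varphi\,\nabla u - u\,\nabla\varphi + \varphi\,\hat W\, u\bigr)\cdot\nabla\psi\,dx \;=\; 0.
\end{equation*}

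To obtain this, I would test \eqref{eq:equationuLandisquantitativelocal} against $\varphi\psi\in H_0^1(B_2)$ and \eqref{Eq-varphiOeps} against $u\psi$, expand $\nabla(\varphi\psi)$ and $\nabla(u\psi)$ by the Leibniz rule, and subtract the two identities. The $V$-contributions match and cancel; the pair $\int (W_1 u)\cdot\psi\nabla\varphi$ (from the $u$-equation) coincides with $\int (W_1\cdot\nabla\varphi)\,u\psi$ (from the $\varphi$-equation), and symmetrically $\int (W_2\cdot\nabla u)\,\varphi\psi$ coincides with $\int (W_2\varphi)\cdot\psi\nabla u$, so both pairs cancel. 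The crossed Laplace terms $\int \psi\,\nabla u\cdot\nabla\varphi$ cancel with themselves. What survives is precisely the displayed integral.

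The main obstacle is to justify that $u\psi$ is a legitimate test function for \eqref{Eq-varphiOeps}, i.e.\ that $u\psi\in H_0^1(\Omega_\varepsilon)$, and this is the step where an adaptation of \cite[Lemma 4.1]{LMNN20} to the present drift setting is required. Since $\psi$ is compactly supported in $\Omega_\varepsilon' = B_2\setminus F_\varepsilon$, the product $u\psi$ already vanishes in a neighbourhood of $F_\varepsilon\cup\partial B_2$, so only the boundary portion $Z$ is delicate. My plan is to replace $u$ by the truncation $u_\eta := (|u|-\eta)^+\,\sign(u)\in H^1(B_2)$: by continuity of $u$ and the definition $Z=\{u=0\}$, the closure of $\{|u|>\eta\}$ is disjoint from $Z$, so $u_\eta\psi\in H_0^1(\Omega_\varepsilon)$ and is an admissible test function. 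Testing \eqref{Eq-varphiOeps} against $u_\eta\psi$ and \eqref{eq:equationuLandisquantitativelocal} against $\varphi\psi$, performing the cancellations above, and then passing to the limit $\eta\to 0^+$ by dominated convergence (using $u_\eta\to u$ in $H^1_{\mathrm{loc}}$) produces the desired weak identity, and hence \eqref{eq:equationv}.
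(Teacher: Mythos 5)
Your argument is correct and follows the same overall route as the paper: extend $\varphi$ by $1$ across $Z$, test the $u$-equation against $\varphi\psi$ and the $\varphi$-equation against $u\psi$, and observe that the $V$-terms, the mixed $\nabla u\cdot\nabla\varphi$-terms, and the two pairs of symmetric first-order contributions cancel, leaving exactly the divergence-form identity. The one place where you diverge from the paper is the justification that $u\psi$ is an admissible test function for \eqref{Eq-varphiOeps}. The paper invokes directly the characterization of $H_0^1$ via continuity and vanishing on the boundary (\cite[Theorem 9.17]{Bre11}), arguing that $u\psi\in H^1(\Omega_\varepsilon)\cap C^0(\overline{\Omega_\varepsilon})$ and $u\psi=0$ on $\partial\Omega_\varepsilon$. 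You instead replace $u$ by the Lipschitz truncation $u_\eta=(|u|-\eta)^+\sign(u)$, whose support is closed and disjoint from $Z$, so that $u_\eta\psi$ is compactly supported in $\Omega_\varepsilon$ and trivially in $H_0^1(\Omega_\varepsilon)$, and then pass to the limit $\eta\to 0^+$. Both are valid; your route is a touch more elementary and self-contained (it does not rely on a boundary-trace characterization of $H_0^1$, only on the stability of $H^1$ under composition with Lipschitz functions and the fact that $\nabla u=0$ a.e.\ on $\{u=0\}$), whereas the paper's is shorter. One small remark: when you pass to the limit, the $H^1_{\mathrm{loc}}$ convergence of $u_\eta$ to $u$ is not pure dominated convergence for the gradients — it uses $\nabla u_\eta=\mathbf{1}_{\{|u|>\eta\}}\nabla u$ together with the Stampacchia fact $\nabla u=0$ a.e.\ on $Z$, which is also the fact the paper invokes at the analogous point, so it is worth stating explicitly.
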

Note that the computations take care of what happens through the nodal set of $u$, i.e. $Z$.

\begin{proof}
We need to prove that for every $h \in C_0^{\infty}(\Omega_{\varepsilon}')$,
\begin{equation}
\label{eq:weakformulationv}
\int_{\Omega_{\varepsilon}'} \varphi^2 \nabla v \cdot \nabla h + \int_{\Omega_{\varepsilon}'} \varphi^2 v \hat{W} \cdot \nabla h = 0.
\end{equation}
Moreover, by \cite[Proposition 9.4]{Bre11}, we have in the weak sense that
\begin{equation}
\nabla \varphi = \nabla \varphi 1_{\Omega_{\varepsilon}},\  \nabla v = \frac{\nabla u}{\varphi} - \frac{u \nabla \varphi}{\varphi^2} 1_{\Omega_{\varepsilon}}\ \text{in}\ \Omega_{\varepsilon}',
\end{equation}
so we need to prove
\begin{equation}
\label{eq:weakformulationvBis}
\int_{\Omega_{\varepsilon}'} \left(\varphi \nabla u - u \nabla \varphi 1_{\Omega_{\varepsilon}}\right) \cdot \nabla h + \int_{\Omega_{\varepsilon}'} \varphi u \hat{W} \cdot \nabla h = 0.
\end{equation}

From the equation satisfied by $u$, and the fact that $h \varphi$ belongs to $H^1(B_2)$, we have that
\begin{equation}
\label{eq:weakformulationu}
\int_{\Omega_{\varepsilon}'} \nabla u \cdot \nabla \left(h \varphi \right) + \int_{\Omega_{\varepsilon}'} W_1 u \cdot \nabla (h \varphi) + \int_{\Omega_{\varepsilon}'} (W_2\cdot\nabla u) h \varphi  + V u \left(h \varphi\right) = 0.
\end{equation}
Then, 
\begin{multline}
\label{eq:weakformulation}
\int_{\Omega_{\varepsilon}'} \left(\varphi \nabla u \cdot \nabla h\right) + \int_{\Omega_{\varepsilon}'} \nabla u \cdot \left( \nabla \varphi1_{\Omega_{\varepsilon}}\right) h +  \int_{\Omega_{\varepsilon}'} W_1 u \cdot (\varphi \nabla h + h \nabla \varphi 1_{\Omega_{\varepsilon}}) \\
 +  \int_{\Omega_{\varepsilon}'}(W_2\cdot\nabla u) h \varphi +  \int_{\Omega_{\varepsilon}'}V u \left(h \varphi\right) = 0.
\end{multline}

Let us remark that $u h$ belongs to $H_0^1(\Omega_{\varepsilon})$ because $uh \in H^1(\Omega_{\varepsilon}) \cap C^0(\overline{\Omega_{\varepsilon}})$ and $uh = 0$ on $\partial\Omega_{\varepsilon}$, see \cite[Theorem 9.17, (i) $\Rightarrow$ (ii)]{Bre11}. So  one can use it as a test function in \eqref{Eq-varphiOeps} to get  
\begin{equation}
\label{eq:weakformulationvarphi}
\int_{\Omega_{\varepsilon}} \nabla \varphi \cdot \nabla \left(uh\right)+ \int_{\Omega_{\varepsilon}} (W_2 \varphi) \cdot \nabla \left(uh\right) + \int_{\Omega_{\varepsilon}}  (W_1 \cdot \nabla \varphi + V \varphi )\left(uh\right)= 0.
\end{equation}
Then, we subtract the two equations, recalling that $u = 0$ on $Z$ to get
\begin{multline}
\label{eq:weakformulationuvarphi}
\int_{\Omega_{\varepsilon}'} \left(\varphi \nabla u \cdot \nabla h\right) - u \nabla \varphi 1_{\Omega_{\varepsilon}} \cdot \nabla h  + \int_{\Omega_{\varepsilon}'} \nabla u \cdot \left( \nabla \varphi 1_{\Omega_{\varepsilon}}\right) h -  h \nabla u \cdot \nabla \varphi 1_{\Omega_{\varepsilon}}\\
+  \int_{\Omega_{\varepsilon}'} W_1 u \cdot (\varphi \nabla h ) + \int_{\Omega_{\varepsilon}'} W_1 u \cdot ( h \nabla \varphi 1_{\Omega_{\varepsilon}}) - W_1 \cdot \nabla \varphi 1_{\Omega_{\varepsilon}}  \left(uh \right) \\
+ \int_{\Omega_{\varepsilon}'} (W_2 \cdot \nabla u) h \varphi - \int_{\Omega_{\varepsilon}'} (W_2 \varphi) \cdot( u (\nabla h) \varphi 1_{\Omega_{\varepsilon}} + h \nabla u 1_{\Omega_{\varepsilon}}) \\
+ \int_{\Omega_{\varepsilon}'} V u \left(h \varphi\right) - V \varphi \left(uh\right)  1_{\Omega_{\varepsilon}} = 0.
\end{multline}
By using $u 1_{\Omega_{\varepsilon}'} = u 1_{\Omega_{\varepsilon}}$ and $\nabla u = 0$ almost everywhere on $Z$, see for instance \cite[Chapter 9, Comment 4 page 314]{Bre11}, we therefore deduce from \eqref{eq:weakformulationuvarphi} the expected weak formulation \eqref{eq:weakformulationvBis}, recalling the definition \eqref{eq:defW}. This concludes the proof.
\end{proof}
From now, given a vector field $B \in \R^2$, we will denote by $(B)_1$ and $(B)_2$ the first and the second coordinates of $B$, moreover we will also use the implicit identification of $B$ to the complex number $(B)_1 + i (B)_2$.

\subsection{Quasiconformal change of variable}

We then use the theory of quasiconformal mappings, which, roughly speaking, guarantees that solutions to homogeneous elliptic divergence equations behave as harmonic functions, see e.g.  \cite{AIM09}. Here, because of the drift term $\hat{W}$, we reduce the divergence elliptic equation satisfied by $v$ to a harmonic equation with a new drift term $\tilde{W}$.
\begin{lemma}
	\label{lem:harmonich}
	There exists an homeomorphic mapping $L$ of $ \overline{B(0,2)}$ into itself such that
	\\
\indent $\bullet$ $L \in H_{\mathrm{loc}}^1(B_2)$ satisfies the following Beltrami equation
	\begin{equation}
	\label{eq:equationL}
\partial_{\overline{z}} L = \mu \partial_z L\ \text{in}\ B_2,
	\end{equation}
	with $\mu \in L^{\infty}(B_2)$, satisfying $\mu = 0$ in $B_2 \setminus \Omega_{\varepsilon}'$ and
	\begin{equation}
\label{eq:beltramicoeff}
\mu = \frac{1-\varphi^2}{1+ \varphi^2} \cdot \frac{\partial_x v + i \partial_y v}{\partial_x v - i\partial_y v}\ \text{if}\ \nabla v \neq 0,\quad \mu = 0\ \text{if}\ \nabla v = 0\qquad \text{in}\ \Omega_{\varepsilon}',
\end{equation}
	\indent $\bullet$ $L$ is a $K$-quasiconformal mapping of $B_2$ into itself, with $K$ satisfying 
	\begin{equation}
	\label{K-est-quasiconf}
		1 \leq K \leq 1 + C  \left( \varepsilon^{2/(2+\delta)} \norme{W_2}_{L^{\infty}(B_2)} + \varepsilon^2 \norme{V}_{L^{\infty}(B_2)}\right),
	\end{equation}
	\indent $\bullet$ $L(0) = 0$,\\
	\indent$\bullet$ the function 
	\begin{equation}
	\label{eq:defh}
	h = v \circ L^{-1}\ \text{in}\ L(\Omega_{\varepsilon}'),
	\end{equation}
belongs to $H_{\mathrm{loc}}^1(L(\Omega_{\varepsilon}'))$ and satisfies in the weak sense
\begin{equation}
\label{eq:equationh}
-\Delta h - \nabla \cdot ( \tilde{W} h) = 0\ \text{in}\ L(\Omega_{\varepsilon}'),
\end{equation}	
with $\tilde{W}=((\tilde{W})_1, (\tilde{W})_2) \in L_{\mathrm{loc}}^2(B_2)$ defined as follows
\begin{align}
(\tilde{W})_1 &= 2 \left(\Re ( \partial_{\overline{z}} \overline{L^{-1}} ) \Re (\overset{\diamond}{W} \circ L^{-1}) - \Im ( \partial_{\overline{z}} \overline{L^{-1}} ) \Im (\overset{\diamond}{W} \circ L^{-1})\right),\label{eq:deftildeW1}\\
(\tilde{W})_2 &= 2 \left(\Re ( \partial_{\overline{z}} \overline{L^{-1}} ) \Im (\overset{\diamond}{W} \circ L^{-1}) + \Im ( \partial_{\overline{z}} \overline{L^{-1}} ) \Re (\overset{\diamond}{W} \circ L^{-1})\right). \label{eq:deftildeW2}
\end{align}
where
\begin{equation}
\label{eq:hatW}
\overset{\diamond}{W} =  \frac{ \varphi^2 (\hat{W})_1(1+\mu)}{2} + \frac{ i \varphi^2 (\hat{W})_2(1-\mu)}{2}.
\end{equation}
\end{lemma}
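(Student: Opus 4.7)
The plan is to produce $L$ via the measurable Riemann mapping theorem applied to the Beltrami coefficient $\mu$ defined in \eqref{eq:beltramicoeff}, and then to derive the equation for $h=v\circ L^{-1}$ by a Stoilow-type factorization argument adapted to account for the drift term $\hat{W}$.

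First, I would check that $\mu$ is admissible. Setting $\mu=0$ wherever $\nabla v$ vanishes and on $B_2\setminus\Omega_\varepsilon'$ makes $\mu$ measurable, and the elementary bound $|\mu|\le |1-\varphi^2|/(1+\varphi^2)$ combined with \eqref{Est-Size-tilde-varphiOeps} gives $\|\mu\|_\infty\le k$ with $k$ of order $C(\varepsilon^{2/(2+\delta)}\|W_2\|_{L^\infty(B_2)}+\varepsilon^2\|V\|_{L^\infty(B_2)})$. One may then invoke the measurable Riemann mapping theorem in the disk (in the form of \cite{AIM09}, say) to obtain a $K$-quasiconformal homeomorphism $L:\overline{B_2}\to\overline{B_2}$ with $L(0)=0$, $L\in H^1_{\mathrm{loc}}(B_2)$, satisfying \eqref{eq:equationL}, and with $K=(1+k)/(1-k)$, which yields \eqref{K-est-quasiconf}. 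Standard regularity for quasiconformal maps then ensures $L^{-1}\in W^{1,p}_{\mathrm{loc}}(B_2)$ for some $p>2$ depending on $K$; in particular $L^{-1}\in H^1_{\mathrm{loc}}$, which is needed for $\tilde{W}$ to lie in $L^2_{\mathrm{loc}}$ through \eqref{eq:deftildeW1}--\eqref{eq:deftildeW2}.

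Second, I would derive \eqref{eq:equationh}. Starting from the weak form of \eqref{eq:equationv}, I would test against $\psi=\eta\circ L$ with $\eta\in C_c^\infty(L(\Omega_\varepsilon'))$, then perform the change of variable $y=L(x)$, whose Jacobian is $J_L=|\partial_z L|^2-|\partial_{\overline{z}}L|^2=(1-|\mu|^2)|\partial_z L|^2$. The key algebraic identity, coming from the precise form of $\mu$ and from the fact that $v$ is real-valued (so $\partial_{\overline{z}}v=\overline{\partial_z v}$), is that after pull-back the term $\varphi^2\nabla v\cdot(DL)^T/J_L$ is exactly $\nabla h$; this is the Stoilow factorization content reformulated in weak form. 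The drift contribution $\varphi^2(\hat{W}v)\cdot(DL)^T/J_L$ is then computed by expanding in complex notation: writing $\hat{W}=(\hat{W})_1+i(\hat{W})_2$ and separating the ``holomorphic'' part of the change of variable (weighted by $(1+\mu)/2$) from its ``antiholomorphic'' part (weighted by $(1-\mu)/2$) produces exactly $\overset{\diamond}{W}$ as in \eqref{eq:hatW}; the further multiplication by $\overline{\partial_{\overline{z}}L^{-1}}$ coming from the inverse chain rule converts $\overset{\diamond}{W}\circ L^{-1}$ into the real vector field $\tilde{W}$ given by \eqref{eq:deftildeW1}--\eqref{eq:deftildeW2}.

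The main obstacle is this algebraic derivation of $\tilde{W}$. In the classical setting where $\hat{W}\equiv 0$, Stoilow's theorem directly produces a harmonic $h$ and nothing more is needed. Once the drift is present, the complexified field $\hat{W}v$ interacts non-trivially with the quasiconformal change of variable: the Beltrami coefficient twists the real and imaginary parts of $\hat{W}$ differently, and this is precisely the origin of the asymmetric weights $(1\pm\mu)/2$ appearing in \eqref{eq:hatW}. Tracking this carefully is the only non-routine point; the rest reduces to a standard change-of-variable computation combined with the regularity properties of quasiconformal maps, which guarantee that all integrals make sense on $L(\Omega_\varepsilon')$ and that \eqref{eq:equationh} holds in the weak distributional sense.
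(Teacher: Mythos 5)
Your proposal constructs $L$ by the same route as the paper (measurable Riemann mapping theorem applied to the Beltrami coefficient $\mu$ defined in \eqref{eq:beltramicoeff}, extended by zero, giving $K$-quasiconformality and the estimate \eqref{K-est-quasiconf} from \eqref{Est-Size-tilde-varphiOeps}), so that part is essentially the same. The derivation of \eqref{eq:equationh}, however, is genuinely different. The paper introduces a local stream function $\tilde v$ on balls $B\subset\Omega_\varepsilon'$, forms $w=v+i\tilde v$, shows that $w$ solves the non-homogeneous Beltrami equation $\partial_{\overline z}w=\mu\partial_z w-\overset{\diamond}{W}v$, and then applies the complex chain rule to $w\circ L^{-1}$ together with the Beltrami equation $-\partial_{\overline z}L^{-1}=(\mu\circ L^{-1})\overline{\partial_z L^{-1}}$; taking real and imaginary parts yields \eqref{eq:equationh}. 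You instead work directly in the weak form of \eqref{eq:equationv}, test against $\eta\circ L$, and change variables through $L$. This has the mild advantage of bypassing the multiple-connectivity issue that forces the paper to argue ball by ball, since the weak formulation is global.

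That said, your proof as written leaves the decisive point unverified. The claim that ``after pull-back the term $\varphi^2\nabla v\cdot(DL)^T/J_L$ is exactly $\nabla h$'' is not a generic change-of-variable identity: it is equivalent to the statement that $\varphi^2(DL)\nabla v=\mathrm{cof}(DL)\nabla v$ pointwise, i.e.\ that $\nabla v$ lies in the eigendirection of $(DL)(DL)^T$ with eigenvalue $(|\partial_z L|+|\partial_{\overline z}L|)^2$, and this only holds because the Beltrami coefficient was tied to $\nabla v$ through the specific phase factor $\frac{\partial_x v+i\partial_y v}{\partial_x v-i\partial_y v}$ in \eqref{eq:beltramicoeff}. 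In complex notation it reduces to $\partial_{\overline z}L/\partial_z L=\frac{(1-\varphi^2)\overline{\partial_z v}}{(1+\varphi^2)\partial_z v}$, which is precisely \eqref{eq:beltramicoeff}; similarly the drift term $\frac{\varphi^2(DL)\hat W}{J_L}$ pulls back to $\tilde W$ once one expresses $\partial_{\overline z}\overline{L^{-1}}\circ L=\partial_z L/J_L$ and unpacks \eqref{eq:deftildeW1}--\eqref{eq:deftildeW2}. These verifications are exactly the content that the paper encodes, more transparently, in the stream-function/Beltrami step: the identity is automatic once one knows that $w$ solves $\partial_{\overline z}w=\mu\partial_z w-\overset{\diamond}{W}v$ and that Stoilow-type composition preserves this structure. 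So your route would work, but as a proof it would need to carry out the algebra (as well as justify the change of variable for a quasiconformal $L$ with only Sobolev regularity), rather than assert the identity; as it stands, that gap is the non-routine point and it is the one the paper's stream-function argument is precisely designed to avoid.
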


\begin{proof}
Let us first consider a ball $B$ of $\R^2$ contained in $\Omega_{\varepsilon}'$.

By Poincaré lemma, see for instance \cite[Section 6.5]{LMNN20}, one can then find a function $\tilde{v} \in H_{\mathrm{loc}}^1(B)$ such that $\varphi^2( \partial_x v + (\hat{W})_{1} v) = \partial_y \tilde{v}\ \text{and}\  \varphi^2( \partial_y v + (\hat{W})_{2} v) = -\partial_x \tilde{v}$. Setting $w = v + i \tilde{v}$, we easily check that $w$ is a solution to the Beltrami equation
\begin{equation}
\label{eq:beltramiequation-in-B}
	\partial_{\overline{z}} w = \mu \partial_{z} w - \overset{\diamond}{W} v \text{ in } B,
\end{equation}
with the Beltrami coefficient $\mu$ defined in \eqref{eq:beltramicoeff} and with $\overset{\diamond}{W}$ defined in \eqref{eq:hatW}.

Note that, since $\Omega_\varepsilon'$ is not simply connected, $w$ and $\tilde v$ cannot be a priori defined in the whole set $\Omega_\varepsilon'$. However, since $v$ is well-defined in $\Omega_\varepsilon'$, we can safely define the Beltrami coefficient $\mu$ by \eqref{eq:beltramicoeff} in $\Omega_{\varepsilon}'$, and we further have, by \eqref{Est-Size-tilde-varphiOeps}, 
%We would like to apply this result to $\hat v$ in \eqref{•}f{Def-hat-v}, which satisfies the equation \eqref{Eq-Hat-v} in the domain $\Omega_{\varepsilon}'$, which is not simply connected. In this case, $w$ and $\tilde{v}$ can thus be defined only locally, and not in the whole $\Omega_{\varepsilon}'$. However, the Beltrami coefficient $\mu$ is well-defined by the equation \eqref{eq:beltramicoeff} in $\Omega_{\varepsilon}'$ and we have from \eqref{Est-Size-tilde-varphi}
%
\begin{equation}
\label{eq:estimatemu}
	\norme{\mu}_{L^\infty(\Omega_\varepsilon')} \leq 
	\norme{\frac{1-\varphi^2}{1+ \varphi^2}}_{L^\infty(\Omega_\varepsilon')} \leq C  \left( \varepsilon^{2/(2+\delta)} \norme{W_2}_{L^{\infty}(B_2)} + \varepsilon^2 \norme{V}_{L^{\infty}(B_2)}\right).
\end{equation}
We then extend $\mu$ by zero outside $\Omega_{\varepsilon}'$ to the whole complex plane, and remark that $\mu$ has compact support. 

We then use \cite[Theorem 5.3.2]{AIM09} to obtain the existence of a $K$-quasiconformal homeomorphism $\Psi$ of the complex plane such that $\Psi \in H_{\mathrm{loc}}^1(\C)$, $\Psi$ satisfies the Beltrami equation 
\begin{equation}
\label{eq:beltramiequation-in-C}
	\partial_{\overline{z}} \Psi = \mu \partial_{\overline{z}} \Psi \text{ in } \C,
\end{equation}
and $K = \frac{1 + \sup |\mu|}{1-\sup|\mu|}$. In our case, according to \eqref{eq:estimatemu}, we have \eqref{K-est-quasiconf}.

Since $\Psi(B_2)$ is a simply connected domain which does not fill the whole plane, by Riemann mapping theorem, see \cite[Theorem 8.2]{Bel15}, there exists $\alpha : \overline{\Psi(B_2)} \to \overline{B_2}$, one-to-one, such that $\alpha$ is holomorphic in $\Psi(B_2)$ and $\alpha(\Psi(0)) = 0$. The mapping $L =\alpha \circ \Psi $ from $B_2$ onto itself is a $K$-quasiconformal mapping from $B_2$ onto itself with $L(0)=0$. Actually, we have
\begin{equation}
\partial_{\overline{z}} L = (\partial_z \alpha \circ \Psi) \partial_{\overline{z}} \Psi + (\partial_{\overline{z}} \alpha \circ \Psi) \partial_{\overline{z}} \overline{\Psi}  \underset{\partial_{\overline{z}} \alpha = 0}{=} (\partial_z \alpha \circ \Psi) \mu \partial_{z} \Psi,
\end{equation}
and
\begin{equation}
\partial_z L = (\partial_z \alpha \circ \Psi) \partial_{z} \Psi + (\partial_{\overline{z}} \alpha \circ \Psi) \partial_{z} \overline{\Psi} = (\partial_z \alpha \circ \Psi) \partial_{z} \Psi.
\end{equation}
So, \eqref{eq:equationL} and \eqref{K-est-quasiconf} hold true.

Now, let us check the equation satisfied by $h = v \circ L^{-1}$ in $L(\Omega_{\varepsilon}')$. Let us consider a ball $B$ of $\R^2$ contained in $\Omega_{\varepsilon}'$ and let us define $\tilde{v} \in H_{\mathrm{loc}}^1(B)$ as before. Here, we use the complex chain rule and the fact that $L^{-1}$ satisfies the following Beltrami equation
\begin{equation}
- \partial_{\overline{z}} L^{-1} = (\mu \circ L^{-1}) \overline{\partial_z L^{-1}}\ \text{in}\ L(B_2) = B_2,
\end{equation}
to obtain from \eqref{eq:beltramiequation-in-B}
\begin{align*}
\partial_{\overline{z}} (w \circ L^{-1}) &= (\partial_z w \circ L^{-1}) \partial_{\overline{z}} L^{-1} + (\partial_{\overline{z}} w \circ L^{-1}) \partial_{\overline{z}} \overline{L^{-1}} \\
&= (\partial_z w \circ L^{-1}) (- \mu \circ L^{-1} \overline{\partial_{z} L^{-1}})+ (\partial_{\overline{z}} w \circ L^{-1}) \partial_{\overline{z}} \overline{L^{-1}}\\
&= (\partial_z w \circ L^{-1}) (- \mu \circ L^{-1} \overline{\partial_{z} L^{-1}}) + [\mu \circ L^{-1} (\partial_z w \circ L^{-1}) - (\overset{\diamond}{W} \circ L^{-1} )( v \circ L^{-1})]\partial_{\overline{z}} \overline{L^{-1}}\\
\partial_{\overline{z}} (w \circ L^{-1})&= - (\partial_{\overline{z}} \overline{L^{-1}}) (\overset{\diamond}{W} \circ L^{-1} )( v \circ L^{-1})\  \text{in}\ L(B).
\end{align*}
So by taking the real part and the imaginary part in both sides of the equality, we obtain in $L(B)$,
\begin{equation}
\partial_{x} (v \circ L^{-1}) - \partial_{y} (\tilde{v} \circ L^{-1}) = 2 [-\Re ( \partial_{\overline{z}} \overline{L^{-1}} ) \Re (\overset{\diamond}{W} \circ L^{-1}) +\Im ( \partial_{\overline{z}} \overline{L^{-1}} ) \Im (\overset{\diamond}{W} \circ L^{-1})] v \circ L^{-1},
\end{equation}
\begin{equation}
\partial_{y} (v \circ L^{-1}) + \partial_{x} (\tilde{v} \circ L^{-1}) = 2 [-\Re ( \partial_{\overline{z}} \overline{L^{-1}} ) \Im (\overset{\diamond}{W} \circ L^{-1}) - \Im ( \partial_{\overline{z}} \overline{L^{-1}} ) \Re (\overset{\diamond}{W} \circ L^{-1})] v \circ L^{-1}.
\end{equation}
So $v \circ L^{-1}$ is a weak solution to
\begin{equation}
\label{eq:equationvL-1}
- \Delta (v \circ L^{-1}) - \nabla \cdot (\tilde{W} v \circ L^{-1}) = 0\ \text{in}\ L(B),
\end{equation}
with $\tilde{W}$ defined in \eqref{eq:deftildeW1}, \eqref{eq:deftildeW2}. To sum up, for every ball $B \subset \Omega_{\varepsilon}'$, the equation \eqref{eq:equationvL-1} is satisfied weakly in $L(B)$, so the equation \eqref{eq:equationh} is satisfied weakly in $L(\Omega_{\varepsilon}')$ because $L$ is an homeomorphism.
\end{proof}

%\begin{rmk}
%By elliptic regularity applied to \eqref{eq:equationh}, one can even prove that $h \in H_{\mathrm{loc}}^2(L(\Omega_{\varepsilon}'))$ and satisfies \eqref{eq:equationh} strongly. But actually, the rest of the proof does not use this extra regularity.
%\end{rmk}

We conclude this part by the {\color{black}analysis of the} distortion of distances through {\color{black}the quasiconformal mapping $L$}, which is precisely given by Mori's theorem, see \cite[Chapter III, Section C]{Ahl66}: for a $K$-\textcolor{black}{quasiconformal} mapping $L$ of $B(0,R)$ into itself, for all $z_1, \, z_2 \in B(0,R)$, 
\begin{equation}
\label{eq:Ldistance}
	\frac{1}{16} \left|\frac{z_1-z_2}{R}\right|^K \leq \frac{|L(z_1)-L(z_2)|}{R} \leq 16 \left|\frac{z_1-z_2}{R}\right|^{1/K}.
\end{equation}
Here, $R=2$.

Based on this result, it is not difficult to prove that the balls of $F_\varepsilon$ are not too much distorted by the map $L$, see the lemma afterwards. 

\begin{lemma}
\label{lem:imagequasiconformal}
	Let $L$ be the mapping as in \Cref{lem:harmonich}.

	There exist a positive constant $c>0$  {\color{black}(independent of $u$, $W_1$, $W_2$ and $V$)} such that for every $1 \leq \kappa < +\infty$, $c_0>0$, $\varepsilon >0$ satisfying 
	\begin{equation}
		\label{eq:varepsquasiconform}
		  \left( \varepsilon^{2/(2+\delta)} \norme{W_2}_{L^{\infty}(B_2)} + \varepsilon^2 \norme{V}_{L^{\infty}(B_2)}\right)
		\log\left( \frac{2}{\varepsilon} \right) \leq c,
	\end{equation}	
	\indent $\bullet$ $L$ satisfies the following Lipschitz property
	\begin{equation}
\label{eq:LipschitzL}
\frac{1}{32} |z_1-z_2| \leq |L(z_1)-L(z_2)| \leq 32  |z_1-z_2|\qquad \forall \varepsilon \leq |z_1-z_2|,\ z_1, z_2 \in B_2,
\end{equation}
	\indent $\bullet$ the images of the disks $B(x_j,\varepsilon)$ (recall the definition in Lemma \ref{lem:constructionperforated}) are contained in disks of the form $B(L(x_j), 32  \varepsilon)$, indexed by $j \in J$, that are $(C_0/32 - 64)\varepsilon$-separated from each other, from $L(Z)$, from $L(x_{\max})$ and from $L(0) = 0$,\\
	\indent $\bullet$ $L^{-1} \in W^{1,\kappa}(B_{2-c_0})$ and
	\begin{equation}
	\label{eq:gradientL}
	\norme{L^{-1}}_{W^{1,\kappa}(B_{2-c_0})} \leq C_{\kappa},
	\end{equation}
\indent $\bullet$	$L(B(0,r/2))$ contains $B(0,2r')$ with 
\begin{equation}
\label{eq:defr'}
r' = c r^2\ \text{if}\ r/2 \leq \varepsilon,\qquad r' = cr\ \text{if}\ r/2 > \varepsilon.
\end{equation}
	%
%	{\color{black}A-t-on vraiment besoin de cette estimée ci-après ?\\
%	}
%	Moreover, we have
%	\begin{equation}
%	\label{eq:imageVeps}
%	L(V_{\varepsilon}) \subset \{x \in \Omega\ ;\ \mathrm{dist}(x, \partial \Omega) \leq \varepsilon'\} =: V_{\varepsilon'}
%	\end{equation}
	%
\end{lemma}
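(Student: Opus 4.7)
The plan is to extract all four assertions from Mori's inequality \eqref{eq:Ldistance}, exploiting the smallness of $K-1$ coming from \eqref{K-est-quasiconf} and \eqref{eq:varepsquasiconform}. Indeed, \eqref{K-est-quasiconf} gives $K-1 \leq C(\varepsilon^{2/(2+\delta)} \|W_2\|_{L^{\infty}(B_2)} + \varepsilon^2 \|V\|_{L^{\infty}(B_2)})$, so \eqref{eq:varepsquasiconform} translates into $(K-1)\log(2/\varepsilon) \leq Cc$. This makes every Hölder correction factor of the form $t^{K-1}$ for $t \in [\varepsilon/2,2]$ lie in an interval $[1/2,2]$ provided $c$ is small enough, which will be used repeatedly.

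For the Lipschitz property \eqref{eq:LipschitzL}, I apply Mori directly to $L$: $|L(z_1)-L(z_2)| \leq 32(|z_1-z_2|/2)^{1/K}$, and rewrite the right-hand side as $16\,|z_1-z_2|\cdot(|z_1-z_2|/2)^{(1-K)/K}$. The last factor is controlled using $|z_1-z_2|\geq \varepsilon$ and the smallness of $(K-1)\log(2/\varepsilon)$, which yields the upper bound. The lower Lipschitz bound follows symmetrically by applying Mori to the $K$-quasiconformal map $L^{-1}$. For the images of the disks, taking any $z \in B(x_j,\varepsilon)$ gives $|L(z)-L(x_j)| \leq 32(\varepsilon/2)^{1/K} \leq 32\varepsilon$, since $(\varepsilon/2)^{1/K}\leq \varepsilon$ reduces to $\varepsilon^{K-1}\geq 1/2$, itself guaranteed by $(K-1)\log(2/\varepsilon)\leq \log 2$. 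The separation claim then follows from the lower Lipschitz bound applied to the $C_0\varepsilon$-separated centers from \Cref{lem:constructionperforated}: the image centers are at distance at least $C_0\varepsilon/32$, and subtracting the two radii $32\varepsilon$ leaves $(C_0/32 - 64)\varepsilon$.

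For the higher integrability \eqref{eq:gradientL}, I invoke Astala's theorem on the higher integrability of the gradient of a $K$-quasiconformal map: its derivative lies in $L^p_{\mathrm{loc}}$ for every $p < 2K/(K-1)$, with explicit quantitative bounds depending on $p, K$. Applied to the $K$-quasiconformal self-map $L^{-1}$ of $B_2$, we obtain $L^{-1}\in W^{1,\kappa}(B_{2-c_0})$ whenever $\kappa < 2K/(K-1)$. Since $(K-1) \leq Cc/\log(2/\varepsilon)$, choosing $c$ small enough (depending on $\kappa$) makes $K-1$ smaller than $2/(\kappa-2)$, ensuring the integrability threshold is met and yielding the bound $C_\kappa$.

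Finally, for the inclusion $L(B(0,r/2))\supset B(0,2r')$, I exploit $L(0)=0$ and Mori applied to $L^{-1}$: for $|w|\leq 2r'$ one has $|L^{-1}(w)|\leq 32(r')^{1/K}$, and requiring this be $\leq r/2$ reduces to $r' \leq (r/64)^K$. The dichotomy in \eqref{eq:defr'} then reflects how $r^K/r = r^{K-1}$ behaves: when $r/2>\varepsilon$, one has $r^{K-1}\geq (2\varepsilon)^{K-1}\geq e^{-Cc}$ by \eqref{eq:varepsquasiconform}, so $r^K \gtrsim r$ and the choice $r'=cr$ with $c$ small enough suffices; when $r/2\leq \varepsilon$, one writes $r^K = r^2 \cdot r^{K-2}$ with $r^{K-2}\geq (2\varepsilon)^{K-2} \gtrsim 1/(2\varepsilon)$ (diverging as $\varepsilon$ shrinks), so the quadratic scaling $r'=cr^2$ is forced and works.

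The main technical obstacle is bookkeeping: at each step one must propagate the smallness $(K-1)\log(2/\varepsilon)\leq Cc$ through the Hölder exponent $1/K$ in Mori's inequality and, for \eqref{eq:gradientL}, through the quantitative form of Astala's integrability theorem, verifying that the universal constant $c$ in \eqref{eq:varepsquasiconform} (possibly after shrinking depending on $\kappa$) makes every threshold achievable without circular dependence on $\varepsilon$, $c_0$, or the specific lower-order coefficients.
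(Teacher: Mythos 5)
Your proposal is correct and follows essentially the same route as the paper: Mori's inequality \eqref{eq:Ldistance} combined with the smallness $(K-1)\log(2/\varepsilon)\leq Cc$ drives the first, second, and fourth bullets exactly as in the paper's argument, and the fourth bullet's key observation that $r^{K-2}$ stays bounded below (so that $r'=cr^2$, respectively $r'=cr$, satisfies $r'\leq (r/64)^K$) is the same. The only minor deviation is the third bullet, where you invoke Astala's higher-integrability theorem ($Df\in L^p_{\mathrm{loc}}$ for $p<2K/(K-1)$) while the paper reaches the same $W^{1,\kappa}$ bound via the Caccioppoli estimate for the Beltrami equation $-\partial_{\overline{z}}L^{-1}=\mu\,\overline{\partial_z L^{-1}}$ from the same reference; both are valid here since $K-1$ can be made small enough relative to $\kappa$ under \eqref{eq:varepsquasiconform}.
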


\begin{proof}
Let us first prove the second estimate of \eqref{eq:LipschitzL}. Indeed, by the second inequality of \eqref{eq:Ldistance}, we have
\begin{equation}
|L(z_1)-L(z_2)| \leq 16 \cdot 2^{1-1/K} |z_1-z_2|^{1/K} \leq 16 \cdot 2^{1-1/K} \varepsilon^{1/K-1} |z_1-z_2| \leq 32 |z_1-z_2|,
\end{equation}
if, using \eqref{K-est-quasiconf} and the assumption \eqref{eq:varepsquasiconform}
\begin{equation}
\label{eq:proofhelpquasiconformal}
(K-1) \log\left(\frac{2}{\varepsilon} \right) \leq C  \left( \varepsilon^{2/(2+\delta)} \norme{W_2}_{L^{\infty}(B_2)} + \varepsilon^2 \norme{V}_{L^{\infty}(B_2)}\right) \log \left(\frac{2}{\varepsilon} \right) \leq \log(2).
\end{equation}
The reverse estimate writes in the same way.

To prove that the images of the disks $B(x_j, \varepsilon)$ are contained in disks of the form $B(L(x_j),32  \varepsilon)$, we proceed as follows, for $z \in B(x_j, \varepsilon)$, we have 
\begin{equation}
|L(z)-L(x_j)| \leq 16 \cdot 2^{1-1/K} \varepsilon^{1/K} \leq 16 \cdot 2^{1-1/K} \varepsilon^{1/K-1} \varepsilon \leq 32  \varepsilon.
\end{equation}
by \eqref{eq:proofhelpquasiconformal}.

 Moreover, by using the first Lipschitz estimate, and the fact that the $x_j$ are $C_0 \varepsilon$ separated, the centers $L(x_j)$ are $C_0 \varepsilon/32$ separated. Therefore the disks $B(L(x_j),32  \varepsilon)$ are thus $(C_0/32 - 2 \cdot 32)\varepsilon$ separated. Using similar arguments, we can also prove that the disks $B(L(x_j), 32  \varepsilon)_{j \in J}$ are $(C_0/32 - 64)\varepsilon$ separated from $L(Z)$, and from $L(x_{\max})$, and from $L(0)=0$.

Moreover, $L^{-1}$ satisfies the following Beltrami equation
\begin{equation}
- \partial_{\overline{z}} L^{-1} = \mu \overline{\partial_z L^{-1}}.
\end{equation}
Therefore, $L^{-1}$ is a $K$-quasiconformal mapping and then one can use Cacciopoli's estimates from \cite[Theorem 5.4.3]{AIM09} to get that $L^{-1} \in W^{1,\kappa}(B_{2-c_0})$ because of \eqref{K-est-quasiconf} and the assumption \eqref{eq:varepsquasiconform}, with 
	\begin{equation}
\norme{D L^{-1}}_{L^{\kappa}(B_{2-c_0})} \leq C  \norme{ L^{-1}}_{L^{\infty}(B_{2})} \leq C,
	\end{equation}
where $D L^{-1}$ is the Jacobian matrix of $L^{-1}$.

If $r/2 \leq \varepsilon$, then $L(B(0,r/2))$ may have radius significantly smaller than $r$, however, by using  Mori's estimate that is \eqref{eq:Ldistance}, $L(B(0,r/2))$ contains a ball $B(0,2r^*)$ with
\begin{equation*}
r^* = 2 \left(\frac{r}{64}\right)^K \geq c r^2.
\end{equation*}
In particular, $L(B(0,r/2))$ contains a ball $B(0, 2r')$ with $r' = cr^2$.

On the other hand, by employing similar arguments as before, if $r/2 > \varepsilon$, then it is not difficult to obtain that $L(B(0,r/2))$ contains a ball $B(0,2r')$ with $r' = cr$.
\end{proof}

%Before ending this step of the proof, remark that by construction, and recalling the choice $C_0 = 32 \cdot (32 \cdot 32^3 + 2 \cdot 32 \cdot 4)$, for which we have $C_0/32 - 2 \cdot 32 \cdot 4 = 32 \cdot 32^3$, the disks $(B(L(x_j),32  \varepsilon'))_{j \in J}$ given by Lemma \ref{lem:imagequasiconformal} are  $32 \cdot 32^3 \varepsilon$-separated from each other, from $L(Z)$, from ${\partial B_2}$, from $L(x_{\max})$ and from $L(0) = 0$. Moreover, the annulus $(C(L(x_j), 64 \varepsilon', 4 \cdot 32^3 \varepsilon'))_{j \in J}$ given by Lemma \ref{lem:imagequasiconformal} are  $(32 \cdot 32^3 + 2 \cdot 32 \cdot 4 - 2 \cdot 4 \cdot 32^3 \cdot 4) \varepsilon= 32 \cdot 8 \varepsilon$-separated from each other, from $L(Z)$, from ${\partial B_2}$, from $L(x_{\max})$ and from $L(0) = 0$. 

\textbf{Setting of parameters.} Before ending this step of the proof, we now set $\varepsilon>0$ such that
\begin{equation}
	\label{eq:epsPetitPoincareGeneralFinStep2}
	\varepsilon + \varepsilon^{2/(2+\delta)} \norme{W_1}_{L^{\infty}(B_2)} + \varepsilon^{2/(2+\delta)} \log\left(\frac{2}{ \varepsilon} \right)  \norme{W_2}_{L^{\infty}(B_2)} + \varepsilon^2 \log\left(\frac{2}{ \varepsilon} \right) \norme{V}_{L^{\infty}(B_2)} \leq c.
\end{equation} 
We then set $\varepsilon' = 32 \varepsilon$ and remark that by construction, and recalling the choice $C_0 = 18 \cdot 32^{2}$, for which we have $C_0/32 - 64 = 16\cdot 32$, the disks $B(L(x_j), \varepsilon')$ given by Lemma \ref{lem:imagequasiconformal} are  $16 \varepsilon'$-separated from each other, from $L(Z)$, from ${\partial B_2}$, from $L(0)=0$ and from $L(x_{\max})$. We will also use the notation $x_j' = L(x_j)$.

We also set 
\begin{equation}
\label{eq:defkappa}
\kappa=4/\delta +2,
\end{equation}
and 
\begin{equation}
\label{eq:defc0}
c_0 = 2^{-10}.
\end{equation}
This choice will be made clearer later.\\

From \eqref{eq:gradientL}, $\tilde{W}$ defined in \eqref{eq:deftildeW1}, \eqref{eq:deftildeW2} and \eqref{eq:hatW} belongs to $L^\kappa(B_{2-c_0})$ and we have
\begin{equation}
\label{eq:lpboundtildeW}
\norme{\tilde{W}}_{L^\kappa(B_{2-c_0})} \leq C \norme{\hat{W}}_{\infty}.
\end{equation}

\subsection{Approximate stream function} 

The goal of this part is to obtain a Poincaré lemma for the divergence free vector field $\nabla h + \tilde{W} h$, see \eqref{eq:equationh}. The main difficulty is that $L(\Omega_{\varepsilon}')$ is not simply connected, because of the perforated disks $B(x_j', \varepsilon')$. This is why we first introduce a cut-off function near these disks and near $\partial B_2$ in order to state an \textit{approximate} type Poincaré lemma.\medskip

We introduce a smooth cut-off function $\sigma$ taking value $0$ on $B(0,3)$ and value $1$ on $\R^2 \setminus B(0,4)$, and another smooth cut-off function $\xi$ such that $\xi = 1$ in $B(0,2-8 c_0)$ and $\xi=0$ in $\R^2 \setminus B(0,2-4 c_0)$ and set
\begin{equation}
\label{eq:defyinfunctionh}
	\chi (x) = \xi(x) \prod_{ j \in J}  \sigma \left( \frac{x - x_j'}{\varepsilon'} \right) \text{ for } x \in \R^2.
\end{equation}
We have the following lemma, coming directly from \eqref{eq:equationh} and the second point of \Cref{lem:imagequasiconformal}.
\begin{lemma}
	\label{lem:divergencepcutoff}
	The function $\chi h$ defined in $\R^2$ belongs to $H^1(\R^2)$ and satisfies in the weak sense
\begin{equation}
\label{eq:equationchiv}
-\nabla \cdot(\chi  (\nabla h + \tilde{W} h)) = - \nabla \chi \cdot (\nabla  h + \tilde{W} h) \ \text{in}\ \R^2.
\end{equation}
\end{lemma}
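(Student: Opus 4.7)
My plan is to reduce the lemma to a direct application of the weak equation \eqref{eq:equationh} satisfied by $h$ on $L(\Omega_\varepsilon')$, using the fact that $\chi$ has been designed precisely so that its support lies in a compact subset of $L(\Omega_\varepsilon')$.

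First, I would verify that $\chi h$ makes sense as an element of $H^1(\R^2)$ after extension by zero. By construction of $\sigma$ and $\xi$ in \eqref{eq:defyinfunctionh}, $\chi$ is smooth, bounded, and vanishes in a neighborhood of each disk $\overline{B(x_j',3\varepsilon')}$ as well as in a neighborhood of $\partial B_2$. By Lemma \ref{lem:imagequasiconformal}, the images $L(B(x_j,\varepsilon))$ are contained in the disks $B(x_j',\varepsilon')$, so $\operatorname{supp}(\chi)$ is a compact subset of $L(\Omega_\varepsilon') = B_2 \setminus \bigcup_j L(B(x_j,\varepsilon))$. Since $h = v \circ L^{-1} \in H^1_{\mathrm{loc}}(L(\Omega_\varepsilon'))$, the product $\chi h$ belongs to $H^1(L(\Omega_\varepsilon'))$ with compact support inside this open set, and extending by zero yields a function in $H^1(\R^2)$ with the Leibniz-type gradient $\nabla(\chi h) = \chi \nabla h + h \nabla \chi$.

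Second, I would establish the PDE in the distributional sense. Fix a test function $\psi \in C_c^\infty(\R^2)$; since $\chi$ is smooth and compactly supported in $L(\Omega_\varepsilon')$, the product $\chi \psi$ belongs to $C_c^\infty(L(\Omega_\varepsilon'))$ and is therefore an admissible test function for the weak formulation of \eqref{eq:equationh}. Writing $\chi \nabla \psi = \nabla(\chi \psi) - \psi \nabla \chi$, I compute
\begin{align*}
\int_{\R^2} \chi (\nabla h + \tilde{W} h) \cdot \nabla \psi \, dx
&= \int_{L(\Omega_\varepsilon')} (\nabla h + \tilde{W} h) \cdot \nabla(\chi \psi) \, dx \\
&\quad - \int_{\R^2} \psi \, \nabla \chi \cdot (\nabla h + \tilde{W} h) \, dx.
\end{align*}
The first integral vanishes by \eqref{eq:equationh} tested against $\chi \psi$, which yields precisely the weak form of \eqref{eq:equationchiv}.

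The only mild technical point, which I would address briefly, is the integrability of the various products: in the supports considered, $\nabla h \in L^2$, and by the two-dimensional Sobolev embedding $h$ lies in $L^q_{\mathrm{loc}}$ for every $q<\infty$; combined with $\tilde{W} \in L^\kappa_{\mathrm{loc}}(B_{2-c_0})$ from \eqref{eq:lpboundtildeW} and the smoothness of $\chi$ and $\psi$, every integrand above is in $L^1$ on the compact set $\operatorname{supp}(\chi)\cap\operatorname{supp}(\psi)$. There is no real obstacle here; the result is essentially a bookkeeping application of the product rule once the geometric placement of $\operatorname{supp}(\chi) \subset L(\Omega_\varepsilon')$ has been verified.
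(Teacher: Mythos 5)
Your proof is correct and fills in exactly the details that the paper leaves implicit when it asserts the lemma "comes directly from \eqref{eq:equationh} and the second point of \Cref{lem:imagequasiconformal}": you verify that $\operatorname{supp}(\chi)$ is a compact subset of $L(\Omega_\varepsilon')$ (using that $L(\overline{B(x_j,\varepsilon)}) \subset \overline{B(x_j',\varepsilon')}$ while $\chi$ vanishes on $B(x_j',3\varepsilon')$ and outside $B(0,2-4c_0)$), then test the weak formulation of \eqref{eq:equationh} with $\chi\psi$ and rearrange via the Leibniz rule. This is the paper's intended argument.
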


For the next, we need to recast the divergence elliptic equation in polar coordinates. For that purpose, we recall well-known useful relations between Cartesian coordinates and polar coordinates in the following paragraph.
\medskip

\textbf{From Cartesian to polar coordinates.} By taking $e_1 = (1,0)$ and $e_2=(0,1)$ the canonical basis of $\R^2$, for $(\rho, \theta) \in [0,+\infty) \times [0, 2 \pi)$,  we set $e_{\rho} = \cos(\theta) e_1 + \sin(\theta) e_2,\ e_{\theta} = - \sin(\theta) e_1 + \cos(\theta) e_2,$ therefore we have the relation $e_1 = \cos(\theta) e_{\rho} - \sin(\theta) e_{\theta},\ e_2 = \sin(\theta) e_{\rho} + \cos(\theta) e_{\theta}$. We then have that $(e_{\rho}, e_{\theta})$ is an orthonormal basis of $\R^2$.

Given a function $u \in C^1(\R^2)$, we then define
\begin{equation}
	U(\rho,\theta) = u(\rho \cos (\theta), \rho \sin (\theta)),\qquad (\rho, \theta) \in[0,+\infty) \times [0, 2 \pi).
\end{equation}
By the chain rule, we have the following relation for $\nabla u = \partial_{x_1} u e_1 + \partial_{x_2} u e_2$, 
\begin{equation}
\nabla u = \partial_{\rho} U e_{\rho} + \frac{1}{\rho} \partial_{\theta} U e_{\theta},
\end{equation}
and for $\text{curl}(u) = \partial_{x_2} u e_1 - \partial_{x_1} u e_2$,
\begin{equation}
\text{curl}( u )= \frac{1}{\rho} \partial_{\theta} U e_{\rho} -  \partial_{\rho} U e_{\theta}.
\end{equation}
Given now a vector valued-function $g \in C^1(\R^2;\R^2)$, setting 
$$g(\rho \cos (\theta), \rho \sin (\theta)) = G_{\rho}(\rho,\theta) e_{\rho} + G_{\theta}(\rho,\theta)  e_{\theta},$$ 
we then have by the chain rule applied to $\nabla \cdot g =\partial_{x_1} g_1 + \partial_{x_2} g_2$ that
\begin{equation}
\nabla \cdot g = \frac{1}{\rho} \partial_\rho \left(\rho G_{\rho}\right) + \frac{1}{\rho}\partial_\theta \left( G_{\theta}\right).
\end{equation}
In the following, for the sake of simplicity, we will make an abuse of notation by identifying $u$ with $U$, and $g$ with $G$.\medskip

We have the following result. 
\begin{lemma}
\label{lem:approximatestreampolar}
Let us define for $(\rho,\theta) \in [0,2)\times[0,2\pi)$,
\begin{equation}
\label{eq:defvrhotheta}
\tilde{h}(\rho,\theta) = - \int_{0}^{\rho}  \chi(s,\theta) \left[ \frac{1}{s} \partial_\theta h(s, \theta) - [\tilde{W}_1 h](s, \theta) \sin(\theta) + [\tilde{W}_2 h](s, \theta) \cos(\theta)\right] ds.
\end{equation}
Then, $\tilde{h} \in H^1(B_2)$ and satisfies for $(\rho,\theta) \in [0,2)\times[0,2\pi)$,
\begin{align}
\label{eq:derivativerhov}
\partial_{\rho} \tilde{h}(\rho,\theta) &= -  \chi(\rho,\theta) \left[ \frac{1}{\rho} \partial_\theta h(\rho, \theta) - [\tilde{W}_1 h](\rho, \theta) \sin(\theta) + [\tilde{W}_2 h](\rho, \theta) \cos(\theta)\right],\\
\partial_{\theta} \tilde{h}(\rho,\theta) &= \rho \chi(\rho,\theta)  \left[  \partial_\rho h(\rho, \theta) + [\tilde{W}_1 h](\rho, \theta) \cos(\theta) + [\tilde{W}_2 h](\rho, \theta) \sin(\theta)\right] + E_h, \label{eq:derivativethetav}
\end{align}
where
\begin{multline}
E_h(\rho,\theta) = - \int_{0}^{\rho}\partial_{\theta} (\chi) (s,\theta) \left[ \frac{1}{s} \partial_\theta h(s, \theta) - [\tilde{W}_1 h](s, \theta) \sin(\theta) + [\tilde{W}_2 h](s, \theta) \cos(\theta)\right] ds\\
 - \int_{0}^{\rho} s \partial_{\rho} (\chi)(s,\theta)  \left[ \partial_\rho h(s,\theta) + [\tilde{W}_1 h](s,\theta) \cos(\theta) + [\tilde{W}_2 h](s,\theta) \sin(\theta)\right]  ds.
 \label{eq:defEh}
\end{multline}
\end{lemma}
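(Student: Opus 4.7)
The plan is to view $\tilde{h}$ as an approximate stream function for the vector field $\chi(\nabla h + \tilde{W} h)$ expressed in polar coordinates. Using the paragraph ``From Cartesian to polar coordinates'' preceding the statement, its radial and angular components read
\begin{equation*}
G^{\mathrm{rad}}(\rho,\theta) = \partial_\rho h + (\tilde{W}_1 h)\cos\theta + (\tilde{W}_2 h)\sin\theta, \quad T(\rho,\theta) = \tfrac{1}{\rho}\partial_\theta h - (\tilde{W}_1 h)\sin\theta + (\tilde{W}_2 h)\cos\theta,
\end{equation*}
so that $\tilde{h}(\rho,\theta) = -\int_0^\rho \chi(s,\theta) T(s,\theta)\,ds$. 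The radial derivative formula \eqref{eq:derivativerhov} is then nothing but the fundamental theorem of calculus applied to this definition.

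For the angular derivative, I would differentiate under the integral sign to obtain $\partial_\theta \tilde{h} = -\int_0^\rho [\partial_\theta \chi \cdot T + \chi \partial_\theta T]\,ds$, and then substitute $\chi \partial_\theta T$ using the divergence identity \eqref{eq:equationchiv} of \Cref{lem:divergencepcutoff}. Rewritten in polar coordinates using the formulas for $\nabla\cdot g$ and $\nabla\chi \cdot (\nabla h + \tilde{W} h)$ and multiplied by $\rho$, this identity becomes
\begin{equation*}
\partial_\rho(\rho \chi G^{\mathrm{rad}}) + \partial_\theta(\chi T) = \rho \, \partial_\rho \chi \cdot G^{\mathrm{rad}} + \partial_\theta \chi \cdot T.
\end{equation*}
Solving for $\partial_\theta(\chi T) = \chi \partial_\theta T + T \partial_\theta \chi$ and substituting, the $\partial_\theta \chi \cdot T$ contributions cancel, the exact derivative $\partial_s(s \chi G^{\mathrm{rad}})$ integrates to the boundary term $\rho \chi(\rho,\theta) G^{\mathrm{rad}}(\rho,\theta)$ (the contribution at $s=0$ vanishes thanks to the $s$ factor), and the remaining two integrals reassemble into exactly $E_h$ as prescribed by \eqref{eq:defEh}, yielding \eqref{eq:derivativethetav}.

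The main obstacle is the regularity claim $\tilde{h} \in H^1(B_2)$. The apparent singularity in the integrand is the factor $\tfrac{1}{s}\partial_\theta h$ in $T$, but it equals $e_\theta \cdot \nabla h$ in Cartesian coordinates, hence is square-integrable on the support of $\chi$ since $h \in H^1_{\mathrm{loc}}(L(\Omega_\varepsilon'))$ by \Cref{lem:harmonich} and $\chi$ is compactly supported in $B_{2-4c_0} \cap L(\Omega_\varepsilon')$. Combined with $\tilde{W} \in L^\kappa(B_{2-c_0})$, $\kappa > 2$, from \eqref{eq:lpboundtildeW}, and the local Hölder boundedness of $h$ provided by elliptic regularity applied to \eqref{eq:equationh}, one checks that $\chi T$ and $\chi G^{\mathrm{rad}}$ lie in $L^2(B_2)$. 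An application of Cauchy--Schwarz in the radial variable then gives $\tilde{h} \in L^2(B_2)$, and the $L^2$-bounds on $\partial_\rho \tilde{h}$ and $\tfrac{1}{\rho}\partial_\theta \tilde{h}$ are read off from \eqref{eq:derivativerhov}--\eqref{eq:derivativethetav}, the error $E_h$ being controlled by the same argument since $\partial_\theta \chi = \rho\,(e_\theta\cdot \nabla\chi)$ vanishes at $\rho=0$, absorbing the apparent singularity. To justify the differentiations rigorously, I would first carry out the computation for smooth data (regularizing $h$ and $\tilde{W}$) and pass to the limit by the density arguments standard in this setting.
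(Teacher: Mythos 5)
Your formal computation — differentiating \eqref{eq:defvrhotheta} under the integral sign, substituting the polar form of the divergence identity \eqref{eq:equationchiv} to turn $\chi\,\partial_\theta T$ into an exact $s$-derivative plus cut-off errors, integrating that exact derivative to produce $\rho\chi G^{\mathrm{rad}}$, and recognizing the leftover integrals as $E_h$ — is precisely the argument in the paper, as is the observation that $\chi T = \chi\,e_\theta\cdot(\nabla h+\tilde W h)$ and $\chi G^{\mathrm{rad}}$ are square-integrable (the $1/s$ factor being an artifact of polar coordinates) so that $\tilde h\in H^1(B_2)$. The one place you deviate is in the rigorization step: the paper mollifies the vector field $F=\chi(\nabla h+\tilde W h)$ itself, so that $-\nabla\cdot F_n=f_n$ holds classically and the identity can be integrated exactly before passing to the limit in $L^2_{\mathrm{loc}}$, whereas regularizing $h$ and $\tilde W$ separately, as you suggest, would destroy \eqref{eq:equationchiv} and force you to track commutator errors — a workable but noticeably less clean route.
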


The function $\tilde{h}$ is called an approximate stream function of $\chi (\nabla h + \tilde{W} h)$ because \eqref{eq:derivativerhov} and \eqref{eq:derivativethetav} translates into
\begin{equation}
\label{eq:approximatestreamcompactway}
\chi (\nabla h + \tilde{W} h) = \mathrm{curl}(\tilde{h}) - \frac{ E_h}{\rho} e_{\rho}.
\end{equation}
Formally, \Cref{lem:approximatestreampolar} could be justified as follows. Taking \eqref{eq:defvrhotheta}, we immediately obtain \eqref{eq:derivativerhov}. Moreover, by writing \eqref{eq:equationchiv} in polar coordinates, we have
\begin{multline}
\label{eq:divergenceeqpolar}
- \partial_{\rho} \left(\chi(\rho,\theta) \rho \left[  \partial_\rho h(\rho, \theta) + [\tilde{W}_1 h](\rho, \theta) \cos(\theta) + [\tilde{W}_2 h](\rho, \theta) \sin(\theta)\right]\right)\\
- \partial_{\theta} \left(\chi(\rho,\theta) \left[ \frac{1}{\rho} \partial_\theta h(\rho, \theta) - [\tilde{W}_1 h](\rho, \theta) \sin(\theta) + [\tilde{W}_2 h](\rho, \theta) \cos(\theta)\right] \right)\\
= - \rho  \partial_{\rho} \chi(\rho,\theta) \left(  \partial_\rho h(\rho, \theta) + [\tilde{W}_1 h](\rho, \theta) \cos(\theta) + [\tilde{W}_2 h](\rho, \theta) \sin(\theta)\right)\\
-    \partial_{\theta} \chi(\rho,\theta) \left(  \frac{1}{\rho} \partial_\theta h(\rho, \theta) - [\tilde{W}_1 h](\rho, \theta) \sin(\theta) + [\tilde{W}_2 h](\rho, \theta) \cos(\theta)\right)\ \text{in}\ B_2.
\end{multline}
If \eqref{eq:divergenceeqpolar} was satisfied in the strong sense, we obtain from \eqref{eq:defvrhotheta} and an integration by parts the desired equality \eqref{eq:derivativethetav}. The following proof takes care of the fact that \eqref{eq:divergenceeqpolar} is satisfied a priori only in the weak sense by using a regularization argument.
\begin{proof}
The equation \eqref{eq:equationchiv} rewrites 
\begin{equation}
\label{eq:divFequalf}
- \nabla \cdot F = f \ \text{in}\ \R^2,
\end{equation}
with
\begin{equation}
F = \chi  (\nabla h + \tilde{W} h),\ f = -  \nabla \chi \cdot (\nabla h + \tilde{W} h).
\end{equation}
We consider $(K_{n})_{n \geq 1}$ a radial approximation of the identity, or a standard mollifier, satisfying
\begin{equation}
K_n \in C^{\infty}(\R^2),\ K_n \geq 0,\ \int_{\R^2} K_n dx = 1,\ K_n(x) = K_n(|x|),\ \text{supp}(K_n) \in B(0,1/n)\quad \forall n \geq 1.
\end{equation}
Then $F_{n} =  K_n * F \in C^{\infty}(\R^2)$, satisfies in the weak sense (then in the strong sense)
\begin{equation}
\label{eq:divergencesmooth}
- \nabla \cdot F_{n} =  K_n * f =: f_n\ \text{in} \ \R^2.
\end{equation}
Indeed, by taking a test function $\zeta \in C_c^{\infty}(\R^2)$, we have by Fubini's theorem and \eqref{eq:divFequalf}
\begin{multline}
\int_{\R^2} F_n \cdot \nabla \zeta = \int_{\R^2} (K_n * F) \cdot \nabla \zeta = \int_{\R^2}  F \cdot K_n * \nabla  \zeta \\
=  \int_{\R^2}  F \cdot \nabla(K_n *  \zeta) = \int_{\R^2} f (K_n *  \zeta) =  \int_{B_2} (K_n * f) \zeta = \int_{\R^2} f_n \zeta.
\end{multline}
First, in polar coordinates, \eqref{eq:divergencesmooth} becomes
\begin{equation}
\label{eq:divergencesmoothpolar}
- \partial_{\rho} (\rho F_{n,\rho})\\
- \partial_{\theta} F_{n,\theta}\\
= \rho f_n \ \text{in} \ \R^2.
\end{equation}
Therefore, let us define
\begin{equation}
\tilde{h}_{n}(\rho,\theta) = - \int_{0}^{\rho} F_{n,\theta}(s, \theta) ds\ \text{in}\ \R^2.
\end{equation}
We then have
\begin{equation}
\partial_{\rho} \tilde{h}_{n}(\rho,\theta) = - F_{n,\theta}(\rho, \theta)\ \text{in}\ \R^2.
\end{equation}
And by using \eqref{eq:divergencesmoothpolar}, we have
\begin{multline}
\partial_{\theta} \tilde{h}_{n}(\rho,\theta) =  - \int_{0}^{\rho} \partial_{\theta} F_{n,\theta}(s, \theta) ds =  \int_{0}^{\rho} \partial_{s} (s F_{n,\theta}(s, \theta)) ds + \int_{0}^{\rho}  s f_n(s, \theta) ds \\
= \rho F_{n,\theta}(\rho,\theta) + \int_{0}^{\rho}  s f_n(s,\theta) ds\ \text{in}\ \R^2.
\end{multline}
On the other hand, by sending $n \to +\infty$, we get by convolution properties, see for instance \cite[Theorem 4.22]{Bre11} that
\begin{align*}
\lim_{n \to +\infty} \norme{F_n-F}_{L^2(\R^2)} &= 0,\\
\lim_{n \to +\infty} \norme{f_n-f}_{L^2(\R^2)} &= 0.
\end{align*}
Moreover, let us set
\begin{equation}
\tilde{h}(\rho,\theta) = - \int_{0}^{\rho} F_{\theta}(s, \theta) ds\ \text{in}\ \R^2.
\end{equation}
First, we have
\begin{equation}
\lim_{n \to +\infty} \norme{\tilde{h}_n-\tilde{h}}_{L_{\mathrm{loc}}^2(\R^2)} = 0,
\end{equation}
Moreover, we have that
\begin{align*}
\lim_{n \to +\infty} \norme{\partial_{\rho} \tilde{h}_n+ F_{\theta}(\rho,\theta)}_{L_{\mathrm{loc}}^2(\R^2)} &= 0,\\
\lim_{n \to +\infty} \norme{\frac{1}{\rho} \partial_{\theta} \tilde{h}_n-\left( F_{\rho}(\rho,\theta) + \frac{1}{\rho}\int_{0}^{\rho}  s f(s,\theta) ds \right)}_{L_{\mathrm{loc}}^2(\R^2)} &= 0,
\end{align*}
We then deduce from the last three limits and the uniqueness of the limit in the distributional sense that $\tilde{h} \in H_{\mathrm{loc}}^1(\R^2)$ and
\begin{align}
\partial_{\rho} \tilde{h}(\rho,\theta) &= -F_{\theta}(\rho,\theta)\ \text{in}\ \R^2,\\
\partial_{\theta} \tilde{h}(\rho,\theta) 
&= \rho F_{\theta}(\rho,\theta) + \int_{0}^{\rho}  s f(s,\theta) ds\ \text{in}\ \R^2.
\end{align}
By observing that for $(\rho, \theta) \in[0,+\infty) \times [0, 2 \pi)$ we have
\begin{equation}
F_{\theta}(\rho,\theta) = \chi(\rho,\theta) \left[ \frac{1}{\rho} \partial_\theta h(\rho, \theta) - [\tilde{W}_1 h](\rho, \theta) \sin(\theta) + [\tilde{W}_2 h](\rho, \theta) \cos(\theta)\right],
\end{equation}
\begin{equation}
F_{\rho}(\rho,\theta) = \chi(\rho,\theta) \rho \left[  \partial_\rho h(\rho, \theta) + [\tilde{W}_1 h](\rho, \theta) \cos(\theta) + [\tilde{W}_2 h](\rho, \theta) \sin(\theta)\right],
\end{equation}
and
\begin{multline}
\rho f(\rho,\theta) = -\rho  \partial_{\rho}\chi(\rho,\theta)  \left(  \partial_\rho h(\rho, \theta) + [\tilde{W}_1 h](\rho, \theta) \cos(\theta) + [\tilde{W}_2 h](\rho, \theta) \sin(\theta)\right)\\
-   \partial_{\theta} \chi(\rho,\theta) \left(  \frac{1}{\rho} \partial_\theta h(\rho, \theta) - [\tilde{W}_1 h](\rho, \theta) \sin(\theta) + [\tilde{W}_2 h](\rho, \theta) \cos(\theta)\right),
\end{multline}
we then obtain the conclusion of the proof by specifying $(\rho, \theta) \in[0,2) \times [0, 2 \pi)$.
\end{proof}
\subsection{The reduction to the non-homogeneous $\partial_{\overline{z}}$-equation}

The goal of this last part in Step 3 is to use the approximate stream function $\tilde{h}$ defined in the previous subsection in order to simplify a bit more the equation \eqref{eq:equationh}. In order to do that, we will first computed the $\partial_{\overline{z}}$-equation satisfied by $\chi h + i \tilde{h}$, then reduce it by a Cauchy transform.\medskip

We have the following Beltrami equation, perturbed by a zero order term.
\begin{lemma}
\label{lem:beltramifirstequation}
Let us define
\begin{equation}
\label{eq:defgamma}
\gamma = \chi h + i \tilde{h}\ \text{in}\ B_2.
\end{equation}
Then $\gamma \in H^1(B_2)$ and satisfies the following Beltrami equation
\begin{equation}
\label{eq:equationgamma}
\partial_{\overline{z}} \gamma  = \alpha \gamma +(\partial_{\overline{z}}\chi)   h  + \tilde{E}_h\ \text{in}\ B_2,
\end{equation}
where for $(\rho,\theta) \in [0,2)\times[0,2\pi)$,
\begin{align}
\alpha &= \Big[\frac{\chi e^{i \theta} (-\tilde{W}_1 \cos(\theta) - \tilde{W}_2 \sin(\theta) + i \tilde{W}_1 \sin(\theta) - i \tilde{W}_2 \cos(\theta)) }{4}\Big] \left(1 + \frac{\overline{\gamma}}{\gamma}\right)\ \text{if}\ \gamma \neq 0,\label{eq:defwtilde}\\
 \alpha &= 0\ \text{if}\ \gamma = 0,\label{eq:defwtilde0}\\
\tilde{E}_h &= - \frac{e^{i \theta}E_h}{2\rho}.\label{eq:ehtilde}
\end{align}
Moreover, we have
\begin{equation}
\label{eq:boundalpha}
\norme{\alpha}_{L^\kappa(B_2)} \leq C \norme{W_1}_{\infty} + C \norme{W_2}_{\infty}.
\end{equation}
\end{lemma}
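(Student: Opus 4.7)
The plan is to view $\gamma = \chi h + i\tilde h$ as an element of $H^1(B_2)$ (which follows from Lemma \ref{lem:divergencepcutoff} for $\chi h$ and Lemma \ref{lem:approximatestreampolar} for $\tilde h$) and to compute $\partial_{\overline z}\gamma$ directly in polar coordinates. Using the polar form of the $\overline\partial$-operator,
\begin{equation*}
\partial_{\overline z} = \frac{e^{i\theta}}{2}\Bigl(\partial_\rho + \frac{i}{\rho}\,\partial_\theta\Bigr),
\end{equation*}
I would plug in the formulas \eqref{eq:derivativerhov}--\eqref{eq:derivativethetav} for $\partial_\rho\tilde h$ and $\partial_\theta\tilde h$. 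After multiplying out, the terms proportional to $\chi\,\partial_\rho h$ and $\frac{\chi}{\rho}\partial_\theta h$ cancel in pairs: this is exactly the Cauchy--Riemann-like structure built into the definition of the approximate stream function $\tilde h$ and is precisely what makes the reduction work.

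Once these cancellations are used, what survives splits into three groups. The contributions coming from $\partial_\rho\chi$ and $\frac{1}{\rho}\partial_\theta\chi$ recombine into the single term $(\partial_{\overline z}\chi)\,h$. The error term $E_h$ appearing inside $\partial_\theta\tilde h$ in \eqref{eq:derivativethetav} yields, after multiplication by $\frac{ie^{i\theta}}{2\rho}$, exactly the inhomogeneity $\tilde E_h = -e^{i\theta}E_h/(2\rho)$ displayed in \eqref{eq:ehtilde}. Finally, the terms carrying $\tilde W_1,\tilde W_2$ combine into
\begin{equation*}
\frac{e^{i\theta}\,\chi h}{2}\bigl(-\tilde W_1\cos\theta - \tilde W_2\sin\theta + i\tilde W_1\sin\theta - i\tilde W_2\cos\theta\bigr).
\end{equation*}
Using $\chi h = \mathrm{Re}(\gamma) = \tfrac{1}{2}(\gamma + \overline\gamma)$, which is immediate from \eqref{eq:defgamma}, this last quantity rewrites as $\alpha\gamma$ with $\alpha$ of the shape prescribed by \eqref{eq:defwtilde}, with the convention \eqref{eq:defwtilde0} imposed at points where $\gamma = 0$.

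For the $L^\kappa$-estimate \eqref{eq:boundalpha}, I would use that $|1 + \overline\gamma/\gamma|\leq 2$ pointwise, so $|\alpha|\leq C|\chi||\tilde W|$. Since $\chi$ is supported inside $B_{2-4c_0}\subset B_{2-c_0}$, the estimate \eqref{eq:lpboundtildeW} gives
\begin{equation*}
\|\alpha\|_{L^\kappa(B_2)}\leq C\|\tilde W\|_{L^\kappa(B_{2-c_0})}\leq C\|\hat W\|_\infty \leq C\|W_1\|_\infty + C\|W_2\|_\infty,
\end{equation*}
by the definition \eqref{eq:defW} of $\hat W$. The main technical point — rather than an obstacle — is to justify the polar-coordinate manipulations in the weak sense, which can be handled exactly as in the mollification argument used to prove Lemma \ref{lem:approximatestreampolar}; once the formulas \eqref{eq:derivativerhov}--\eqref{eq:derivativethetav} are available as genuine weak identities, the rest of the proof is a purely algebraic bookkeeping of trigonometric terms.
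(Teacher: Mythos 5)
Your proposal is correct and follows essentially the same route as the paper: compute $\partial_{\overline z}\gamma$ using the polar form of $\partial_{\overline z}$, substitute \eqref{eq:derivativerhov}--\eqref{eq:derivativethetav}, watch the $\chi\partial_\rho h$ and $\chi\rho^{-1}\partial_\theta h$ terms cancel, and collect the surviving $(\partial_{\overline z}\chi)h$, $\tilde E_h$ and $\tilde W$-terms, closing with $\chi h=\Re(\gamma)=\tfrac12(\gamma+\overline\gamma)$ and the same support-plus-\eqref{eq:lpboundtildeW} argument for the $L^\kappa$ bound. One small remark worth flagging: since $\chi h=\tfrac12(\gamma+\overline\gamma)$ already contributes the factor of $\chi$, the computation you carry out actually yields $\alpha$ without the explicit $\chi$ appearing in \eqref{eq:defwtilde}; this does not affect the support of $\alpha$ (the factor $1+\overline\gamma/\gamma$ already vanishes where $\chi=0$) nor the estimate \eqref{eq:boundalpha}, but it is a discrepancy with the stated formula that your ``shape prescribed by \eqref{eq:defwtilde}'' phrasing glosses over.
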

\begin{proof}
We use the fundamental relation
\begin{equation}
\partial_{\overline{z}} f = \frac{1}{2} e^{i \theta} \left( \partial_{\rho}f  + \frac{i}{\rho} \partial_{\theta} f\right).
\end{equation}

We compute
\begin{equation}
2 \partial_{\overline{z}} \gamma = 2 (\partial_{\overline{z}} \chi) h + \chi e^{i \theta} \left( \partial_{\rho} h  + \frac{i}{\rho} \partial_{\theta} h\right) + i e^{i \theta} \left( \partial_{\rho} \tilde{h}  + \frac{i}{\rho} \partial_{\theta} \tilde{h}\right).
\end{equation}

Now, we use the equation satisfied by $\tilde{h}$, i.e. \eqref{eq:derivativerhov} and \eqref{eq:derivativethetav} and we get
\begin{align}
\partial_{\rho} \tilde{h} &= - \chi \left[ \frac{1}{\rho} \partial_\theta h - \tilde{W}_1 h \sin(\theta) + \tilde{W}_2 h \cos(\theta)\right],\\
 \partial_{\theta} \tilde{h} &= \chi \rho  \left[ \partial_\rho h + \tilde{W}_1 h \cos(\theta) + \tilde{W}_2 h \sin(\theta)\right] + E_h,
\end{align}
then
\begin{multline}
2 \partial_{\overline{z}} \gamma - 2 (\partial_{\overline{z}} \chi) h\\
 = 
- \chi e^{i \theta} \tilde{W}_1 h \cos(\theta) - \chi  e^{i \theta} \tilde{W}_2 h \sin(\theta) + i \chi  e^{i \theta} \tilde{W}_1 h \sin(\theta) - i \chi  e^{i \theta} \tilde{W}_2 h \cos(\theta) - e^{i \theta} \frac{E_h}{\rho}.
\end{multline}
Hence, we obtain easily the expected equation \eqref{eq:equationgamma}.

The final bound \eqref{eq:boundalpha} comes from \eqref{eq:defwtilde}, \eqref{eq:defwtilde0}, \eqref{eq:lpboundtildeW} and the properties of the cut-off $\chi$ defined in \eqref{eq:defyinfunctionh}.
%On the same way, we have 
%\begin{multline}
%2 \partial_{z} \gamma - 2 (\partial_{z} \chi) v = (1+ \varphi^2) e^{-i \theta} (\chi \partial_{\rho} v + \frac{i}{\rho} \chi \partial_{\theta} v) \\
%+ \chi \varphi^2 e^{-i \theta} W_1 v \cos + \chi \varphi^2 e^{-i \theta} W_2 v \sin + i \chi \varphi^2 e^{-i \theta} W_1 v \sin - i \chi \varphi^2 e^{-i \theta} W_2 \cos + e^{-i \theta} \frac{E}{\rho}.
%\end{multline}
%We multiply by $\mu$ the last equation, we obtain
%\begin{multline}
%2 \mu \partial_{z} \gamma - \mu 2 (\partial_{z} \chi) v = (1- \varphi^2)e^{i \theta} (\chi \partial_{\rho} v +  \frac{i}{\rho} \chi \partial_{\theta} v) \\
%+ \mu \chi \varphi^2 e^{-i \theta} W_1 v \cos + \mu \chi \varphi^2 e^{-i \theta} W_2 v \sin + \mu  i \chi \varphi^2 e^{-i \theta} W_1 v \sin -  \mu i \chi \varphi^2 e^{-i \theta} W_2 \cos + \mu  e^{-i \theta} \frac{E}{\rho}.
%\end{multline}
%We do a substraction, divide by $2$ and we obtain the result, by using that 
%\begin{equation}
%\chi v = \frac{\gamma + \overline{\gamma}}{2} = \frac{\left(1 + \frac{\overline{\gamma}}{\gamma}\right)}{2} \gamma.
%\end{equation}
%This concludes the proof.
\end{proof}

Let us define the operator
\begin{equation}
\label{eq:defT} 
T \omega(z) = - \frac{1}{\pi} \int_{B_2} \frac{\omega(\zeta)}{\zeta-z} d \zeta,\qquad \forall \omega \in L^P(B_2),\ P>2,
\end{equation}
that is called the Cauchy transform of $\omega$.

We have the following result.
\begin{lemma}
\label{lem:bojarski}
Let $\omega \in L^P(B_2)$, $P>2$. Then, $T \omega$ exists almost everywhere as an absolutely convergent integral Moreover, the following relations and estimates hold
\begin{align}
\partial_{\overline{z}} (T \omega) &= \omega,\label{eq:dzbarT}\\
|T \omega (z)| &\leq C(P) \norme{\omega}_{L^P(B_2)},\label{eq:boundT} \\
|T \omega (z_1) - T \omega (z_2)| & \leq C(P)\norme{\omega}_{L^P(B_2)} |z_1-z_2|^{Q},\ Q=(P-2)/P,\ P < +\infty,\label{eq:HolderT}
\end{align}
\end{lemma}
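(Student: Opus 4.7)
The plan is to follow the classical arguments for the Cauchy transform, essentially as in Vekua's treatise or in \cite{AIM09}. The three claims will be proved in sequence, using Hölder's inequality and the fact that $E(z) = 1/(\pi z)$ is the fundamental solution of $\partial_{\overline z}$.

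\textbf{Existence and $L^\infty$ bound.} Set $P' = P/(P-1)$, so that $P' < 2$ since $P > 2$. For $z \in B_2$, apply Hölder's inequality:
\begin{equation*}
\int_{B_2} \frac{|\omega(\zeta)|}{|\zeta-z|} d\zeta \leq \norme{\omega}_{L^P(B_2)} \left(\int_{B_2} \frac{d\zeta}{|\zeta-z|^{P'}}\right)^{1/P'}.
\end{equation*}
Since $P' < 2$ and $B_2$ is bounded, the right-hand factor is bounded uniformly in $z$ by a constant $C(P)$. This proves absolute convergence a.e.\ and estimate \eqref{eq:boundT}.

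\textbf{The $\partial_{\overline z}$ identity.} Recognizing $T\omega = E \ast (\omega \chi_{B_2})$ with $E(z) = 1/(\pi z)$, and using the classical distributional identity $\partial_{\overline z} E = \delta_0$, one gets $\partial_{\overline z}(T\omega) = \delta_0 \ast (\omega \chi_{B_2}) = \omega$ in $B_2$, which is \eqref{eq:dzbarT}. This can equivalently be checked by testing $T\omega$ against $\partial_z \varphi$ for $\varphi \in C_c^\infty(B_2)$ and applying Fubini together with the Cauchy--Pompeiu formula.

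\textbf{Hölder continuity.} Let $\delta = |z_1 - z_2|$. Write
\begin{equation*}
T\omega(z_1) - T\omega(z_2) = -\frac{1}{\pi}\int_{B_2} \omega(\zeta)\left(\frac{1}{\zeta-z_1} - \frac{1}{\zeta-z_2}\right) d\zeta,
\end{equation*}
and split the domain into $D_1 = B(z_1,2\delta) \cap B_2$, $D_2 = B(z_2,2\delta) \cap B_2$ and $D_3 = B_2 \setminus (D_1 \cup D_2)$. On $D_1 \cup D_2$, estimate each term $1/(\zeta-z_j)$ separately by Hölder with exponents $P,P'$; using $P'<2$ and integrating in polar coordinates around $z_j$,
\begin{equation*}
\int_{D_j} \frac{d\zeta}{|\zeta - z_j|^{P'}} \leq C \delta^{2 - P'},
\end{equation*}
which yields a contribution $\leq C(P)\norme{\omega}_{L^P(B_2)} \delta^{(2-P')/P'} = C(P)\norme{\omega}_{L^P(B_2)} \delta^{Q}$ since $(2-P')/P' = 2/P' - 1 = (P-2)/P$. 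On $D_3$, both $|\zeta - z_1|$ and $|\zeta - z_2|$ are at least $\delta$ and comparable to each other, so
\begin{equation*}
\left|\frac{1}{\zeta-z_1} - \frac{1}{\zeta-z_2}\right| = \frac{\delta}{|\zeta-z_1||\zeta-z_2|} \leq \frac{C \delta}{|\zeta-z_1|^2}.
\end{equation*}
Hölder again, together with $\int_{D_3} |\zeta-z_1|^{-2P'} d\zeta \leq C \delta^{2-2P'}$ (since $2P' > 2$), yields a contribution bounded by $C(P) \norme{\omega}_{L^P(B_2)} \delta \cdot \delta^{(2-2P')/P'} = C(P)\norme{\omega}_{L^P(B_2)} \delta^{Q}$. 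Summing the three contributions proves \eqref{eq:HolderT}.

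The whole argument is essentially bookkeeping; the only delicate point is tracking the exponents in the Hölder estimate on $D_3$, where the restriction $P>2$ is crucial to ensure both that $P'<2$ (integrability of the kernel) and that the final exponent $Q = (P-2)/P$ is positive.
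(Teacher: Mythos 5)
The paper does not prove this lemma; it simply refers to \cite[Section 1]{Boj57} and \cite[Chapter 1, Paragraph 6]{Vek62} for the three facts. Your proof is a correct, self-contained reconstruction of exactly the classical potential-theoretic argument contained in those references: Hölder with $P' = P/(P-1) < 2$ for the pointwise bound, the distributional identity $\partial_{\overline z}(1/(\pi z)) = \delta_0$ for the derivative, and the near/far decomposition for Hölder continuity, with the exponent bookkeeping $(2-P')/P' = -2/P$ on the far region worked out correctly. One small point worth tightening in the write-up: on $D_1 \cup D_2$ you also need to bound the cross integrals $\int_{D_1}|\zeta-z_2|^{-P'}$ and $\int_{D_2}|\zeta-z_1|^{-P'}$, not only $\int_{D_j}|\zeta-z_j|^{-P'}$; these are handled the same way since $D_1\subset B(z_2,3\delta)$ and $D_2\subset B(z_1,3\delta)$, so the bound $C\delta^{2-P'}$ still holds, but as stated the estimate only addresses the diagonal terms.
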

All these results of \Cref{lem:bojarski} are collected in \cite[Section 1]{Boj57}, \cite[Chapter 1, Paragraph 6]{Vek62}.\medskip 
%More precisely and respectively, \cite[Theorem 1.2]{Boj57}, \cite[Theorem 1.19 and Equation (6.9)a]{Vek62}.

\textbf{Setting of parameters.} We now fix
\begin{equation}
\label{eq:defP}
P = \kappa = 4/\delta + 2,
\end{equation}
and
\begin{equation}
\label{eq:defQ}
Q =  (P-2)/P = 2/(2+\delta).
\end{equation}

An application of the previous lemma gives the following result.

\begin{lemma}
\label{lem:defomegaandT}
There exists $\beta \in L^{\infty}(B_2)$ such that
\begin{equation}
\label{eq:equationomega}
\partial_{\overline{z}}  \beta  = \alpha.
\end{equation}
Moreover, $\beta \in L^{\infty}(B_2)$ satisfies
\begin{equation}
\label{eq:boundbeta}
\norme{\beta}_{L^{\infty}(B_2)} \leq C \norme{\hat{W}}_{\infty}.
\end{equation}
Finally, we have the following Hölder's estimate
\begin{equation}
\label{eq:HolderTSpecific}
|\beta (z_1) - \beta (z_2)|  \leq C \norme{\hat{W}}_{\infty} |z_1-z_2|^{Q}.
\end{equation}
\end{lemma}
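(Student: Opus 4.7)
The strategy is to take $\beta$ to be the Cauchy transform of $\alpha$. More precisely, I would set $\beta := T\alpha$ with the operator $T$ defined in \eqref{eq:defT}. The choice of parameters $P = \kappa = 4/\delta + 2 > 2$ in \eqref{eq:defP} is precisely what is needed so that $\alpha \in L^P(B_2)$, since \Cref{lem:beltramifirstequation} gives $\alpha \in L^{\kappa}(B_2)$. Hence the Cauchy transform is well defined and \Cref{lem:bojarski} directly applies with this value of $P$.

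First, relation \eqref{eq:dzbarT} of \Cref{lem:bojarski} yields $\partial_{\overline{z}} \beta = \alpha$ as required. The $L^\infty$-bound \eqref{eq:boundT} gives $\norme{\beta}_{L^\infty(B_2)} \leq C(P) \norme{\alpha}_{L^P(B_2)}$, and the Hölder bound \eqref{eq:HolderT} gives the estimate with exponent $Q = (P-2)/P = 2/(2+\delta)$, in agreement with \eqref{eq:defQ}.

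The only nontrivial point is to upgrade the bound from $\norme{W_1}_{\infty} + \norme{W_2}_{\infty}$ (as provided by \eqref{eq:boundalpha}) to the sharper quantity $\norme{\hat{W}}_{\infty}$. For this I would trace back through the construction: from the definition \eqref{eq:defwtilde}, using that $\chi$ is bounded, that $|1 + \overline{\gamma}/\gamma| \leq 2$, and that the trigonometric combination satisfies $|{-}\tilde{W}_1 \cos\theta - \tilde{W}_2 \sin\theta + i\tilde{W}_1 \sin\theta - i\tilde{W}_2 \cos\theta| \leq C (|\tilde{W}_1| + |\tilde{W}_2|)$, one gets the pointwise bound $|\alpha| \leq C\, |\tilde{W}|\, \chi$ almost everywhere. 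Since $\chi$ is supported inside $B_{2-c_0}$ by construction \eqref{eq:defyinfunctionh}, the estimate \eqref{eq:lpboundtildeW} then yields $\norme{\alpha}_{L^{\kappa}(B_2)} \leq C \norme{\tilde{W}}_{L^{\kappa}(B_{2-c_0})} \leq C \norme{\hat{W}}_{\infty}$. Combining this refined estimate with the conclusions of \Cref{lem:bojarski} delivers \eqref{eq:boundbeta} and \eqref{eq:HolderTSpecific}.

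There is no substantial obstacle here: the lemma is really a packaging of the standard regularity theory for the Cauchy transform, and the whole point of the earlier parameter choices \eqref{eq:defkappa}, \eqref{eq:defP}, \eqref{eq:defQ} was to align with the hypotheses of \Cref{lem:bojarski}. The mild bookkeeping step is simply observing that the $L^P$-norm of $\alpha$ is controlled by $\norme{\hat{W}}_\infty$ rather than the larger $\norme{W_1}_\infty + \norme{W_2}_\infty$.
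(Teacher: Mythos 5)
Your proposal is correct and follows exactly the paper's approach: the paper's proof is the one-liner ``Set $\beta = T\alpha$, and use \eqref{eq:dzbarT}, \eqref{eq:boundT} and \eqref{eq:HolderT}.'' Your additional bookkeeping --- tracing the bound back through $|1+\overline{\gamma}/\gamma|\leq 2$, the support of $\chi$ in $B_{2-4c_0}\subset B_{2-c_0}$, and \eqref{eq:lpboundtildeW} to land on $\norme{\hat{W}}_\infty$ rather than the nominally coarser $\norme{W_1}_\infty+\norme{W_2}_\infty$ appearing in \eqref{eq:boundalpha} --- is a sound and welcome clarification of what the paper leaves implicit.
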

\begin{proof}
Set $\beta = T \alpha$, and use \eqref{eq:dzbarT}, \eqref{eq:boundT} and \eqref{eq:HolderT}.
\end{proof}

Finally, we have the following non-homogeneous $\partial_{\overline{z}}$-equation.
\begin{lemma}
\label{lem:beltramiequationzeta}
Let us define 
\begin{equation}
\label{eq:defzeta}
\zeta = \exp(- \beta) \gamma \ \text{in}\ B_2.
\end{equation}
Then, we have $\zeta \in H^1(B_2)$ and satisfies the following Beltrami equation
\begin{equation}
\label{eq:equationzeta}
\partial_{\overline{z}} \zeta = \exp(- \beta) [(\partial_{\overline{z}} \chi) h  + \tilde{E}_h]  =: F \ \text{in}\ B_2.
\end{equation}
\end{lemma}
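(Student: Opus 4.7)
The identity to be proved is a direct Leibniz computation: if all derivatives behave classically, then
\begin{equation*}
\partial_{\overline{z}}\zeta = \partial_{\overline{z}}(e^{-\beta}\gamma) = -e^{-\beta}(\partial_{\overline{z}}\beta)\,\gamma + e^{-\beta}\partial_{\overline{z}}\gamma = -e^{-\beta}\alpha\gamma + e^{-\beta}\bigl[\alpha\gamma + (\partial_{\overline{z}}\chi)h + \tilde E_h\bigr],
\end{equation*}
and the $\alpha\gamma$ terms cancel, yielding the announced formula $\partial_{\overline{z}}\zeta = F$. Thus the entire proof reduces to justifying that this Leibniz rule makes sense in the distributional setting, and that the resulting $\zeta$ belongs to $H^1(B_2)$.

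My first step would be to promote the regularity of $\beta$. By \Cref{lem:defomegaandT}, $\beta = T\alpha$ with $\alpha \in L^\kappa(B_2)$, $\kappa = 4/\delta+2 > 2$. Since $\partial_{\overline z}T\alpha = \alpha \in L^\kappa$ and $\partial_z T\alpha$ is given by the Beurling--Ahlfors transform of $\alpha$ (a Calderón--Zygmund operator bounded on $L^\kappa$), one deduces $\beta \in W^{1,\kappa}_{\mathrm{loc}}(B_2)$. Combined with $\beta \in L^\infty(B_2)$, this gives $e^{-\beta} \in L^\infty(B_2) \cap W^{1,\kappa}_{\mathrm{loc}}(B_2)$ with
\begin{equation*}
\partial_{\overline{z}}(e^{-\beta}) = -e^{-\beta}\alpha \in L^\kappa_{\mathrm{loc}}(B_2).
\end{equation*}

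Next I would establish $\zeta \in H^1(B_2)$. Since $\gamma \in H^1(B_2)$ by \Cref{lem:beltramifirstequation}, Sobolev embedding in dimension 2 gives $\gamma \in L^p_{\mathrm{loc}}(B_2)$ for every $p < \infty$. The product $\zeta = e^{-\beta}\gamma$ is then in $L^2(B_2)$ because $e^{-\beta}$ is bounded, and for its distributional derivatives the Leibniz rule
\begin{equation*}
\partial_{\overline{z}}\zeta = e^{-\beta}\partial_{\overline{z}}\gamma + \gamma\,\partial_{\overline{z}}(e^{-\beta}), \qquad \partial_z\zeta = e^{-\beta}\partial_z\gamma + \gamma\,\partial_z(e^{-\beta})
\end{equation*}
is valid: the first summand belongs to $L^2$ since $e^{-\beta} \in L^\infty$ and $\nabla\gamma \in L^2$, and the second summand belongs to $L^2$ by Hölder using $\gamma \in L^{2\kappa/(\kappa-2)}_{\mathrm{loc}}$ and $\nabla(e^{-\beta}) \in L^\kappa_{\mathrm{loc}}$. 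A standard mollification of $\beta$ (or equivalently of $\alpha$, followed by $\beta_n = T\alpha_n$) and passage to the limit provides a rigorous justification of this Leibniz identity.

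Finally, I would substitute $\partial_{\overline{z}}\beta = \alpha$ from \Cref{lem:defomegaandT} and the equation for $\gamma$ from \Cref{lem:beltramifirstequation} into the $\partial_{\overline{z}}$-expression, obtaining the cancellation displayed above and hence \eqref{eq:equationzeta}. The only non-routine point is the mild regularity upgrade on $\beta$; once this is secured, the computation is mechanical. Everything is carried out in $B_2$, and no boundary subtleties arise since all products and derivatives are interpreted in the distributional sense on the open disk.
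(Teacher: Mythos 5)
Your proposal is correct and takes the same approach as the paper, whose own proof is a one-line pointer to the Leibniz computation combining $\partial_{\overline{z}}\beta = \alpha$ with the equation for $\gamma$. The extra work you do on the $W^{1,\kappa}_{\mathrm{loc}}$ regularity of $\beta$ via the Beurling transform and on justifying the distributional product rule is sound and supplies rigor that the paper simply omits.
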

\begin{proof}
This directly comes from \eqref{eq:equationomega} and \eqref{eq:equationgamma}.
\end{proof}

\section{Step 3: The Carleman estimate to the non-homogeneous $\partial_{\overline{z}}$-equation}
\label{sec:carlemanpart}

The goal of this section is to apply a suitable $L^2$-Carleman estimate to the equation satisfied by $\zeta$, see \Cref{lem:beltramiequationzeta} above, in order to deduce the vanishing order estimate for $u$, that is \eqref{eq:observabilityu}. The source terms in \eqref{eq:equationzeta} will be absorbed by the left hand side term of the Carleman estimate, by taking the $s$-parameter sufficiently big in function of $\varepsilon$. The boundary terms will be absorbed by using the assumption on $u$, i.e. \eqref{eq:ucontrolledneartheboundary}, and by taking the $s$-parameter sufficiently big in function of $K$. In order to deduce from the $L^2$-Carleman estimate a $L^{\infty}$-bound on $u$, that is an estimate of $|u(x_{\max})|$, we will finally use maximal regularity estimates for the operator $\partial_{\overline{z}}$.

\subsection{Preliminaries in prevision of the Carleman estimate}

The goal of this first part is to state an elementary $L^2$-Carleman estimate in the two-dimensional setting and to prove very useful estimates for the absorption of the source term in \eqref{eq:equationzeta}. \medskip

For $s \geq 1$, a parameter, let us introduce the notation
\begin{equation}
\psi_s(z)  = - s \log(|z|) + |z|^2.
\end{equation}

First, remark that for every $z \neq 0$,
\begin{equation}
\Delta \psi_s (z) \geq 2.
\end{equation}
We have the following Carleman estimate, \cite[Proposition 2.2]{DF90}.
\begin{prop}
\label{prop:carlemanL2}
Then for every $y \in C_c^{\infty}(B_2\setminus\{0\})$, we have
\begin{equation}
\label{eq:Carlemany}
 \int_{B_2} |y|^2  e^{2 \psi_s(z)} dz \leq C \int_{B_2}|\bar{\partial} y|^2 e^{2 \psi_s(z)} dz.
\end{equation}
\end{prop}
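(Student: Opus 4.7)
The plan is to execute the standard Hörmander-type commutator argument for the $\bar{\partial}$ operator, whose only input is the positivity of $\Delta \psi_s$ already observed. First, I would introduce the conjugated function $v = e^{\psi_s} y$, which still lies in $C_c^{\infty}(B_2 \setminus \{0\})$ since $\psi_s$ is smooth off the origin. A direct chain-rule computation gives $e^{\psi_s} \bar{\partial} y = \bar{\partial} v - (\bar{\partial}\psi_s) v =: L v$, where $L := \bar{\partial} - M_{\bar{\partial}\psi_s}$ and $M_a$ denotes multiplication by $a$. After this change of variable the inequality to prove reduces to the clean estimate $\int_{B_2} |v|^2 \, dz \leq C \int_{B_2} |L v|^2 \, dz$.

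Next, since $\psi_s$ is real-valued, the formal $L^2(dz)$-adjoint of $L$ is $L^* = -\partial - M_{\partial \psi_s}$. Expanding both compositions $LL^*$ and $L^*L$, and using that $\partial$ and $\bar{\partial}$ commute on $C^2$ functions, I would verify the commutator identity
\[
[L, L^*] \, v \;=\; -2 (\partial \bar{\partial} \psi_s) \, v \;=\; -\tfrac{1}{2} (\Delta \psi_s) \, v.
\]
Because $v \in C_c^{\infty}(B_2 \setminus \{0\})$, all integration-by-parts steps are licit and no boundary terms arise, so pairing with $v$ in $L^2$ and using $\langle L L^* v, v \rangle = \| L^* v \|_{L^2}^2$ and $\langle L^* L v, v \rangle = \| L v \|_{L^2}^2$ yields
\[
\| L^* v \|_{L^2}^2 - \| L v \|_{L^2}^2 \;=\; \langle [L, L^*] v, v \rangle \;=\; -\tfrac{1}{2} \int_{B_2} (\Delta \psi_s) \, |v|^2 \, dz.
\]

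Finally, invoking the pointwise bound $\Delta \psi_s \geq 2$ on $B_2 \setminus \{0\}$ (in fact $\Delta \psi_s = 4$ there, since $\Delta \log|z| = 0$ away from the origin and $\Delta |z|^2 = 4$) gives
\[
\| L v \|_{L^2}^2 \;\geq\; \| L^* v \|_{L^2}^2 + \int_{B_2} |v|^2 \, dz \;\geq\; \int_{B_2} |v|^2 \, dz,
\]
and undoing the substitution $v = e^{\psi_s} y$ produces exactly \eqref{eq:Carlemany} with explicit constant $C = 1$. I do not expect any genuine obstacle: the only point requiring care is that the Carleman weight $\psi_s$ is singular at the origin, which forces the support restriction $y \in C_c^{\infty}(B_2 \setminus \{0\})$ in the statement and ensures that every object appearing in the commutator identity ($\psi_s$, $\partial\psi_s$, $\bar{\partial}\psi_s$, $Lv$, $L^*v$) is smooth on the support of $v$, so that all the integrations by parts above are rigorous.
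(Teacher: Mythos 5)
Your proof is correct. The paper itself does not prove \Cref{prop:carlemanL2}; it simply invokes it as \cite[Proposition 2.2]{DF90}. Your argument is the standard Hörmander--Wirtinger commutator identity for the conjugated operator $L=\bar\partial - M_{\bar\partial\psi_s}$: the substitution $v=e^{\psi_s}y$, the computation $L^*=-\partial-M_{\partial\psi_s}$, the identity $[L,L^*]=-\tfrac12(\Delta\psi_s)\,\mathrm{Id}$, and the pairing giving $\|Lv\|_{L^2}^2=\|L^*v\|_{L^2}^2+\tfrac12\int(\Delta\psi_s)|v|^2$ are all checked correctly, and together with $\Delta\psi_s\ge 2$ on $B_2\setminus\{0\}$ (indeed $\Delta\psi_s=4$ there, since $\log|z|$ is harmonic away from the origin) they yield \eqref{eq:Carlemany} with $C=1$. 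The support restriction $y\in C_c^\infty(B_2\setminus\{0\})$ keeps all quantities smooth and makes the integrations by parts legitimate, as you note. This is in all likelihood the same mechanism underlying the proof in \cite{DF90}, so your argument provides a self-contained justification of a statement the paper leaves to the reference.
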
 

For the next, let us introduce the following notation. Let $a \in B_2$ and $0<r_1<r_2$, then we denote by $C(a,r_1,r_2) = \{ z \in B_2\ ;\ r_1 < |z-a| < r_2\}$, i.e. the annulus centered in $a$ with inner radius $r_1$ and outer radius $r_2$.

In order to deal with the local term in the next application of the Carleman estimate, we need the following lemma.
\begin{lemma}
\label{lem:estimationlocCarleman}
For $z' \in B(x_j', 4 \varepsilon')$, $z \in C(x_j', 6 \varepsilon', 8 \varepsilon')$ with $|z| \leq |z'| - 2 \varepsilon'$, the following estimates hold
\begin{align}
&\frac{1}{C} |h (z)| \leq |h(z')| \leq C  |h(z)|,\label{eq:harnacklocal}\\
&\frac{1}{C} |\exp(-\beta (z))| \leq |\exp(-\beta (z'))| \leq C  |\exp(-\beta (z))|,\label{eq:expTwLlocal}\\
& \exp(2\psi_s(z')) \leq C \exp(-cs \varepsilon) \exp(2\psi_s(z)),\label{eq:carlemanweightlocal}
\end{align}
\end{lemma}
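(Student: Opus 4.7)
The three estimates rest on quite different ingredients, so I would address each in turn. The Carleman weight estimate \eqref{eq:carlemanweightlocal} reduces to elementary algebra; the estimate \eqref{eq:expTwLlocal} follows from the Hölder bound on $\beta$ established in \Cref{lem:defomegaandT}; and the Harnack-type bound \eqref{eq:harnacklocal} is obtained by transferring the standard Harnack inequality for $u$ through the definitions $v=u/\varphi$ and $h = v\circ L^{-1}$. The main obstacle will be ensuring, in the proof of \eqref{eq:harnacklocal}, that the preimage $L^{-1}(B(x_j',8\varepsilon'))$ is contained in a ball where $u$ has constant sign, which is why the specific choice $C_0 = 18\cdot 32^2$ was fixed back in \Cref{lem:constructionperforated}.

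For \eqref{eq:carlemanweightlocal} I would expand
\begin{equation*}
\psi_s(z') - \psi_s(z) = -s\log(|z'|/|z|) + (|z'|^2-|z|^2).
\end{equation*}
The hypothesis $|z|\leq|z'|-2\varepsilon'$ together with $|z|\leq 2$ yields $|z'|/|z|\geq 1+\varepsilon'$, and since $\varepsilon'$ is small, $\log(1+\varepsilon')\geq \varepsilon'/2$. On the other hand $|z'|^2-|z|^2 \leq (|z'|+|z|)(|z'|-|z|)\leq 48\varepsilon'$ because both points lie in $B_2$. Hence $2(\psi_s(z')-\psi_s(z))\leq -s\varepsilon'+96\varepsilon'$, and using $\varepsilon'=32\varepsilon$ one gets, for all $s\geq 1$, $2(\psi_s(z')-\psi_s(z))\leq -16s\varepsilon + 96\cdot 32$, which absorbs into $\log C -cs\varepsilon$ upon exponentiation.

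For \eqref{eq:expTwLlocal}, both points lie in $B(x_j',8\varepsilon')$, so $|z-z'|\leq 16\varepsilon'$. The Hölder estimate \eqref{eq:HolderTSpecific} with exponent $Q=2/(2+\delta)$ gives
\begin{equation*}
|\beta(z)-\beta(z')|\leq C\|\hat W\|_\infty (16\varepsilon')^Q \leq C\|\hat W\|_\infty \varepsilon^Q.
\end{equation*}
Since $\|\hat W\|_\infty \leq \|W_1\|_\infty+\|W_2\|_\infty$ and the standing condition \eqref{eq:epsPetitPoincareGeneralFinStep2} implies $\|\hat W\|_\infty\varepsilon^Q\leq C$, the quantity $|\beta(z)-\beta(z')|$ is uniformly bounded, and \eqref{eq:expTwLlocal} follows from $|\exp(\beta(z)-\beta(z'))|\leq \exp(|\beta(z)-\beta(z')|)$ and its symmetric counterpart.

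For \eqref{eq:harnacklocal}, the starting point is that $B(x_j,\varepsilon)$ is $C_0\varepsilon$-separated from the nodal set $Z$ by \Cref{lem:constructionperforated}, so $u$ does not vanish on $B(x_j,(C_0+1)\varepsilon)$ and has constant sign there. I then use Mori's estimate \eqref{eq:Ldistance} applied to the $K$-quasiconformal map $L^{-1}$: for any $w\in B(x_j',16\varepsilon')$,
\begin{equation*}
|L^{-1}(w)-x_j|\leq 32(8\varepsilon')^{1/K},
\end{equation*}
and the fact that $(K-1)\log(2/\varepsilon)\leq C$ (as in the proof of \Cref{lem:imagequasiconformal}, using \eqref{eq:epsPetitPoincareGeneralFinStep2}) shows $(8\varepsilon')^{1/K}\leq C\varepsilon'$, so $L^{-1}(B(x_j',16\varepsilon'))\subset B(x_j,c_1\varepsilon)$ with $c_1<C_0/2$ thanks to the choice $C_0=18\cdot 32^2$. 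Rescaling $u(x_j+\varepsilon\, \cdot)$ transforms \eqref{eq:equationuLandisquantitativelocal} into an elliptic equation on $B(0,C_0)$ whose coefficients are bounded by $c$ (by \eqref{eq:epsPetitPoincareGeneralFinStep2}), so Harnack's inequality applies to $|u|$ and yields $|u(L^{-1}(z))|\sim|u(L^{-1}(z'))|$. Finally, $\varphi\in[1/2,3/2]$ (from \eqref{Est-Size-tilde-varphiOeps} and the smallness of $\varepsilon$) gives $|h|=|u/\varphi\circ L^{-1}|\sim |u\circ L^{-1}|$, completing the proof of \eqref{eq:harnacklocal}.
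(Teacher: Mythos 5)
Your proof is correct and follows essentially the same route as the paper: the Hölder bound \eqref{eq:HolderTSpecific} on $\beta$ for \eqref{eq:expTwLlocal}, elementary manipulation of $-s\log|z|$ for \eqref{eq:carlemanweightlocal}, and a Harnack inequality for $u$ at scale $\varepsilon$ transferred to $h$ via the quasiconformal Lipschitz control for \eqref{eq:harnacklocal}. The only cosmetic difference is that you re-derive the scale-$\varepsilon$ Lipschitz behaviour of $L^{-1}$ directly from Mori's theorem \eqref{eq:Ldistance}, whereas the paper simply invokes the already-established estimate \eqref{eq:LipschitzL} of \Cref{lem:imagequasiconformal}; you are also slightly more explicit than the paper in accounting for the $|z|^2$ part of $\psi_s$ in \eqref{eq:carlemanweightlocal}, which the paper silently absorbs into the constant (the minor arithmetic slip in your constants there is harmless).
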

Heuristically, \Cref{lem:estimationlocCarleman} tells us that $h$ and $\exp(-\beta)$ do not vary too much near the disks $B(x_j',\varepsilon')$, this is the purpose of \eqref{eq:harnacklocal} and \eqref{eq:expTwLlocal} while there is ball contained in $C(x_j', 6 \varepsilon', 8 \varepsilon')$ where the Carleman weight is $\exp(c s \varepsilon)$-bigger than the Carleman weight in $B(x_j',4 \varepsilon')$, this is the purpose of \eqref{eq:carlemanweightlocal}.
\begin{proof}
We use the fact that the disks $B(x_j', \varepsilon')$ given by Lemma \ref{lem:imagequasiconformal} are  $16 \varepsilon'$-separated from $L(Z)$. So $L^{-1}(B(x_j', 10  \varepsilon')) \cap Z = \emptyset$. Moreover, from \eqref{eq:LipschitzL} and $|z-z'| \geq 2 \varepsilon' = 64 \varepsilon$, we have
$$ |L^{-1}(z)-L^{-1}(z')| \leq 32 |z-z'| \leq C \varepsilon,$$
so by a direct application of the Harnack's inequality of \cite[Theorem 8.20]{GT83} (or see \Cref{lem:harnackscaleeps}), by using the property
\begin{equation}
\varepsilon + \varepsilon \|W_1\|_{\infty} + \varepsilon \|W_2\|_{\infty} + \varepsilon^2 \|V\|_{\infty} \leq c,
\end{equation}
coming from \eqref{eq:epsPetitPoincareGeneralFinStep2}, we obtain
\begin{equation}
\frac{1}{C} |u\circ L^{-1} (z)| \leq |u\circ L^{-1}(z')| \leq C  |u\circ L^{-1}(z)|,
\end{equation}
so the same type of estimate for $h$, i.e. \eqref{eq:harnacklocal} by using the definitions of $h$ in \eqref{eq:defh} and $v$ in \eqref{Def-v}, together with the estimate on $\varphi$ in \eqref{Est-Size-tilde-varphiOeps}.

The second estimate \eqref{eq:expTwLlocal} comes from the Hölder's estimate on $\beta$ i.e. \eqref{eq:HolderTSpecific} and the Lipschitz estimate at scale $\varepsilon$, i.e. \eqref{eq:LipschitzL}, indeed we have from the definition of $Q$ in \eqref{eq:defQ} and \eqref{eq:epsPetitPoincareGeneralFinStep2},
\begin{equation}
|\beta(z) - \beta (z')| \leq C \|\hat{W}\|_{\infty} |z-z'|^{Q}   
\leq C \varepsilon^{Q} \|\hat{W}\|_{\infty} \leq c,
\end{equation}
so 
\begin{equation}
|\exp(\beta(z) - \beta(z'))| \leq \exp(|\beta(z) - \beta (z')|) \leq \exp(c).
\end{equation}

The third estimate \eqref{eq:carlemanweightlocal} comes from
\begin{align*}
\exp(-s \log(|z'|)) &= \exp(-s (\log(|z'|) - \log(|z|))) \exp(-s \log(|z|))\\
&\leq \exp(-s (\log(|z|+2\varepsilon') - \log(|z|)))  \exp(-s \log(|z|))\\
& \leq \exp (-s \log(1 + 2 \varepsilon'/|z|)) \exp(-s \log(|z|)) \\
&  \leq \exp(-  s \varepsilon') \exp(-s \log(|z|)).
\end{align*}

This concludes the proof.
\end{proof}
In order to deal with the non-local term in the next application of the Carleman estimate, we need the following lemma.
\begin{lemma}
\label{lem:estimationnonlocCarleman}
For $\varepsilon' \leq |z|$, $z'= z + t \frac{z}{|z|}$ with $\varepsilon' \leq t \leq 2$, the following estimate holds
\begin{align}
\label{eq:estimationbetanonlocal}
|\exp(-\beta (z'))| &\leq \exp(C\|\hat{W}\|_{\infty}t^Q) |\exp(-\beta (z))|,\\
 \exp(2\psi_s(z')) &\leq C \exp(-cs t) \exp(2\psi_s(z)).\label{eq:estimationpoidsnonlocal}
 \end{align}
Moreover we have
\begin{align}
\label{eq:estimationgradientcutoff}
 \norme{\nabla h}_{L^{p}(B(x_j', 4 \varepsilon'))} &\leq C \varepsilon^{-1} \norme{h}_{L^{\infty}(B(x_j', 8 \varepsilon'))}\qquad \forall 1 \leq p < +\infty,\\
 \norme{\nabla h}_{L^p(C(0,2-16 c_0,2-4 c_0))} & \leq C \norme{\hat{W}}_{\infty} \norme{h}_{L^{\infty}(B(0,2-2 c_0))}\qquad \forall 1 \leq p < +\infty.\label{eq:estimationgradientcutoffboundary}
\end{align}
\end{lemma}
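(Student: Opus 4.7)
The plan is to prove the four estimates separately: the first two follow from the Hölder estimate on $\beta$ established in \Cref{lem:defomegaandT} and an explicit computation on $\psi_s$, while the last two are consequences of local $W^{1,p}$-regularity for $h$ combined with the uniform $L^{\kappa}$-bound on $\tilde{W}$ from \eqref{eq:lpboundtildeW}.

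For \eqref{eq:estimationbetanonlocal}, I apply \eqref{eq:HolderTSpecific} with $z_1 = z'$ and $z_2 = z$: since $|z'-z| = t$, this yields $|\beta(z') - \beta(z)| \leq C\norme{\hat{W}}_{\infty} t^Q$, and hence
\[
|\exp(-\beta(z'))| = |\exp(-\beta(z))| \cdot |\exp(\beta(z) - \beta(z'))| \leq \exp(C\norme{\hat{W}}_{\infty} t^Q)\,|\exp(-\beta(z))|.
\]
For \eqref{eq:estimationpoidsnonlocal}, I use $|z'| = |z| + t$ to write
\[
\psi_s(z') - \psi_s(z) = -s\log(1 + t/|z|) + 2t|z| + t^2.
\]
Since $t, |z| \leq 2$, the polynomial part is bounded by a fixed constant. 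For the logarithmic part, $t \leq 2$ gives $t/2 \leq 1$, so by concavity $\log(1 + t/|z|) \geq \log(1 + t/2) \geq (\log 2)(t/2) = ct$. Consequently $2(\psi_s(z') - \psi_s(z)) \leq -cst + C$, and exponentiating yields \eqref{eq:estimationpoidsnonlocal}.

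For \eqref{eq:estimationgradientcutoff}, the approach is a rescaling argument. The $16\varepsilon'$-separation property from \Cref{lem:imagequasiconformal} ensures that $h$ satisfies the elliptic equation \eqref{eq:equationh} in the annular region $B(x_j', 16\varepsilon') \setminus B(x_j', \varepsilon')$. Setting $\tilde{h}(y) = h(x_j' + \varepsilon' y)$ on $\{1 < |y| < 16\}$, the rescaled function solves $-\Delta \tilde{h} - \nabla \cdot(W_r \tilde{h}) = 0$ with $W_r(y) = \varepsilon' \tilde{W}(x_j' + \varepsilon' y)$. By \eqref{eq:lpboundtildeW} and the change of variables, $\norme{W_r}_{L^{\kappa}(\{|y|<16\})}$ is bounded uniformly in $\varepsilon$ by $C\norme{\hat{W}}_{\infty}$. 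Since $\kappa > 2$, standard interior $W^{1,p}$-regularity (Caccioppoli combined with the Calderón--Zygmund estimate applied to $-\Delta \tilde{h} = \nabla \cdot(W_r \tilde{h})$) yields $\norme{\nabla \tilde{h}}_{L^p(\{2 < |y| < 4\})} \leq C\norme{\tilde{h}}_{L^\infty(\{1 < |y| < 8\})}$. Reverting the scaling introduces a factor $(\varepsilon')^{(2-p)/p} \leq C\varepsilon^{-1}$, which is valid since $p \geq 1$ and $\varepsilon' \leq 1$; the estimate extends to the full ball $B(x_j', 4\varepsilon')$ because $h$ is only defined on its intersection with $L(\Omega_\varepsilon')$, namely the annular part.

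Finally, for \eqref{eq:estimationgradientcutoffboundary}, I apply interior $W^{1,p}$-regularity directly: rewriting \eqref{eq:equationh} as $-\Delta h = \nabla \cdot(\tilde{W} h)$ on the annulus $C(0, 2-16c_0, 2-4c_0) \subset B_{2-c_0}$, standard $L^p$-estimates give
\[
\norme{\nabla h}_{L^p(C(0,2-16c_0,2-4c_0))} \leq C\bigl(\norme{\tilde{W}h}_{L^p(B_{2-c_0})} + \norme{h}_{L^p(B_{2-c_0})}\bigr) \leq C\norme{\hat{W}}_{\infty}\norme{h}_{L^\infty(B_{2-2c_0})},
\]
the final step using \eqref{eq:lpboundtildeW} and absorbing the lower-order term into the drift contribution. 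The main obstacle of the proof is the rescaling argument in \eqref{eq:estimationgradientcutoff}: the uniform-in-$\varepsilon$ control of the $L^{\kappa}$-norm of the rescaled drift $W_r$ is crucial, and the hypothesis $\kappa > 2$ is precisely what guarantees that the interior $W^{1,p}$-regularity constants remain bounded independently of $\varepsilon$, producing a clean $\varepsilon^{-1}$ scaling.
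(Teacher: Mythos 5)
Your proposal is correct and follows essentially the same route as the paper. The first two estimates are handled identically (the Hölder bound on $\beta$ from \Cref{lem:defomegaandT}, and the elementary computation on $\log(1+t/|z|)$ — you are slightly more careful in explicitly bounding the harmless $|z|^2$ contribution to $\psi_s$, which the paper silently folds into the constant $C$). For \eqref{eq:estimationgradientcutoff} and \eqref{eq:estimationgradientcutoffboundary} the paper simply invokes the appendix lemmas \Cref{lem:gradientdivterm} and \Cref{lem:gradientdivtermscale1}, which are exactly the rescaled Meyers $W^{1,p}$-estimate for $-\Delta h = \nabla\cdot(\tilde W h)$ that you unfold by hand; your scaling exponent $(\varepsilon')^{2/p-1}\leq(\varepsilon')^{-1}$ and the uniform-in-$\varepsilon$ control of the rescaled drift match the content of those lemmas. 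One minor remark: in your last display, "absorbing the lower-order term into the drift contribution" is not literally valid if $\norme{\hat W}_{\infty}$ is small — the honest bound is $C(\norme{\hat W}_{\infty}+1)\norme{h}_{L^\infty}$ — but this imprecision is already present in the paper's statement \eqref{eq:estimationgradientcutoffboundary} and is harmless in the only place it is used, namely in absorbing the boundary contribution into the term $\mathcal B$ in the proof of \Cref{lem:importantcarleman}.
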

Heuristically, the first part of \Cref{lem:estimationnonlocCarleman} tells us how much $\exp(-\beta(z'))$, respectively $\psi_s(z')$, increases, respectively decreases, along the line $z'= z + t \frac{z}{|z|}$. This exponential decreasing of the Carleman weight would be the key point to compensate the exponential increasing of the multiplier $\exp(-\beta)$, for absorbing the nonlocal term.
\begin{proof}
The first estimate comes from the Hölder's estimate on $\beta$, indeed we have from \eqref{eq:HolderTSpecific}
\begin{equation}
|\beta (z) - \beta(z')| \leq C \|\hat{W}\|_{\infty} | z- z'|^{Q}   
\leq C  \|\hat{W}\|_{\infty} t^Q,
\end{equation}
so 
\begin{equation}
|\exp(\beta (z) - \beta(z'))| \leq \exp(|\beta (z) - \beta(z')|) \leq \exp(C\|\hat{W}\|_{\infty} t^Q).
\end{equation}
The last estimate comes from
\begin{align*}
\exp(-s \log(|z'|)) &
= \exp(-s (\log(|z'|) - \log(|z|))) \exp(-s \log(|z|))\\
&\leq \exp(-s (\log(|z+t  \frac{z}{|z|}|) - \log(|z|)))  \exp(-s \log(|z|))\\
& \leq \exp (-s \log(1 + t/|z|)) \exp(-s \log(|z|)) \\
&  \leq \exp(- 2^{-1} s t) \exp(-s \log(|z|)).
\end{align*}

To prove the inequality \eqref{eq:estimationgradientcutoff}, we apply \Cref{lem:gradientdivterm} to $h$, satisfying \eqref{eq:equationh} together with \eqref{eq:lpboundtildeW} replacing $\kappa$ by $1 \leq p < +\infty$, see \Cref{lem:imagequasiconformal}. 

In the same spirit, the inequality \eqref{eq:estimationgradientcutoffboundary} is an application of \Cref{lem:gradientdivtermscale1}.

This concludes the proof.
\end{proof}

\subsection{A Carleman estimate to a non-homogeneous $\partial_{\overline{z}}$ equation}

The goal of this part is now to apply the Carleman estimate from \Cref{prop:carlemanL2} to a cut-off version of $\zeta$, and then use \Cref{lem:estimationlocCarleman}, \Cref{lem:estimationnonlocCarleman} to absorb the source term of the equation satisfied by $\zeta$ in \eqref{eq:equationzeta}.

We split $J$ such that
\begin{equation}
\label{eq:splitJ}
J = J_1 \cup J_2 \cup (J \setminus (J_1 \cup J_2),
\end{equation}
with $J_1 :=\{j \in J\ ;\ B(x_j , 10\varepsilon') \subset B(0,2-8 c_0)\ \text{and}\ B(0,r') \cap B(x_j, 10 \varepsilon') = \emptyset \}$, $J_2 :=\{j \in J\ ;\ B(x_j , 10\varepsilon') \subset B(0,2-8 c_0)\ \text{and}\ B(0,r') \cap B(x_j, 10 \varepsilon') \neq \emptyset \}$. To simplify the following proof, we will assume that $r/2 < \varepsilon$, then one can assume $r' <\varepsilon'$ because of \eqref{eq:defr'} so we have
%Indeed, by only focusing on this case, the proof of \eqref{eq:observabilityu} for $r'< \varepsilon'$ directly leads to \eqref{eq:observabilityu} for $r' \geq \varepsilon'$ recalling that $r'=cr^2$. Then
\begin{equation}
\label{eq:assumptionr'}
B(0,r') \cap B(x_j, 10 \varepsilon') = \emptyset\qquad \forall j \in J,
\end{equation}
then $J_2 = \emptyset$. The other case, i.e. $r/2 > \varepsilon$ leading to $r' = cr$ by \eqref{eq:defr'} is a straightforward adaptation of the next arguments by using that $B(x_j, 10 \varepsilon') \subset B(0,(3/2) r')$ for every $j \in J_2$, recalling the form of $\varepsilon$, therefore all the terms coming from perforated disks in $J_2$ would be put in the observation term.\\

Let us then introduce $\eta$ a cut-off function such that
\begin{align}
\eta(x) &= 1 \qquad \forall r' \leq |x| \leq 2-8 c_0,\\
 \eta(x) &= 0\qquad \forall |x| \leq r'/2\ \text{or}\ 2-4 c_0 \leq |x|.
\end{align}
The goal of this part is to prove the following result.
\begin{lemma}
\label{lem:importantcarleman}
For every $p \geq 2$, there exists a positive constant $C \geq 1$ such that for every 
\begin{equation}
\label{eq:sfunctionvarepsilon}
s \geq C \varepsilon^{-1} \log(C \varepsilon^{-1}),
\end{equation}
the following estimate holds
\begin{align}
&\norme{\eta \zeta e^{\psi_s}}_{L^2(B_2)}^2 + \norme{\eta (\partial_{\overline{z}}\zeta) e^{\psi_s}}_{L^2(B_2)}^2+ \norme{\eta (\partial_{\overline{z}}\zeta) e^{\psi_s}}_{L^p(B_2)}^2\notag\\
& \leq C \Bigg( r'^{-2} \norme{\zeta e^{\psi_s}}_{L^2(C(0,r'/2,r'))}^2
+  \exp\left(C \varepsilon^{-1} \right) \exp(2\psi_s(2-16 c_0)) \|u\|_{L^{\infty}(B_2)}^2\Bigg).\label{eq:carlemanestimatenoboundary}
\end{align}
\end{lemma}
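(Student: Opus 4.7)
The plan is to apply the Carleman estimate of \Cref{prop:carlemanL2} to $y=\eta\zeta$, which, after a standard regularization of $\zeta\in H^1(B_2)$, lies in $C_c^\infty(B_2\setminus\{0\})$ by the support properties of $\eta$. Using $\partial_{\overline{z}}y=(\partial_{\overline{z}}\eta)\zeta+\eta F$ with $F$ as in \eqref{eq:equationzeta}, the estimate \eqref{eq:Carlemany} yields
\[
\|\eta\zeta e^{\psi_s}\|_{L^2(B_2)}^2\leq C\|(\partial_{\overline{z}}\eta)\zeta e^{\psi_s}\|_{L^2(B_2)}^2+C\|\eta F e^{\psi_s}\|_{L^2(B_2)}^2.
\]
The support of $\partial_{\overline{z}}\eta$ is contained in $C(0,r'/2,r')\cup C(0,2-8c_0,2-4c_0)$. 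On the inner annulus $|\partial_{\overline{z}}\eta|\leq Cr'^{-1}$ produces the first term of the right-hand side of \eqref{eq:carlemanestimatenoboundary}. On the outer annulus the weight is bounded by $e^{2\psi_s(2-16c_0)}e^{-cs}$ by radial monotonicity of $\psi_s$, and $|\zeta|\lesssim\|h\|_{L^\infty(B(0,2-2c_0))}\lesssim\|u\|_{L^\infty(B_2)}$ by \Cref{lem:defomegaandT} and \Cref{prop:constructionmultiplier}, yielding the boundary term (with the loss $e^{Cs}\leq e^{C\varepsilon^{-1}}$ absorbed in the prefactor).

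It remains to bound $\|\eta F e^{\psi_s}\|_{L^2(B_2)}^2$ by the right-hand side of \eqref{eq:carlemanestimatenoboundary} plus a small fraction of $\|\eta\zeta e^{\psi_s}\|_{L^2(B_2)}^2$. Write $F=e^{-\beta}[(\partial_{\overline{z}}\chi)h+\tilde E_h]$ and split $\partial_{\overline{z}}\chi$ into its perforation-supported part (in $\bigcup_{j\in J_1}C(x_j',3\varepsilon',4\varepsilon')$) and its $\partial\xi$-supported part near $\partial B_2$; the latter is treated exactly as the boundary contribution above. For the local term $(\partial_{\overline{z}}\chi)h$ near a given $x_j'$, \Cref{lem:estimationlocCarleman} lets us compare any $z'\in B(x_j',4\varepsilon')$ with points $z\in C(x_j',6\varepsilon',8\varepsilon')$ satisfying $|z|\leq|z'|-2\varepsilon'$: on such pairs $|h(z')|\lesssim|h(z)|$, $|e^{-\beta(z')}|\lesssim|e^{-\beta(z)}|$, and $e^{2\psi_s(z')}\leq Ce^{-cs\varepsilon}e^{2\psi_s(z)}$. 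Since $\chi\equiv 1$ on that annulus, $|e^{-\beta}h|\lesssim|\zeta|+|\tilde h|$, the $\tilde h$-contribution being handled by a further iteration of the same shift. Averaging over the shift and absorbing the polynomial loss $|\partial_{\overline{z}}\chi|^2\sim\varepsilon^{-2}$ into the exponential gain $e^{-cs\varepsilon}$ — which requires precisely $s\geq C\varepsilon^{-1}\log(C\varepsilon^{-1})$ — controls the local contribution by a small fraction of $\|\eta\zeta e^{\psi_s}\|_{L^2(B_2)}^2$.

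The non-local term $e^{-\beta}\tilde E_h=-e^{-\beta}e^{i\theta}E_h/(2\rho)$ is the main obstacle. Here $E_h(\rho,\theta)$ is a radial integral in $s\in[0,\rho]$ whose integrand is supported either on radial slices crossing some $C(x_j',3\varepsilon',4\varepsilon')$ or on the outer annulus. For fixed $\theta$ and $\rho$ in the support of $\eta$, set $z=se^{i\theta}$ and $z'=\rho e^{i\theta}=z+(\rho-s)z/|z|$; the radial separation $\rho-s\geq 2\varepsilon'$ whenever $z$ belongs to the perforation support. \Cref{lem:estimationnonlocCarleman} then gives $|e^{-\beta(z')}|\leq e^{C\|\hat W\|_{\infty}(\rho-s)^Q}|e^{-\beta(z)}|$ and $e^{2\psi_s(z')}\leq Ce^{-cs(\rho-s)}e^{2\psi_s(z)}$. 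Because $\psi_s$ is radial, the resulting kernel decays like $e^{-cs(\rho-s)}$ uniformly in $\theta$; a Minkowski/Schur estimate in the radial variable, combined with the $L^p$ control on $\nabla h$ from \eqref{eq:estimationgradientcutoff} and \eqref{eq:estimationgradientcutoffboundary} (entering $E_h$ through $\partial_\rho h$), dominates the non-local contribution by a small fraction of $\|\eta\zeta e^{\psi_s}\|_{L^2(B_2)}^2$ plus the boundary term.

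The $L^2$ and $L^p$ bounds on $\eta(\partial_{\overline{z}}\zeta)e^{\psi_s}=\eta F e^{\psi_s}$ appearing on the left-hand side of \eqref{eq:carlemanestimatenoboundary} are then obtained by repeating the same local and non-local splitting applied directly rather than through the Carleman estimate, using \eqref{eq:estimationgradientcutoff}--\eqref{eq:estimationgradientcutoffboundary} to trade $h$ for $\nabla h$ at cost $\varepsilon^{-1}$, still beaten by $e^{-cs\varepsilon}$. The entire reason for defining $\tilde h$ in polar coordinates in \Cref{lem:approximatestreampolar} surfaces here: only then does the shift $z\mapsto z'$ act purely in the radial direction along which the Carleman weight contracts, making the uniform-in-$\theta$ bound for the non-local term possible.
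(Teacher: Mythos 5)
Your overall approach — apply \Cref{prop:carlemanL2} to $y=\eta\zeta$, split $\partial_{\overline z}y$ into the $\partial_{\overline z}\eta$ contribution (inner annulus giving the observation, outer annulus giving the boundary term) and $\eta F$, then absorb the local and non-local pieces of $F$ using \Cref{lem:estimationlocCarleman} and \Cref{lem:estimationnonlocCarleman} with the threshold $s\geq C\varepsilon^{-1}\log(C\varepsilon^{-1})$ — is the same route the paper takes. However, there is a genuine gap in your absorption of the local term, and a misstated inequality in the boundary part.

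\textbf{Local term.} To close the De-Giorgi-style absorption you need to dominate $e^{\psi_s}|e^{-\beta}h|$ on each annulus $C(x_j',6\varepsilon',10\varepsilon')$ by $|\eta\zeta e^{\psi_s}|$ itself, so that the resulting term can be absorbed to the left. You write $|e^{-\beta}h|\lesssim|\zeta|+|\tilde h|$ and then propose to handle the $\tilde h$-contribution by ``a further iteration of the same shift''. This is both unnecessary and not a valid termination argument: $\tilde h$ is a radial integral of $\nabla h$, so it is a nonlocal quantity and not obviously controlled by yet another local Harnack comparison; the proposed iteration has no stated stopping rule. The observation you are missing is that $h$ and $\tilde h$ are both real-valued (because $u$, $\varphi$, and the stream function integrand are real), so on the annulus where $\chi\equiv1$ one has
\begin{equation*}
|\gamma|=|\chi h+i\tilde h|\geq|\mathrm{Re}\,\gamma|=|\chi h|=|h|,
\end{equation*}
hence $|e^{-\beta}h|\leq|e^{-\beta}\gamma|=|\zeta|$ pointwise, with no $\tilde h$ term at all. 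This one-line reality argument is exactly what closes Step 2 of the paper's proof and is what makes the absorption of the polynomial loss $\varepsilon^{-2}$ by $e^{-cs\varepsilon}$ effective.

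\textbf{Boundary term.} The inequality ``$e^{Cs}\leq e^{C\varepsilon^{-1}}$'' is backwards: under \eqref{eq:sfunctionvarepsilon}, $e^{Cs}$ dominates $e^{C\varepsilon^{-1}}$. The prefactor $\exp(C\varepsilon^{-1})$ in \eqref{eq:carlemanestimatenoboundary} does not come from any loss in $s$; it comes from bounding the multiplier $|e^{-\beta}|\leq e^{C\|\hat W\|_\infty}$ combined with \eqref{eq:epsPetitPoincareGeneralFinStep2}, which gives $\|\hat W\|_\infty\leq C\varepsilon^{-1}$ (up to log factors). The exponential \emph{gain} from the Carleman weight on the outer annulus, $e^{-cs}$, is not needed for the boundary term and the paper does not use it there; what is used is the assumption \eqref{eq:ucontrolledneartheboundary} later in Section 4.3, not in this lemma.
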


\begin{proof}
For the proof, to simplify the notations, we will denote
\begin{equation}
\label{eq:defB}
\mathcal B = C\exp\left(C \varepsilon^{-1} \right) \exp(2\psi_s(2-16 c_0)) \|u\|_{L^{\infty}(B_2)}^2.
\end{equation}
The proof is then divided into several steps.\medskip

\textbf{Step 1: Application of the Carleman estimate.} Let us apply the Carleman estimate \eqref{eq:Carlemany} to $y=\eta \zeta$. We obtain
\begin{multline}
\label{eq:applicationcarleman}
 \int_{B_2} |y|^2  e^{2\psi_s(z)} dz  \leq C \int_{B_{2}} \eta^2 |\partial_{\overline{z}} \zeta|^2 e^{2\psi_s(z)} dz \\
  + C (r')^{-2} \int_{r'/2 \leq |z| \leq r'} |\zeta|^2 e^{2\psi_s(z)} dz + C  \int_{2-8 c_0\leq |z| \leq 2} |\zeta|^2 e^{2 \psi_s(z)} dz.
\end{multline}
Let us first estimate the first right hand side term in \eqref{eq:applicationcarleman}. We have from \eqref{eq:equationzeta} the following estimate
\begin{align}
\norme{\eta (\partial_{\overline{z}}\zeta) e^{\psi_s}}_{L^2(B_2)}^2
&  \leq \norme{\eta e^{\psi_s} \exp(-  \beta ) [(\partial_{\overline{z}} \chi) h  + \tilde{E}_h] }_{L^2(B_2)}^2\notag\\
&\leq C  \norme{\eta e^{\psi_s} \exp(-  \beta ) |\nabla \chi | h}_{L^2(B_2)}^2
 + C  \norme{\eta e^{\psi_s} \exp(-  \beta ) \tilde{E}_h }_{L^2(B_2)}^2.\label{eq:decoupagerighthandside}
\end{align}

\textbf{Step 2: Absorption of the local term.} The first term on the right hand side of \eqref{eq:decoupagerighthandside} is called the local term and is estimated in the next paragraph.

By recalling the definition of $\chi$ in \eqref{eq:defyinfunctionh}, we have
\begin{equation}
\label{eq:estimationchi}
|\nabla \chi| \leq C \varepsilon^{-1}.
\end{equation}
So we get by \eqref{eq:splitJ}, the bound on $\beta$ i.e. \eqref{eq:boundbeta}, the choice of $\varepsilon$ in \eqref{eq:epsPetitPoincareGeneralFinStep2} and the definition of $h$ in \Cref{lem:harmonich}, 
\begin{align}
&\norme{\eta e^{\psi_s} \exp(-  \beta ) |\nabla \chi | h}_{L^2(B_2)}^2\\
& \leq C \varepsilon^{-2} \sum_{j \in J_1}  \norme{e^{\psi_s}  \exp(-  \beta ) h}_{L^2(B(x_j',4 \varepsilon'))}^2\notag
+ C \varepsilon^{-2} \norme{ e^{\psi_s} \exp(-  \beta ) h}_{L^2(C(0,2-16 c_0, 2-4 c_0))}^2\notag\\
&\leq C \varepsilon^{-2} \sum_{j \in J_1}  \norme{e^{\psi_s(z)}  \exp(-  \beta ) h}_{L^2(B(x_j',4 \varepsilon'))}^2 + \mathcal{B}.\label{eq:estimatelocal1}
\end{align}
Then, we use  the estimates \eqref{eq:harnacklocal}, \eqref{eq:expTwLlocal} and \eqref{eq:carlemanweightlocal} of \Cref{lem:estimationlocCarleman} to easily get
\begin{multline}
\norme{e^{\psi_s(z')} \exp(- \beta(z')) h (z')}_{L^2(B(x_j',4 \varepsilon'))}^2 \\
\leq C \exp(-2 c s \varepsilon)  \norme{e^{\psi_s(z)} \exp(- \beta(z)) h (z)}_{L^2(C(x_j',6 \varepsilon',8 \varepsilon'))}^2,\label{eq:estimatelocal2}
\end{multline}
and from \eqref{eq:assumptionr'}, the definition of $\zeta$ in \eqref{eq:defzeta}, the definition of $\gamma$ in \eqref{eq:defgamma} and $|\gamma| \geq |Re(\gamma)| = |\chi h|$,
\begin{equation}
\norme{y(z) e^{s \psi_s(z)}}_{L^2(B_2)}^2 
\geq \sum_{j \in J_1} \norme{e^{\psi_s(z)} \exp(- \beta(z)) h (z)}_{L^2(C(x_j',6 \varepsilon',10 \varepsilon'))}^2, \label{eq:estimatelocal3}
\end{equation}
We then deduce from \eqref{eq:estimatelocal1}, \eqref{eq:estimatelocal2} and \eqref{eq:estimatelocal3} and by taking $s$ as in \eqref{eq:sfunctionvarepsilon},
\begin{align}
\norme{\eta e^{\psi_s} \exp(-  \beta ) |\nabla \chi | h}_{L^2(B_2)}^2& \leq C \varepsilon^{-2} \exp(-2 c s \varepsilon) \norme{y e^{\psi_s}}_{L^2(B_2)}^2 +  \mathcal{B}\notag\\
& \leq \frac{1}{4} \norme{y e^{\psi_s}}_{L^2(B_2)}^2 + \mathcal{B}.\label{eq:decoupagerighthandside2}
\end{align}

\begin{figure}
\centering
\begin{tikzpicture}[scale=0.27]
\draw[fill=magenta] (0,0) circle (4);
\draw (0,0) node{\scriptsize{$B(x_j', 4\varepsilon')$}};
\draw [fill=cyan,even odd rule] (0,0) circle[radius=10cm] circle[radius=6cm];
\draw (0,7.5) node{\scriptsize{$C(x_j', 6\varepsilon', 8 \varepsilon')$}};
\draw (-20,0) node[below]{$0$};
\draw (-20,0) node {$\times$};
\end{tikzpicture}
\caption{The neighbourhood of a disk $D_j'$ of the perforated domain.}
    \label{fig:Weight}
\end{figure}
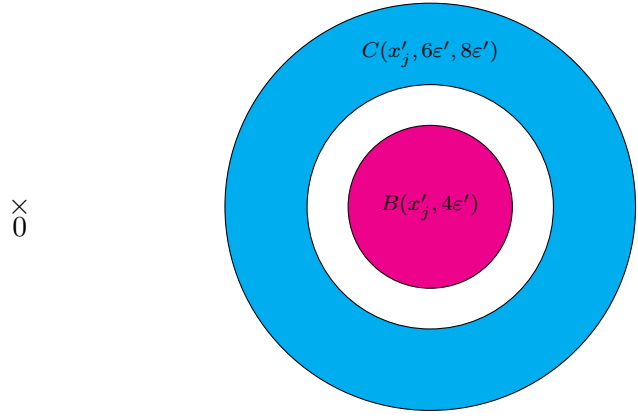

\textbf{Step 3: Absorption of the non-local term.} The second term in \eqref{eq:decoupagerighthandside} is called the non local term and is estimated in the next paragraph.

By recalling the form of $\tilde{E}_h$ in \eqref{eq:ehtilde}, by using that the disks $B(x_j', 4 \varepsilon')$ are $12 \varepsilon'$-separated from $0$, hence $E_h$ is supported in $C(0,\varepsilon',2)$, we first obtain that 
\begin{equation}
\norme{\eta e^{\psi_s} \exp(-  \beta ) \tilde{E}_h }_{L^2(B_2)}^2 
\leq C \varepsilon^{-2} \norme{\eta e^{\psi_s} \exp(-  \beta ) {E}_h }_{L^2(B_2)}^2.
\end{equation}
Then by using the definition of $E_h$ in \eqref{eq:defEh}, Hölder inequality, the bound on $\nabla \chi$ in \eqref{eq:estimationchi}, $L^p$ estimate on $\nabla h$ at scale $\varepsilon$ from \eqref{eq:estimationgradientcutoff}, $L^p$ estimate on $\nabla h$ from \eqref{eq:estimationgradientcutoffboundary}, we then obtain
\begin{align}
&\norme{\eta e^{\psi_s} \exp(-  \beta ) \tilde{E}_h }_{L^2(B_2)}^2\notag \\
&\leq C \varepsilon^{-4}\sum_{j \in J_1} \norme{e^{\psi_s} \exp(-  \beta )}_{L^4(B_{0,2}(x_j', 4 \varepsilon'))}^2
\cdot \left(\norme{\nabla h }_{L^4(B(x_j', 4 \varepsilon'))}^2 + \norme{\hat{W}}_{\infty}^2 \norme{h }_{L^4(B(x_j', 4 \varepsilon'))}^2\right)\notag\\
&\qquad+ C \varepsilon^{-4}  \norme{ e^{\psi_s} \exp(-  \beta )}_{L^4(C(0,2-16 c_0 , 2-4 c_0))}^2\notag \\
&\qquad\qquad \cdot \left(\norme{\nabla h }_{L^4(C(0,2-16 c_0 , 2-4 c_0))}^2 + \norme{\hat{W}}_{\infty}^2 \norme{h }_{L^4(C(0,2-16 c_0 , 2-4 c_0))}^2\right)\notag\\
&\leq C \varepsilon^{-6}\sum_{j \in J_1} \norme{e^{\psi_s} \exp(-  \beta )}_{L^4(B_{0,2}(x_j', 4 \varepsilon'))}^2\norme{ h }_{L^{\infty}(B(x_j', 8 \varepsilon'))}^2 + \mathcal{B},\label{eq:nonlocaldecoupling}
\end{align}
where 
$$ B_{0,2}(x_j', 4 \varepsilon') = \{z' = z + t \frac{z}{|z|}\ ;\ z \in B(x_j',4 \varepsilon'),\ 0 \leq t \leq 2\} \cap B(0,2).$$
Note that $B_{0,2}(x_j', 4 \varepsilon')$ is like a cone that we decide to represent several points in \Cref{fig:WeightCone}.

We only focus on the first right hand side term of \eqref{eq:nonlocaldecoupling}. We split 
\begin{equation}
B_{0,2}(x_j', 4 \varepsilon') = B_{0,\varepsilon'}(x_j', 4\varepsilon') \cup B_{\varepsilon',2}(x_j', 4 \varepsilon').
\end{equation}
So we have
\begin{multline}
 \norme{e^{\psi_s} \exp(-  \beta )}_{L^4(B_{0,2}(x_j', 4 \varepsilon'))}^2
\leq  \norme{e^{\psi_s} \exp(-  \beta )}_{L^4(B_{0,\varepsilon'}(x_j', 4 \varepsilon'))}^2 \\
+ \norme{e^{\psi_s} \exp(-  \beta )}_{L^4(B_{\varepsilon',2}(x_j', 4 \varepsilon'))}^2.
\end{multline}
By using $s \geq C \varepsilon^{-1}$ because of \eqref{eq:sfunctionvarepsilon} and $\varepsilon^{Q} \|\hat{W}\|_{\infty} \leq c$ because of \eqref{eq:epsPetitPoincareGeneralFinStep2}, then for $\varepsilon \leq t \leq 2$, we have
\begin{multline}
 \exp (C \|\hat{W}\|_{\infty} t^Q) \exp(-cst) \leq \exp (C \|\hat{W}\|_{\infty} \varepsilon^{Q-1} t) \exp(- C \varepsilon^{-1} t) \\
 \leq \exp(c \varepsilon^{-1} t)\exp(- C \varepsilon^{-1} t) \leq C.
\end{multline}
So from \Cref{lem:estimationnonlocCarleman}, \eqref{eq:estimationbetanonlocal} and \eqref{eq:estimationpoidsnonlocal}, we get from the last two estimates
\begin{equation}
\norme{e^{\psi_s} \exp(- \beta )}_{L^4(B_{0,2}(x_j', 4 \varepsilon'))}^2 
\leq C \norme{e^{\psi_s} \exp(-  \beta )}_{L^4(B(x_j', 5 \varepsilon'))}^2,
\end{equation}
and therefore we have
\begin{multline}
\label{eq:nonlocalajoutdetails}
\varepsilon^{-6} \norme{e^{\psi_s} \exp(-  \beta )}_{L^4(B_{0,2}(x_j', 4 \varepsilon'))}^2\norme{ h }_{L^{\infty}(B(x_j', 8 \varepsilon'))}^2 \\
\leq C \varepsilon^{-6}  \norme{e^{\psi_s} \exp(-  \beta )}_{L^4(B(x_j', 5 \varepsilon'))}^2 \norme{ h }_{L^{\infty}(B(x_j', 8 \varepsilon'))}^2.
\end{multline}
Note that the right hand side term in \eqref{eq:nonlocalajoutdetails} looks like a local term, therefore from an easy adaptation of \Cref{lem:estimationlocCarleman},
\begin{multline}
\varepsilon^{-6} \norme{e^{\psi_s} \exp(-  \beta )}_{L^4(B_{0,2}(x_j', 4 \varepsilon'))}^2 \norme{ h }_{L^{\infty}(B(x_j', 8 \varepsilon'))}^2
\leq C \varepsilon^{-6} \norme{e^{\psi_s} \exp(-  \beta ) h}_{L^4(B_{0,2}(x_j', 4 \varepsilon'))}^2\\
\leq C \varepsilon^{-5} \norme{e^{\psi_s} \exp(-  \beta ) h}_{L^{\infty}(B_{0,2}(x_j', 4 \varepsilon'))}^2
\leq C \varepsilon^{-7} \exp(-2 c s \varepsilon) \norme{e^{\psi_s} \exp(- \beta h}_{L^2(C(x_j',6 \varepsilon',8 \varepsilon'))}^2.
\end{multline}
Then, recalling \eqref{eq:estimatelocal3}, \eqref{eq:nonlocaldecoupling} and by taking $s$ as in \eqref{eq:sfunctionvarepsilon}, we have 
\begin{align}
\norme{\eta e^{\psi_s} \exp(-  \beta ) \tilde{E}_h }_{L^2(B_2)}^2 & \leq C \varepsilon^{-7} \exp(-2 c s \varepsilon) \norme{y e^{\psi_s}}_{L^2(B_2)}^2 +  \mathcal{B}\notag\\
& \leq \frac{1}{4} \norme{y e^{\psi_s}}_{L^2(B_2)}^2 + \mathcal{B}.\label{eq:decoupagerighthandside2nonlocal}
\end{align}

\begin{figure}
\centering
\begin{tikzpicture}[scale=0.47]
\draw[fill=magenta] (0,0) circle (2);
\draw (0,0) node{\scriptsize{$B(x_j', 4\varepsilon')$}};
\draw [fill=cyan,even odd rule] (0,0) circle[radius=5cm] circle[radius=3cm];
\draw (0,3.60) node{\scriptsize{$C(x_j', 6\varepsilon', 10 \varepsilon')$}};
\draw (-15,0) node[below]{$0$};
\draw (-15,0) node {$\times$};
\coordinate (0) at (-15,0);
\coordinate (N) at (15,3);
\draw  (15,3) node[below]{$z_1$};
\draw  (15,3) node {$\times$};
\draw (0)--(N);
\coordinate (P) at (12,2);
\draw  (12,2) node[below]{$z_2$};
\draw  (12,2) node {$\times$};
\draw (0)--(P);
\coordinate (Q) at (14,3.8);
\draw  (14,3.8) node[below]{$z_3$};
\draw  (14,3.8) node {$\times$};
\draw (0)--(Q);
\coordinate (R) at (10,-2);
\draw  (10,-2) node[below]{$z_4$};
\draw  (10,-2) node {$\times$};
\draw (0)--(R);
\coordinate (S) at (4,-0.8);
\draw  (4,-0.8) node[below]{$z_5$};
\draw  (4,-0.8) node {$\times$};
\draw (0)--(S);
\end{tikzpicture}
\caption{The representation of several points $z_i \in B_{0,2}(x_j', 4 \varepsilon')$.}
    \label{fig:WeightCone}
\end{figure}
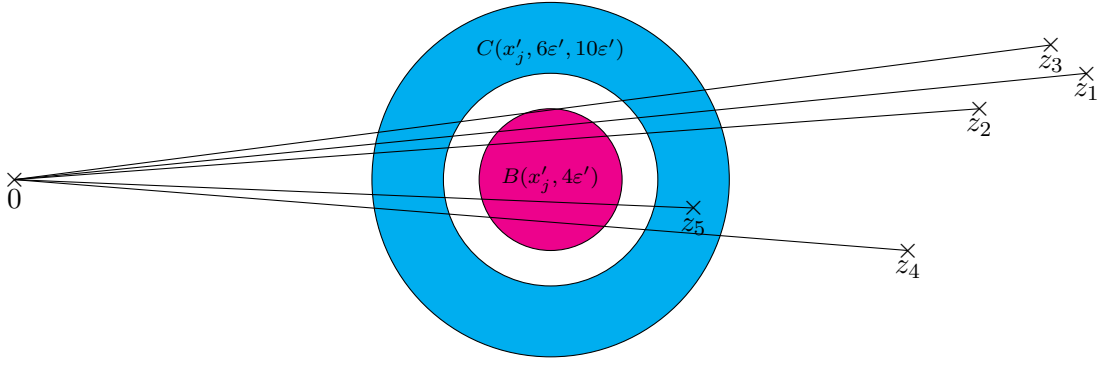

\textbf{Step 4: Treatment of the boundary term.} The last term in the right hand side of \eqref{eq:applicationcarleman} is called a boundary term and can be estimated by using the definition of $\zeta$ in \eqref{eq:defzeta}, the definition of $\gamma$ in \eqref{eq:defgamma}, the definition of $h$ in \eqref{eq:defh}, the definition of $\tilde{h}$ in \eqref{eq:defvrhotheta}, the bound on $\beta$ i.e. \eqref{eq:boundbeta} and standard elliptic estimates applied to \eqref{eq:equationh} to get
\begin{equation}
\label{eq:boundaryzeta}
\int_{2-8 c_0 \leq |z| \leq 2} |\zeta|^2 e^{2 \psi_s(z)} dz \leq \mathcal{B}.
\end{equation}

\textbf{Step 5: Conclusion.} 
By gathering \eqref{eq:applicationcarleman}, \eqref{eq:decoupagerighthandside}, \eqref{eq:decoupagerighthandside2}, \eqref{eq:decoupagerighthandside2nonlocal} and \eqref{eq:boundaryzeta}, we obtain the following Carleman estimate
\begin{equation}
\norme{\eta \zeta e^{\psi_s}}_{L^2(B_2)}^2 + \norme{\eta (\partial_{\overline{z}} \zeta) e^{\psi_s}}_{L^2(B_2)}^2 \leq C \Bigg( r'^{-2} \norme{\zeta e^{\psi_s}}_{L^2(C(0,r'/2,r'))}^2
+ \mathcal{B} \Bigg).
\end{equation}
Moreover, from an easy adaptation of Step 2 and Step 3, it is straightforward to obtain under the condition on $s$, i.e. \eqref{eq:sfunctionvarepsilon},
\begin{equation}
 \norme{\eta (\partial_{\overline{z}} \zeta) e^{\psi_s}}_{L^p(B_2)}^2 \leq \frac{1}{2} \norme{\eta \zeta e^{\psi_s}}_{L^2(B_2)}^2 + \mathcal{B} .
\end{equation}
Hence, we obtain the expected Carleman estimate \eqref{eq:carlemanestimatenoboundary} from the last two estimates then the conclusion of the proof.
\end{proof}

\subsection{From the Carleman estimate to the quantitative unique continuation}

The goal of this last part is to finish the proof of \eqref{eq:observabilityu}.

For the next and without loss of generality, we assume that 
\begin{equation}
\label{eq:estimateLxmax}
|L(x_{\max})| \geq r'.
\end{equation}
Indeed, if this is not the case this means that $x_{\max}$ belongs to $B(0,r/2)$, because $B(0,2r') \subset L(B(0,r/2))$ by the last point of \Cref{lem:imagequasiconformal}, and the observability estimate \eqref{eq:observabilityu} is trivial.

\begin{proof}[Proof of the estimate \eqref{eq:observabilityu}]
From the Carleman estimate \eqref{eq:carlemanestimatenoboundary}, we obtain
\begin{multline}
e^{2\psi_s(2-24 c_0)} \int_{B(0,2-24 c_0)}  |\eta|^2 [|\zeta |^2 + |\partial_{\overline{z}}\zeta |^2 ] dz\\
\leq C \Bigg( r'^{-2} \norme{\zeta e^{\psi_s}}_{L^2(C(0,r'/2,r'))}^2
+   \exp\left(C \varepsilon^{-1} \right) \exp(2\psi_s(2-16 c_0)) \|u\|_{L^{\infty}(B_2)}^2\Bigg).
\end{multline}
By using regularity estimate on non-homogeneous $\partial_{\overline{z}}$ equation, see for instance \cite[Theorem 5.4.3]{AIM09}, we can therefore deduce an estimate of the form
\begin{multline}
e^{2\psi_s(2-24 c_0)} \norme{\eta \zeta}_{W^{1,2}(B(0,2-28 c_0))}^2\\
 \leq C \Bigg( r'^{-2} \norme{\zeta e^{\psi_s}}_{L^2(C(0,r'/2,r'))}^2
+   \exp\left(C \varepsilon^{-1} \right) \exp(2\psi_s(2-16 c_0)) \|u\|_{L^{\infty}(B_2)}^2\Bigg).
\end{multline}
So by Sobolev embedding, for $2 \leq p<  \infty$,
\begin{multline}
\label{eq:toL2toLpCarleman}
e^{2\psi_s(2-24 c_0)} \norme{\eta \zeta}_{L^p(B(0,2-28 c_0))}^2\\
\leq C \Bigg( r'^{-2} \norme{\zeta e^{\psi_s}}_{L^2(C(0,r'/2,r'))}^2
+   \exp\left(C \varepsilon^{-1} \right) \exp(2\psi_s(2-16 c_0)) \|u\|_{L^{\infty}(B_2)}^2\Bigg).
\end{multline}
So by using the third left hand side term in \eqref{eq:carlemanestimatenoboundary} and \eqref{eq:toL2toLpCarleman}, together with regularity estimate on non-homogeneous $\partial_{\overline{z}}$ equation, see for instance \cite[Theorem 5.4.3]{AIM09}, we have for $2 \leq p < \infty$,
\begin{multline}
e^{2\psi_s(2-24 c_0)}  \norme{\eta \zeta}_{W^{1,p}(B(0,2-30 c_0))}^2\\
 \leq C \Bigg( r'^{-2} \norme{\zeta e^{\psi_s}}_{L^p(C(0,r'/2,r'))}^2
+   \exp\left(C \varepsilon^{-1} \right) \exp(2\psi_s(2-16 c_0)) \|u\|_{L^{\infty}(B_2)}^2\Bigg).
\end{multline}
So by using Sobolev embedding, taking $p \in (2, \infty)$, we finally get
\begin{multline}
\label{eq:intermediatefincarleman}
e^{2\psi_s(2-24 c_0)} \norme{\eta \zeta}_{L^{\infty}(B(0,2-30 c_0))}^2\\
 \leq C \Bigg( r'^{-2} \norme{\zeta e^{\psi_s}}_{L^{\infty}(C(0,r'/2,r'))}^2
+  \exp\left(C \varepsilon^{-1} \right) \exp(2\psi_s(2-16 c_0)) \|u\|_{L^{\infty}(B_2)}^2\Bigg).
\end{multline}
Recalling the definition of $c_0$ in \eqref{eq:defc0}, we have that $B(0,2-2^{-5}) \subset B(0,2-30 c_0)$. Moreover from our assumptions $L(x_{\max}) \in B(0,2-2^{-5})$ because $x_{\max} \in B(0,1)$ and we have the Lipschitz estimate on $L$, i.e. \eqref{eq:LipschitzL}. By conjugating \eqref{eq:intermediatefincarleman} with \eqref{eq:estimateLxmax}, by using the definition of $\zeta$ in \eqref{eq:defzeta}, the definition of $\gamma$ in \eqref{eq:defgamma}, the definition of $h$ in \eqref{eq:defh}, the definition of $\tilde{h}$ in \eqref{eq:defvrhotheta}, the bound on $\beta$ i.e. \eqref{eq:boundbeta}, we then deduce
\begin{multline}
\exp(-C \|\hat{W}\|_{\infty}) e^{2\psi_s(2-24 c_0)}|h (L(x_{\max}))|^2\\
 \leq C \Bigg( r'^{-2} \norme{\zeta e^{\psi_s}}_{L^{\infty}(C(0,r'/2,r'))}^2
+  \exp\left(C \varepsilon^{-1} \right) \exp(2\psi_s(2-16 c_0)) \|u\|_{L^{\infty}(B_2)}^2\Bigg),
\end{multline}
that translates into 
\begin{multline}
\label{eq:obsboundaryabsorb}
\exp(-C \|\hat{W}\|_{\infty})e^{2\psi_s(2-24 c_0)}| |u(x_{\max})|^2 \\
\leq C \Bigg( r'^{-2} \norme{\zeta e^{\psi_s}}_{L^{\infty}(C(0,r'/2,r'))}^2
+  \exp\left(C \varepsilon^{-1} \right) \exp(2\psi_s(2-16 c_0)) \|u\|_{L^{\infty}(B_2)}^2\Bigg).
\end{multline}
Then from the assumption \eqref{eq:ucontrolledneartheboundary},
we then have
\begin{multline}
\label{eq:obsboundaryabsorbBis}
\exp(-C \|\hat{W}\|_{\infty}) e^{2\psi_s(2-24 c_0)} |u(x_{\max})|^2 \\
\leq C \Bigg( r'^{-2} \norme{\zeta e^{\psi_s}}_{L^{\infty}(C(0,r'/2,r'))}^2
+  \exp\left(C \varepsilon^{-1} \right) \exp(2\psi_s(2-16 c_0)) \exp (2K) |u(x_{\max})|^2\Bigg).
\end{multline}
As a consequence, from \eqref{eq:epsPetitPoincareGeneralFinStep2}, by taking $s$ such that 
\begin{equation}
\label{eq:sfunctionvarepsilonK}
s  \geq C  \varepsilon^{-1} \log(C \varepsilon^{-1}) + C K + C \|\hat{W}\|_{\infty},
\end{equation}
we obtain from \eqref{eq:obsboundaryabsorbBis}
\begin{equation}
\exp(-C \|\hat{W}\|_{\infty}) e^{2\psi_s(2-24 c_0)}|u(x_{\max})|^2 \leq C  r'^{-2} \norme{\zeta e^{\psi_s}}_{L^{\infty}(C(0,r'/2,r'))}^2.
\end{equation}
Finally, from the definition of $\zeta$ in \eqref{eq:defzeta}, the bound on $\beta$ i.e. \eqref{eq:boundbeta}, the definition of $\gamma$ in \eqref{eq:defgamma}, the definition of $\tilde{h}$ in \eqref{eq:defvrhotheta}, standard elliptic estimates applied to $h$, the form of $r'$ in function of $r$ given in \eqref{eq:defr'}, one can deduce
\begin{equation}
|u(x_{\max})|^2 \leq C  \exp(-C s \log(r)) \norme{h}_{L^{\infty}(B(0,2r'))}^2 \leq C e^{ - C s  \log(r)}  \norme{u}_{L^{\infty}(B(0,r))}^2,
\end{equation}
then the expected estimate \eqref{eq:observabilityu} from \eqref{eq:sfunctionvarepsilonK}, \eqref{eq:defW} and \eqref{eq:epsPetitPoincareGeneralFinStep2}.
\end{proof}

\section{Final comments}
\label{sec:finalcomments}

\subsection{Perspectives and open questions}

The following remarks concerning \Cref{thm:landislocal}, that implies all the other main results according to \eqref{eq:implicationtheorem}, are in order.
\begin{enumerate}
\item A natural important question is the sharpness of \eqref{eq:observabilityu} in function of
$\|W_1\|_{\infty}, \|W_2\|_{\infty}, \|V\|_{\infty}$ in the real-valued case. Note that the complex-valued case is by now quite well-understood, see \cite{Dav14}. This is definitely not sharp in function of $\|V\|_{\infty}$ because \cite{LMNN20} obtains, when $W_1 = W_2 = 0$, 
$$\norme{u}_{L^{\infty}(B_r)} \geq r^{C \left( \|V\|_{\infty}^{1/2} \log_{+}^{1/2}(\|V\|_{\infty})\right)+ C K + C} \norme{u}_{L^{\infty}(B_2)}\quad  \forall r \in (0,1/2).$$
This is also definitely not sharp in function of $\|W_1\|_{\infty}, \|W_2\|_{\infty}$ because \cite{KSW15} obtains when $W_2=0$,  $V \geq 0$, respectively when $W_1 = 0$, $V \geq 0$,
$$\norme{u}_{L^{\infty}(B_r)} \geq r^{C \left( \|W_1\|_{\infty} + \|V\|_{\infty}^{1/2} \right)+ C K + C} \norme{u}_{L^{\infty}(B_2)}\quad  \forall r \in (0,1/2),$$
respectively
$$\norme{u}_{L^{\infty}(B_r)} \geq r^{C \left( \|W_2\|_{\infty} + \|V\|_{\infty}^{1/2}\right)+ C K + C} \norme{u}_{L^{\infty}(B_2)}\quad  \forall r \in (0,1/2).$$
But all these examples are not considering the general equation \eqref{eq:equationuLandisquantitativelocal}. In order to improve \eqref{eq:observabilityu} at least in function of $\|W_1\|_{\infty}, \|W_2\|_{\infty}$ a possible strategy is to investigate where we are losing a factor $\|W_1\|_{\infty}^{\delta}$, $\|W_2\|_{\infty}^{\delta}$. It already appears in Step 1, because of the weak quantitative maximum principles with $W^{-1, \infty}$ source term $-\Delta \Phi = \nabla \cdot g$ in a domain with a small Poincaré constant of size $C \varepsilon$, see \Cref{lem:varphiTildeDivergenceSource}. It would be interesting to see if one can improve the $L^{\infty}$-bound on $\Phi$ into $\|\Phi\|_{\infty} \leq C \varepsilon \|g\|_{\infty}$. If such a bound was true, this would lead to the replacement of $\varepsilon$ in Step 1, i.e. one could modify \eqref{eq:firstvarepsilon} into
\begin{equation*}
\label{eq:firstvarepsilonconj}
\varepsilon \leq c +  c \|W_1\|_{\infty}^{-1} + c \|W_2\|_{\infty}^{-1} +  c \|V\|_{\infty}^{-1/2}.
\end{equation*}
In Step 2, by the quasiconformal transformation, \eqref{eq:secondvarepsilon} would become
\begin{equation*}
\varepsilon \leq c \|W_2\|_{\infty}^{-1}\log^{-1} (\|W_2\|_{\infty}) + c \|V\|_{\infty}^{-1/2} \log^{-1/2}(\|V\|_{\infty}).
\end{equation*}
This logarithm loss is probably optimal according to \cite{LMNN-Personal}. However, there is still a problem because of the presence of $\partial_{\overline{z}} \overline{L^{-1}}$ in the definition of $\tilde{W}$ in \eqref{eq:deftildeW1}, \eqref{eq:deftildeW2}. We are only able to prove that $\partial_{\overline{z}} \overline{L^{-1}} \in L_{\mathrm{loc}}^p(B_2)$ for every $1 \leq p < \infty$ by using Cacciopoli's estimate. In particular, we do not know if $\partial_{\overline{z}} \overline{L^{-1}} \in L_{\mathrm{loc}}^{\infty}(B_2)$ and if one can obtain $$\|\tilde{W}\|_{L^{\infty}(B_{2-c})} \leq C \|W_1\|_{\infty} + C \|W_2\|_{\infty}.$$ 
If such a bound was true, one can then improve the Hölder's estimate \eqref{eq:HolderTSpecific} on $\beta$ into
\begin{equation}
|\beta (z_1) - \beta(z_2)|  \leq \left(C \|W_1\|_{\infty} + C |W_2\|_{\infty}\right) |z_1-z_2| \log\left( \frac{C}{|z_1-z_2|}\right),
\end{equation}
from \cite[Chapter 1, Paragraph 6]{Vek62}. By taking then
\begin{equation}
\label{eq:thirdvarepsilon}
\varepsilon \leq c \|W_1\|_{\infty}^{-1}\log^{-1} (\|W_1\|_{\infty}) + c \|W_2\|_{\infty}^{-1}\log^{-1} (\|W_2\|_{\infty}) + c \|V\|_{\infty}^{-1/2} \log^{-1/2}(\|V\|_{\infty}).
\end{equation}
This would be sufficient for the absorption of the local and non local terms in the last step, by taking $s \geq C \varepsilon^{-1} \log( C \varepsilon^{-1})$. Finally, to withdraw the logarithm loss in the Carleman step, a possible strategy would be to use the strategy in \cite{DF90} to add in the left hand side of the Carleman estimate a term of the form
\begin{equation}
c \varepsilon^{-2} \sum_{j \in J} \norme{\eta \zeta e^{\psi_s(z)}}_{L^2(C(x_j', 6 \varepsilon', 8 \varepsilon'))}^2.
\end{equation}
\item A natural possible extension of our results is to consider qualitative and quantitative Landis conjecture for exterior domains, that is for instance does \Cref{Thm:Landis}  holds replacing the equation \eqref{eq:equationuLandis} satisfied by $u$ in $\R^2$ by the same equation but only in the exterior domain $\R^2 \setminus \overline{B_1}$? See for instance \cite[Section 5]{KSW15}.
\item Another possible extension would be to consider the more general elliptic equation
\begin{equation}
\label{eq:equationuLandisquantitativelocalverygeneral}
- \sum_{i,j=1}^2 \partial_i (a_{ij} \partial_j u) - \nabla \cdot ( W_1 u )  +W_2 \cdot \nabla u  + V u = 0\ \text{in}\ B_2,
\end{equation}
where $a_{ij} \in L^{\infty}(B_2;\R^{2 \times 2})$ satisfying some ellipticity condition, $W_1, W_2 \in L^p(B_2;\R^2)$, $V \in L^{q}(B_2:\R)$ with $p>d$, $q>d/2$. The general situation is open even if interesting partial results are given in \cite{DKW17}, \cite{DW20} or very recently for growing potentials in \cite{Dav23}.
\item In the spirit of the methodology developed in \cite{ELB22}, it may also be natural to consider equations with source term, that is
\begin{equation}
\label{eq:equationuLandisquantitativelocalFinalCommentsSource}
-\Delta u - \nabla \cdot ( W_1 u )  +W_2 \cdot \nabla u  + V u = f\ \text{in}\ B_2.
\end{equation}
It seems natural to conjecture that under the assumption \eqref{eq:ucontrolledneartheboundary}, the following estimate holds for every $r \in (0,1/2)$,
\begin{equation}
\label{eq:observabilityusource}
\norme{u}_{L^{\infty}(B_r)} + \norme{f}_{L^{\infty}(B_2)} \geq r^{C \left(\|W_1\|_{\infty}^{1 + \delta} + \|W_2\|_{\infty}^{1 + \delta} + \|V\|_{\infty}^{1/2} \log_{+}^{3/2}(\|V\|_{\infty})\right)+ C K  + C} \norme{u}_{L^{\infty}(B_2)}.
\end{equation}
It is worth mentioning that such an estimate would lead to applications in control theory of linear and semi-linear elliptic equations.
\item Last but not least, always in the spirit of \cite{ELB22}, the treatment of elliptic equations \eqref{eq:equationuLandisquantitativelocalFinalCommentsSource} completed with Dirichlet boundary conditions for instance, is an interesting open question. Here, one of the difficulty is due to the fact that the boundary conditions are not preserved in the Step 2 of the proof because we are considering the variable $\gamma$ defined in \eqref{eq:defgamma}.
\end{enumerate}

\subsection{A Carleman estimate in a bounded open set with small Poincaré constant}

The goal of this part is to present a specific two-dimensional Carleman estimate in a bounded open set with small Poincaré constant. This type of estimate seems to be new but unfortunately we have not found applications of it. We hope that it can be useful for the reader in another context.

\begin{lemma}
\label{lem:carlemansmallpoincare}
For every $\varepsilon>0$, $C' \geq 1$, there exists $C>0$, independent of $\varepsilon$, such that for every bounded open set $\Omega \subset \R^2$ with $C_P(\Omega)^2 \leq (C')^2 \varepsilon^2$, for every $s \geq 1$, for every $\varphi \in C^{\infty}(\Omega;\R)$ such that $-\Delta \varphi \geq s \geq 1$, for every $u \in H_0^2(\Omega;\R)$, we have
\begin{equation}
\varepsilon^{-4} \int_{\Omega} e^{-\varphi} |u|^2 + s^2 \int_{\Omega}   e^{-\varphi} |u|^2 + \varepsilon^{-2} \int_{\Omega} e^{-\varphi} |\nabla u|^2 + s \int_{\Omega}  e^{-\varphi} |\nabla u|^2
 \leq C \int_{\Omega}  e^{-\varphi} |\Delta u|^2.
\end{equation}
\end{lemma}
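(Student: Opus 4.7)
The plan is to combine two building-block inequalities in $H_0^1(\Omega)$---a weighted Carleman bound (from $-\Delta\varphi\geq s$) and a weighted Poincaré bound (from the small Poincaré constant $C_P(\Omega)\leq C'\varepsilon$)---iterate them componentwise on $\partial_i u$ in order to reach $H_0^2(\Omega)$, and finally pass from control by $\int_\Omega e^{-\varphi}|D^2 u|^2$ to control by $\int_\Omega e^{-\varphi}|\Delta u|^2$ via a weighted Bochner identity.

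For the two $H_0^1$ building blocks, I claim that for every $u\in H_0^1(\Omega)$
\begin{equation*}
s\int_\Omega e^{-\varphi}u^2 \leq \int_\Omega e^{-\varphi}|\nabla u|^2 \qquad\text{and}\qquad \int_\Omega e^{-\varphi}u^2 \leq C_P(\Omega)^2 \int_\Omega e^{-\varphi}|\nabla u|^2.
\end{equation*}
The Carleman bound follows by multiplying $-\Delta\varphi\geq s$ by $e^{-\varphi}u^2$, integrating by parts to rewrite the left-hand side as $-\int e^{-\varphi}|\nabla\varphi|^2 u^2 + 2\int e^{-\varphi}u\nabla u\cdot\nabla\varphi$, and applying the Young inequality $2u\nabla u\cdot\nabla\varphi\leq|\nabla\varphi|^2 u^2+|\nabla u|^2$ to cancel the $|\nabla\varphi|^2 u^2$ contribution. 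The Poincaré bound follows by applying the standard Poincaré inequality to $v=e^{-\varphi/2}u\in H_0^1(\Omega)$: a direct computation gives
\begin{equation*}
\int|\nabla v|^2 = \int e^{-\varphi}|\nabla u|^2+\tfrac12 \int e^{-\varphi}\Delta\varphi\, u^2-\tfrac14 \int e^{-\varphi}|\nabla\varphi|^2 u^2 \leq \int e^{-\varphi}|\nabla u|^2,
\end{equation*}
using $\Delta\varphi\leq -s\leq 0$ and $|\nabla\varphi|^2\geq 0$ to discard the last two non-positive terms.

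Since $u\in H_0^2(\Omega)$ implies $\partial_i u\in H_0^1(\Omega)$, applying both building blocks to each $\partial_i u$ and summing over $i=1,2$ yields $s\int_\Omega e^{-\varphi}|\nabla u|^2\leq \int_\Omega e^{-\varphi}|D^2 u|^2$ and $\int_\Omega e^{-\varphi}|\nabla u|^2\leq C_P(\Omega)^2 \int_\Omega e^{-\varphi}|D^2 u|^2$. Combined with the $H^1$-level estimates applied to $u$ itself, and using $C_P(\Omega)^2\leq (C')^2\varepsilon^2$, this gives all four left-hand-side terms of the target inequality bounded by $C\int_\Omega e^{-\varphi}|D^2 u|^2$.

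The hard and final step is to establish $\int_\Omega e^{-\varphi}|D^2 u|^2\leq C\int_\Omega e^{-\varphi}|\Delta u|^2$ for $u\in H_0^2(\Omega)$. The Bochner identity $\tfrac12\Delta(|\nabla u|^2)=|D^2 u|^2+\nabla u\cdot\nabla\Delta u$ integrated against $e^{-\varphi}$, with all boundary terms vanishing because $\nabla u=0$ on $\partial\Omega$ for $u\in H_0^2$, gives
\begin{equation*}
\int_\Omega e^{-\varphi}|D^2 u|^2 = \int_\Omega e^{-\varphi}|\Delta u|^2-\int_\Omega e^{-\varphi}(\nabla\varphi\cdot\nabla u)\Delta u+\tfrac12 \int_\Omega e^{-\varphi}(|\nabla\varphi|^2-\Delta\varphi)|\nabla u|^2.
\end{equation*}
The main obstacle is to absorb the last two terms, whose integrands contain the possibly large quantities $|\nabla\varphi|^2$ and $-\Delta\varphi$. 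For this I would re-apply the Carleman building block componentwise to $\partial_i u$ to bound $\int e^{-\varphi}(-\Delta\varphi)|\nabla u|^2$ and $\int e^{-\varphi}|\nabla\varphi|^2|\nabla u|^2$ by a constant multiple of $\int e^{-\varphi}|D^2 u|^2$, handle the mixed term $-\int e^{-\varphi}(\nabla\varphi\cdot\nabla u)\Delta u$ by Young's inequality at an appropriately small parameter, and absorb the resulting $\int e^{-\varphi}|D^2 u|^2$ contributions into the left-hand side---this absorption being the delicate closure step of the whole argument.
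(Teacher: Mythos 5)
Your two $H_0^1(\Omega)$ building blocks are correct, and the iteration giving bounds of all four left-hand-side terms by $\int_\Omega e^{-\varphi}|D^2 u|^2$ is sound. The genuine gap is the final passage $\int_\Omega e^{-\varphi}|D^2 u|^2 \leq C\int_\Omega e^{-\varphi}|\Delta u|^2$, which your absorption plan does not close. The sharpest form of your Carleman block is the pointwise-in-$w$ identity
\begin{equation*}
\int_\Omega e^{-\varphi}|\nabla w|^2 = \int_\Omega \bigl|\nabla\bigl(e^{-\varphi/2}w\bigr)\bigr|^2 + \frac{1}{2}\int_\Omega e^{-\varphi}(-\Delta\varphi)\,w^2 + \frac{1}{4}\int_\Omega e^{-\varphi}|\nabla\varphi|^2 w^2,
\end{equation*}
which, applied to $w=\partial_i u$ and summed, yields $\frac{1}{2}\int e^{-\varphi}(-\Delta\varphi)|\nabla u|^2 + \frac{1}{4}\int e^{-\varphi}|\nabla\varphi|^2|\nabla u|^2 \leq \int e^{-\varphi}|D^2 u|^2$ and hence at best
\begin{equation*}
\frac{1}{2}\int_\Omega e^{-\varphi}\bigl(|\nabla\varphi|^2-\Delta\varphi\bigr)|\nabla u|^2 \leq 2\int_\Omega e^{-\varphi}|D^2 u|^2.
\end{equation*}
The constant $2$ cannot be pushed below $1$ by this route (without a bound relating $|\nabla\varphi|^2$ to $-\Delta\varphi$, which is not assumed), so the term you need to absorb can be as large as $2\int e^{-\varphi}|D^2 u|^2$ and the Bochner rearrangement gives no usable lower bound. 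The "delicate closure step" you flag is not merely delicate; it fails with the estimates you have assembled, and it is not clear the weighted Hessian bound even holds with a constant independent of $\varphi$.

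The paper avoids $|D^2 u|^2$ entirely by staying at the level of first-order complex derivatives: in dimension two, for real-valued $u$ one has $|\bar\partial u| = |\partial u| = \frac{1}{2}|\nabla u|$ and $\bar\partial\partial u = \frac{1}{4}\Delta u$, so applying the $\bar\partial$-Carleman estimate of \cite{DF90} (\Cref{prop:carlemanL2}) once to $u$ and once to $\partial u$ directly produces the bound in terms of $|\Delta u|^2$. The decisive advantage over your building block is that the $\bar\partial$-Carleman estimate controls $\int |v|^2 e^{\phi}$ by $\int |\bar\partial v|^2 e^{\phi}$ alone, not by the full $\int |\nabla v|^2 e^{\phi}$; for $v=\partial u$ this is exactly what converts $|\nabla u|^2$ to $|\Delta u|^2$ without ever seeing the off-Laplacian part of the Hessian. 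The small Poincar\'e constant enters not through a weighted Poincar\'e inequality as in your argument, but through an auxiliary bounded function $\Phi$ constructed via \Cref{lem:varphiTilde} whose Laplacian is $\varepsilon^{-2}$; adding $\Phi$ to the weight raises its Laplacian from $s$ to $s+\varepsilon^{-2}$, producing the $\varepsilon^{-2}$ factor while the $L^\infty$-bound $\|\Phi\|_\infty\leq C$ lets one remove $\Phi$ from the weight at the end.
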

The idea behind this was to exploit this Carleman estimate in the domain $\Omega_{\varepsilon}$, defined in \eqref{Omega-Eps} that has a small Poincaré constant. But even the solution $u$ to $-\Delta u + Vu=0$ belongs to $H_0^1(\Omega_{\varepsilon})$, we do not have $u \in H_0^2(\Omega_{\varepsilon})$ that prevents the application of such an inequality.

\begin{proof}
We can assume that $u \in C_c^{\infty}(\Omega;\R)$ by density.

By \Cref{lem:varphiTilde}, one can construct $\Phi \in H_0^1(\Omega)$ such that $-\Delta \Phi =  \varepsilon^{-2}     \text{ in } \Omega$ and $\Phi$ satisfies $\norme{\Phi}_{\infty} \leq C$. Then we directly apply \cite[Proposition 2.2]{DF90} with the weight $e^{-\varphi + \Phi}$ to get
\begin{equation}
\varepsilon^{-2} \int_{\Omega} e^{- \varphi+\Phi} |u|^2 + s  \int_{\Omega} e^{- \varphi+\Phi} |u|^2 \leq C \int_{\Omega}  e^{- \varphi+\Phi} |\overline{\partial} u|^2.
\end{equation}
Now, because $u$ is real-valued, $|\overline{\partial} u| = |\partial u| = |\nabla u|$, so one can apply  \cite[Proposition 2.2]{DF90} to $\partial{u}$,
\begin{equation}
\varepsilon^{-2} \int_{\Omega} e^{- \varphi+\Phi} |\partial u|^2 + s  \int_{\Omega} e^{- \varphi+\Phi} |\partial u|^2 \leq C \int_{\Omega}  e^{- \varphi+\Phi} |\overline{\partial} \partial u|^2 = C \int_{\Omega}  e^{- \varphi+\Phi} |\Delta u|^2
\end{equation}
The combination of the last two estimates and the fact that $\Phi$ is uniformly bounded give the result.
\end{proof}

\appendix

\section{Estimates for elliptic equations with lower order terms}

We have the following local $W^{1,p}$ estimate for second order elliptic equations with divergence drift term.
\begin{lemma}
\label{lem:gradientdivtermscale1}
For every $p \in (1, \infty)$, there exists $C>0$ such that for every $W \in L^{p}(B_2)$, for every $u \in H_{\mathrm{loc}}^1(B_2) \cap L^{\infty}(B_2)$ satisfying
\begin{equation}
- \Delta u - \nabla \cdot  (W u) = 0\ \text{in}\ B_2,
\end{equation}
then,
\begin{equation}
\norme{\nabla u}_{L^p(B_1)} \leq C \norme{W}_{L^p(B_2)} \norme{u}_{L^{\infty}(B_2)} + C \norme{u}_{L^{\infty}(B_2)}.
\end{equation}
\end{lemma}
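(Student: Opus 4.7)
The plan is to move the drift term to the right-hand side, rewriting the equation as a Poisson problem with divergence source,
\begin{equation*}
-\Delta u = \nabla \cdot g, \qquad g := Wu \text{ in } B_2.
\end{equation*}
Since $u \in L^\infty(B_2)$ and $W \in L^p(B_2)$, Hölder's inequality yields $g \in L^p(B_2)$ with the trivial pointwise bound
\begin{equation*}
\|g\|_{L^p(B_2)} \leq \|W\|_{L^p(B_2)}\,\|u\|_{L^\infty(B_2)}.
\end{equation*}
The conclusion will then follow from a classical interior $W^{1,p}$ estimate for $-\Delta$ with divergence source.

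The core ingredient is the following standard Calderón–Zygmund-type interior estimate: if $v \in W^{1,p}_{\mathrm{loc}}(B_2)$ satisfies $-\Delta v = \nabla \cdot g$ in $B_2$ with $g \in L^p(B_2)$, then
\begin{equation*}
\|\nabla v\|_{L^p(B_1)} \leq C\bigl(\|g\|_{L^p(B_2)} + \|v\|_{L^p(B_2)}\bigr).
\end{equation*}
This is obtained via the boundedness of the Riesz transform on $L^p(\mathbb{R}^2)$ for $1<p<\infty$, combined with a cut-off: set $\chi \in C_c^\infty(B_{3/2})$ with $\chi = 1$ on $B_{5/4}$, and observe that $\chi v$ solves, on all of $\mathbb{R}^2$, a Poisson equation whose right-hand side can be expressed as $\nabla \cdot (\chi g) + (\text{lower-order terms involving } v, \nabla v, \nabla \chi, \Delta \chi)$; inverting $-\Delta$ via the Newtonian potential and applying the Riesz transform estimate gives the bound on $\|\nabla v\|_{L^p(B_1)}$, with the $\nabla v$ term on the right supported in an annulus that can be absorbed by iterating the estimate on a nested sequence of radii. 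Applying this to $u$ with $g = Wu$ and using $\|u\|_{L^p(B_2)} \leq C\|u\|_{L^\infty(B_2)}$ gives the stated inequality.

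The main technical subtlety is the hypothesis $u \in W^{1,p}_{\mathrm{loc}}$ required by the classical estimate, whereas we are only given $u \in H^1_{\mathrm{loc}} \cap L^\infty$. For $p \leq 2$ this is automatic since $L^2_{\mathrm{loc}} \hookrightarrow L^p_{\mathrm{loc}}$ on bounded domains. For $p > 2$ a short bootstrap is needed: since $u$ is already in $L^\infty$, the right-hand side $Wu$ lies in $L^p$ immediately, so one introduces cut-offs $\chi_k$ on nested balls $B_{r_k}$ with $r_k \nearrow 2$, applies the $\mathbb{R}^2$ Calderón–Zygmund estimate to $v_k = \chi_k u$ (whose equation has compactly supported right-hand side with a $\nabla \chi_k \cdot \nabla u$ term), and absorbs the gradient term by iterating finitely many times — the iteration terminates because the $L^\infty$ bound already propagates without loss through the successive cut-offs. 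This bootstrap is the only non-completely-routine step; once $u \in W^{1,p}_{\mathrm{loc}}(B_2)$ is established, the final estimate is a direct application of the interior regularity result above.
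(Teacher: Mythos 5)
Your argument is essentially the same as the paper's: move the drift to the right-hand side as $\nabla\cdot(Wu)$, bound $\|Wu\|_{L^p(B_2)}\leq\|W\|_{L^p(B_2)}\|u\|_{L^\infty(B_2)}$, and conclude by the interior $W^{1,p}$ estimate for $-\Delta v=\nabla\cdot g$. The paper obtains that estimate simply by citing \cite[Theorem 2]{Mey63}, which is already a regularity statement valid for $H^1_{\mathrm{loc}}$ weak solutions; the bootstrap you describe to reach $W^{1,p}_{\mathrm{loc}}$ for $p>2$ (and indeed the Riesz-transform/cut-off proof of the interior estimate) is therefore subsumed in that reference rather than constituting an extra step.
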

\begin{proof}
This is a direct application of \cite[Theorem 2]{Mey63}, stating that if $u$ is a weak solution to 
$$ - \Delta u = \nabla \cdot g\ \text{in}\ B_2,$$
then we have the local estimate
\begin{equation}
\norme{\nabla u}_{L^p(B_1)} \leq C   \norme{g}_{L^p(B_2)} + C \norme{u}_{L^p(B_2)}.
\end{equation}
This concludes the proof by taking $g = Wu$.
\end{proof}

We have the following local $W^{1,p}$ estimate for second order elliptic equations with divergence drift term at scale $\varepsilon$.
\begin{lemma}
\label{lem:gradientdivterm}
For every $p \in (1, \infty)$, $\varepsilon>0$, there exist $c>0$, $C>0$ independent of $\varepsilon$ such that for every $W \in L^{p}(B_{2\varepsilon})$, satisfying
\begin{equation}
\varepsilon + \varepsilon\|W\|_{L^p(B_{2 \varepsilon})}  \leq c,
\end{equation}
and 
for every $u \in H_{\mathrm{loc}}^1(B_2) \cap L^{\infty}(B_2)$ satisfying
\begin{equation}
- \Delta u - \nabla \cdot  (W u) = 0\ \text{in}\ B_{2\varepsilon},
\end{equation}
then we have
\begin{equation}
\norme{\nabla u}_{L^p(B_{ \varepsilon})} \leq C \varepsilon^{-1} \norme{u}_{L^{\infty}(B_{2 \varepsilon})}.
\end{equation}
\end{lemma}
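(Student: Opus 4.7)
The natural approach is a scaling argument that reduces the problem at scale $\varepsilon$ to \Cref{lem:gradientdivtermscale1} at scale $1$. Define
\[
\tilde u(y) = u(\varepsilon y), \qquad \tilde W(y) = \varepsilon\, W(\varepsilon y), \qquad y \in B_2.
\]
A direct chain-rule computation gives $\nabla_y \tilde u(y) = \varepsilon (\nabla_x u)(\varepsilon y)$ and $\Delta_y \tilde u(y) = \varepsilon^2 (\Delta_x u)(\varepsilon y)$, while for each component $\partial_{y_i}(\tilde W_i \tilde u)(y) = \varepsilon^2 \partial_{x_i}(W_i u)(\varepsilon y)$. Multiplying the original equation by $\varepsilon^2$ one therefore checks that
\[
-\Delta_y \tilde u - \nabla_y \cdot (\tilde W\, \tilde u) = 0 \quad \text{in } B_2,
\]
i.e. the equation is preserved under the chosen rescaling.

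Next I would change variables to compare norms. For any $p \in (1,\infty)$, the substitution $x = \varepsilon y$ yields
\[
\|\nabla_y \tilde u\|_{L^p(B_1)} = \varepsilon^{1-2/p}\, \|\nabla_x u\|_{L^p(B_\varepsilon)},
\qquad
\|\tilde W\|_{L^p(B_2)} = \varepsilon^{1-2/p}\, \|W\|_{L^p(B_{2\varepsilon})},
\]
and $\|\tilde u\|_{L^\infty(B_2)} = \|u\|_{L^\infty(B_{2\varepsilon})}$. Applying \Cref{lem:gradientdivtermscale1} to $\tilde u$ on $B_2$ then gives
\[
\varepsilon^{1-2/p} \|\nabla u\|_{L^p(B_\varepsilon)} \leq C \varepsilon^{1-2/p} \|W\|_{L^p(B_{2\varepsilon})} \|u\|_{L^\infty(B_{2\varepsilon})} + C \|u\|_{L^\infty(B_{2\varepsilon})}.
\]
Dividing through by $\varepsilon^{1-2/p}$ produces
\[
\|\nabla u\|_{L^p(B_\varepsilon)} \leq C \|W\|_{L^p(B_{2\varepsilon})} \|u\|_{L^\infty(B_{2\varepsilon})} + C \varepsilon^{2/p - 1} \|u\|_{L^\infty(B_{2\varepsilon})}.
\]

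Finally, I would use the smallness assumption $\varepsilon + \varepsilon \|W\|_{L^p(B_{2\varepsilon})} \leq c$ to absorb both terms. The first term is bounded by $C c\, \varepsilon^{-1} \|u\|_{L^\infty(B_{2\varepsilon})}$ directly. For the second, since $\varepsilon \leq c \leq 1$ and $p \geq 1$ forces $2/p \geq 0$, one has $\varepsilon^{2/p-1} = \varepsilon^{-1} \cdot \varepsilon^{2/p} \leq \varepsilon^{-1}$. Summing gives the announced bound $\|\nabla u\|_{L^p(B_\varepsilon)} \leq C \varepsilon^{-1}\|u\|_{L^\infty(B_{2\varepsilon})}$. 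The only subtle point, which is more bookkeeping than a genuine obstacle, is tracking the factors $\varepsilon^{1-2/p}$ in the various norms and verifying that the equation is preserved by the chosen scaling of $W$ into $\tilde W = \varepsilon W(\varepsilon \cdot)$; once this is done the statement is an immediate consequence of \Cref{lem:gradientdivtermscale1}.
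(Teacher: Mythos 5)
Your proof is correct and takes essentially the same route as the paper: both arguments rest on Meyers' local $W^{1,p}$ estimate for $-\Delta u = \nabla\cdot g$ combined with a scaling argument, plus the smallness hypothesis to absorb the two resulting terms into $C\varepsilon^{-1}\|u\|_{L^\infty(B_{2\varepsilon})}$. The only superficial difference is presentational — you explicitly rescale $u\mapsto\tilde u=u(\varepsilon\cdot)$, $W\mapsto\tilde W=\varepsilon W(\varepsilon\cdot)$ and invoke \Cref{lem:gradientdivtermscale1}, whereas the paper states the $\varepsilon$-scaled Meyers inequality $\varepsilon\|\nabla u\|_{L^p(B_\varepsilon)}\leq C\varepsilon\|g\|_{L^p(B_{2\varepsilon})}+C\|u\|_{L^p(B_{2\varepsilon})}$ directly and plugs in $g=Wu$; the computations are interchangeable.
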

\begin{proof}
By \cite[Theorem 2]{Mey63} and a scaling argument, if
$$ - \Delta u =  \nabla \cdot g\ \text{in}\ B_{2 \varepsilon},$$
then we have the local estimate
\begin{equation}
\varepsilon \norme{\nabla u}_{L^p(B_{\varepsilon})} \leq  C \varepsilon \norme{g}_{L^p(B_{2 \varepsilon})} + C \norme{u}_{L^p(B_{2 \varepsilon})}.
\end{equation}
Taking $g = Wu$ leads to the conclusion.
\end{proof}

We now have the following Harnack's inequalities.
\begin{lemma}
\label{lem:harnackscaleeps}
For every $\varepsilon>0$, there exist $c>0$, $C>0$ independent of $\varepsilon$ such that for every $W_1, W_2, V \in L^{\infty}(B_{2\varepsilon})$ satisfying
\begin{equation}
\varepsilon + \varepsilon\|W_1\|_{\infty} + \varepsilon\|W_2\|_{\infty} + \varepsilon^2 \|V\|_{\infty} \leq c,
\end{equation}
and for every $u \in H_{\mathrm{loc}}^1(B_2) \cap L^{\infty}(B_2)$ satisfying
\begin{equation}
- \Delta u - \nabla \cdot  (W u) = 0\ \text{in}\ B_{2\varepsilon},
\end{equation}
if
\begin{equation}
u > 0\ \text{in}\ B_{2\varepsilon},
\end{equation}
then we have 
\begin{equation}
\sup_{B_{\varepsilon}} u \leq C \inf_{B_{\varepsilon}} u.
\end{equation}
\end{lemma}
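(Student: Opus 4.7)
The plan is to reduce this directly to the classical Harnack inequality for uniformly elliptic equations with bounded lower-order coefficients (e.g., \cite[Theorem 8.20]{GT83}) by a scaling argument. The smallness condition on $\varepsilon + \varepsilon\|W_1\|_\infty + \varepsilon\|W_2\|_\infty + \varepsilon^2\|V\|_\infty$ has been engineered precisely so that after rescaling to unit scale, the new coefficients are uniformly bounded by an absolute constant.

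Concretely, I would set $v(x) = u(\varepsilon x)$ for $x \in B_2$ and check by the chain rule that $v$ is a positive weak solution in $B_2$ of the rescaled equation
\begin{equation*}
-\Delta v - \nabla \cdot (\widetilde{W}_1 v) + \widetilde{W}_2 \cdot \nabla v + \widetilde{V}\, v = 0 \quad \text{in } B_2,
\end{equation*}
where $\widetilde{W}_1(x) = \varepsilon W_1(\varepsilon x)$, $\widetilde{W}_2(x) = \varepsilon W_2(\varepsilon x)$, and $\widetilde{V}(x) = \varepsilon^2 V(\varepsilon x)$. By the smallness hypothesis, the $L^\infty$-norms of the rescaled coefficients are each at most $c$, hence bounded by an absolute constant.

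Since $v>0$ in $B_2$ and solves a uniformly elliptic equation in divergence form on $B_2$ with coefficients bounded uniformly by $c$, the classical Harnack inequality \cite[Theorem 8.20]{GT83} applies and yields
\begin{equation*}
\sup_{B_1} v \leq C_0 \inf_{B_1} v,
\end{equation*}
where $C_0$ depends only on $c$ and the dimension, hence is a universal constant. Undoing the dilation $x \mapsto \varepsilon x$ then gives $\sup_{B_\varepsilon} u \leq C_0 \inf_{B_\varepsilon} u$, which is the desired conclusion with $C = C_0$.

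The argument is essentially bookkeeping, and there is no substantive obstacle: the only care needed is checking the scaling of each coefficient in the equation (first-order terms scale with one factor of $\varepsilon$ because a derivative hits them implicitly, whereas the zeroth-order term scales with $\varepsilon^2$), which matches exactly the weighting in the smallness hypothesis. One should also verify that the smallness assumption implies $2\varepsilon < 2$, so that $B_{2\varepsilon}$ lies inside the ambient domain $B_2$ where $u$ is defined.
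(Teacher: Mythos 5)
Your argument is correct and is the same as the paper's, which simply cites \cite[Theorem 8.20]{GT83}; you have merely spelled out the rescaling $v(x) = u(\varepsilon x)$ that the paper leaves implicit and verified that the coefficients become uniformly bounded, which is exactly how the smallness hypothesis is meant to be used. The only cosmetic point is that the paper's lemma statement writes the equation abbreviated as $-\Delta u - \nabla\cdot(Wu)=0$ while listing $W_1, W_2, V$ in the hypotheses; your version with the full equation $-\Delta v - \nabla\cdot(\widetilde{W}_1 v) + \widetilde{W}_2\cdot\nabla v + \widetilde{V}v = 0$ is the intended one and matches the stated assumptions.
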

\begin{proof}
This is a direct application of the Harnack's inequality of \cite[Theorem 8.20]{GT83}.
\end{proof}

\bibliographystyle{alpha}
\bibliography{Obs2D}

\end{document}